\DeclareMathOperator{\rD}{D}
\DeclareMathOperator{\rp}{p}
\newcommand{\bC}{{\mathbb C}}
\newcommand{\bN}{{\mathbb N}}
\newcommand{\bP}{{\mathbb P}}
\newcommand{\bQ}{{\mathbb Q}}
\newcommand{\bZ}{{\mathbb Z}}
\newcommand{\cF}{{\mathscr F}}
\newcommand{\cL}{{\mathscr L}}
\newcommand{\cQ}{{\mathscr Q}}
\newcommand{\cW}{{\mathscr W}}
\newcommand{\cX}{{\mathscr X}}
\newcommand{\cY}{{\mathscr Y}}
\newcommand{\dO}{{\mathcal O}}
\newcommand{\dP}{{\mathcal P}}
\newcommand{\fM}{{\mathfrak M}}
\newcommand{\eps}{\varepsilon}
\renewcommand{\phi}{\varphi}
\DeclareMathOperator{\CH}{CH}
\DeclareMathOperator{\DCH}{DCH}
\DeclareMathOperator{\Xh}{\hat{X}}
\DeclareMathOperator{\iso}{\cong}
\DeclareMathOperator{\dual}{^{\vee}}					
\DeclareMathOperator{\pr}{pr}
\DeclareMathOperator{\inj}{\hookrightarrow}
\DeclareMathOperator{\surj}{\twoheadrightarrow}
\DeclareMathOperator{\PProj}{\bP roj}
\DeclareMathOperator{\Sym}{Sym}
\DeclareMathOperator{\too}{\longrightarrow}
\DeclareMathOperator{\rank}{rk}
\DeclareMathOperator{\ch}{ch}
\DeclareMathOperator{\td}{td}
\DeclareMathOperator{\Pic}{Pic} 
\DeclareMathOperator{\id}{id}
\DeclareMathOperator{\del}{\partial}
\DeclareMathOperator{\Def}{Def}
\DeclareMathOperator{\Hilb}{Hilb}
\DeclareMathOperator{\an}{an}
\DeclareMathOperator{\Gammabar}{\overline{\Gamma}}
\newcommand\restr[2]{{
  \left.\kern-\nulldelimiterspace 
  #1 
  \vphantom{\big|} 
  \right|_{#2} 
  }}
\newif\ifmyversion
\newenvironment{note}{\color{gray} \ \\(Anmerkung nur für mich: }{) } 
\newcommand{\TODO}[1]{}
\newcommand{\Martin}[1]{}
\newcommand{\I}{{\rm{(I)}}}
\newcommand{\II}{{\rm (II)}}
\newcommand{\III}{{\rm{(III)}}}
\newcommand{\stern}{{\rm{(*)}}}
\newcommand{\sternn}{{\rm{(**)}}}
\newcommand{\sternnn}{{\mathrm{(*\!*\!*)}}}
\theoremstyle{plain}
\newtheorem{proposition}{Proposition}[section]
\newtheorem{lemma}[proposition]{Lemma}
\newtheorem{claim}[proposition]{Claim}
\newtheorem{corollary}[proposition]{Corollary}
\newtheorem{conjecture}[proposition]{Conjecture}
\newtheorem{theorem}[proposition]{Theorem}
\theoremstyle{definition}
\newtheorem{definition}[proposition]{Definition}
\newtheorem{remark}[proposition]{Remark}
\theoremstyle{remark}
\newtheoremstyle{name}
   {}{}{\itshape}{}{\bfseries }{}{ }{\thmname{#3}.}
\theoremstyle{name}
\newtheorem{name}{bla}
\numberwithin{equation}{section}					
\begin{document}
\title[On the Chow ring of birational irreducible symplectic varieties]{On the Chow ring of birational irreducible symplectic
  varieties}
\author[U. Rie{\ss}]{Ulrike Rie\ss}
\address{Mathematical Institute, Endenicher Allee 60, 53115 Bonn, Germany}
\maketitle
\let\thefootnote\relax\footnotetext{Funded by the SFB/TR 45 `Periods,
moduli spaces and arithmetic of algebraic varieties' of the DFG
(German Research Foundation)}

\begin{abstract} 
  We show that the graded Chow rings of two birational irreducible symplectic
  varieties are isomorphic. This lifts a result known for the cohomology algebras to the level of Chow
  rings, despite the non-injectivity the cycle class map. In the special case of general Mukai
  flops, we present an alternative approach based on explicit calculations.
\end{abstract}

\section{Introduction}
An {\it irreducible symplectic variety} or {\it algebraic hyperkähler manifold} is a simply
connected, nonsingular,  complex projective variety with a nowhere degenerate two-form $\sigma$
generating $H^0(X,\Omega_X^2)$.
Two important series of examples are provided by Hilbert schemes of points Hilb$^n(S)$ on a K3 surface $S$
and generalized Kummer varieties $K_n(A)$ associated to an abelian surface $A$.

The main result of this article is the following (see Theorem \ref{thm:invertibility}):
\begin{name}[Theorem]
  Let $X$ and $X'$ be birational irreducible symplectic varieties. Then there exists a correspondence $[Z]_*:\CH(X) \overset{\iso}{\too}
  \CH(X')$ which is an isomorphism of graded rings.
\end{name}

The most important aspect of this theorem 
is the multiplicativity of $[Z]_*$. The fact
that the Chow groups of $X$ and $X'$ are isomorphic as additive groups (without regard to the multiplicative structure) was already known as a
consequence of \cite[Theorem 3.2]{FuWang08}.

Consider deformations $\cX$ and $\cX'$ of $X$ and $X'$ which are
isomorphic away from the special fibre
(see \cite[Theorem 4.6]{Huybrechts:HK:basic-results})
 and let $Z$ be the limit of the graphs of isomorphisms $\cX_t
\iso \cX'_t$.  Then $[Z]$ is known to yield an isomorphism $[Z]_*^H:H^*(X,\bZ)
\overset{\cong}{\too}H^*(X',\bZ)$ of graded rings (cf. Section \ref{ssec:cohomology}).
Using the cycle class map, one could try to deduce Theorem \ref{thm:invertibility} from this statement.
However, the kernel of the cycle class map  $\CH(X)\to H^*(X,\bZ)$ is very big. Already for a K3 surface
it is infinite dimensional, due to the classical result of Mumford \cite{Mumford69}.

Instead, we use specialization for Chow rings in families to prove multiplicativity and invertibility of
$[Z]_*$ directly (cf. Section \ref{ssec:proofofthm}).

As an application of Theorem \ref{thm:invertibility}, we study questions related to the Bloch--Beilinson
conjecture, which was in fact our original motivation.  We first observe that for irreducible symplectic
varieties the termination of the conjectural Bloch--Beilinson filtration is invariant under birational
correspondences.  

Furthermore we study conjectures of Beauville and Voisin on the Chow rings of irreducible
symplectic varieties. In \cite{Beauville} Beauville considers the subalgebra $\DCH(X)\subseteq
\CH_\bQ(X)$ generated by $\CH^1_\bQ(X)$, and predicts that the restriction of the cycle class map $\restr{c_X}{\DCH(X)}:\DCH(X)\inj
H^2(X,\bQ)$ is injective. Voisin
extends this conjecture to the bigger subalgebra which also includes the Chern classes $c_i(T_X)$ (see
\cite{Voisin-08}).
Using the above theorem, we show (see Theorem \ref{thm:generalize Beauville}):
\begin{name}[Theorem] 
  The conjectures of Beauville and Voisin are both invariant under birational
  correspondences.
\end{name}

The most fundamental examples of birational correspondences between irreducible symplectic varieties
$X$ and $X'$ are provided by general Mukai flops. In this case one can fix families $\cX$ and $\cX'$ as
above and explicitly determine $Z$. Thus computing the action of $[Z]_*$ provides an
alternative approach to Theorem \ref{thm:invertibility}. In order to demonstrate that even in this fundamental case the result is
non-trivial, we will show the multiplicativity of $[Z]_*$ in this case by explicit computations. This
will take up all of Sections \ref{sec:general facts} and \ref{sec:multiplicativity}.

Still in the case of general Mukai flops, the last section relates Theorem \ref{thm:invertibility} to a
known result on derived categories and Grothendieck groups.  Namikawa proved that the Fourier--Mukai
transform $\Phi_{\dO_Z}:\rD^b(X) \to \rD^b(X')$ is an equivalence of categories (cf. \cite[Theorem
5.1]{Namikawa2003}). We show that the induced map $\Phi^{\CH}_{v(\dO_Z)}:\CH_\bQ(X)\to \CH_\bQ(X')$
coincides with $[Z]_*$ 
and thus deduce that
$\Phi^K_{[\dO_Z]}:K(X)\otimes \bQ\to K(X')\otimes \bQ$ is an isomorphism of graded rings. This
multiplicativity is not reflected on the level of derived categories, since $\Phi_{\dO_Z}$ is not
compatible with the derived tensor product (cf. \cite{Balmer}).

Note that in \cite[Theorem 4.6]{Huybrechts:HK:basic-results} Huybrechts showed the existence of 
deforming families $\cX$ and $\cX'$ as non-projective complex manifolds. 
In order to use classical intersection theory, we show in Section \ref{sec:preparations} that such families indeed exist as algebraic spaces.

\bigskip
\noindent {\bf Acknowledgements.}
I wish to thank my advisor Daniel Huybrechts for his support. Moreover, I thank Baohua Fu and
Roland Abuaf for their comments on the first version of this article. Finally, I would like to thank the
referee for his suggestions. Sections \ref{sec:general facts}
and \ref{sec:multiplicativity} are part of the author's diploma thesis.

\section{Preparations} \label{sec:preparations}
Throughout the article algebraic spaces will be separated algebraic spaces of finite type over
$\bC$. For the definition and properties of algebraic spaces, we refer to \cite{Knutson}. 
 We will denote by $\CH(X)$ the Chow ring (with integral coefficients) of a
nonsingular integral algebraic space $X$, whereas the Chow ring with
coefficients in $\bQ$ will be denoted by $\CH_\bQ(X)$.  
By the term ``complex variety'' we refer to a separated
integral scheme of finite type over $\bC$.

In this section we will lay the foundations for the proof of the main theorem. We show that deforming
families as in \cite[Theorem 4.6]{Huybrechts:HK:basic-results} exist in the category of algebraic spaces, and we
briefly recall intersection theory for algebraic spaces, including specialization maps.

\subsection{Existence of $\cX$ and $\cX'$ as algebraic spaces}\label{ssec:algexistence}

The following proposition is an algebraic version of \cite[Theorem 4.6]{Huybrechts:HK:basic-results}. 
 It is essential for many of the proofs.
\begin{proposition}\label{prop:families}
  Let $X$ and $X'$ be birational irreducible symplectic varieties. Then there exist families of smooth
  integral algebraic spaces $\cX$ and $\cX'$ over $T$, smooth quasi-projective one-dimensional
  complex variety, and a closed point $0\in T$ such that
  \begin{compactenum}
  \item $\cX_0=X$ and $\cX'_0=X'$, and
  \item there is an isomorphism $\Psi:\cX_{T\setminus \{0\}} \iso \cX'_{T\setminus \{0\}} $ over $T$.
  \end{compactenum}
\end{proposition}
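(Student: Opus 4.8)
The plan is to produce the families by deformation theory, arranging that the \emph{fibres} are projective, and then to algebraize the resulting analytic family over a curve. Write $f\colon X\dashrightarrow X'$ for the given birational map. As $X$ and $X'$ are irreducible symplectic, $f$ is an isomorphism in codimension one and induces a Hodge isometry $\phi\colon H^2(X,\bZ)\iso H^2(X',\bZ)$ with $\phi(\NS(X))=\NS(X')$. Fixing an ample class $a'\in\NS(X')$, I would set $h:=\phi^{-1}(a')\in\NS(X)$; this class lies in the movable cone of $X$ but in general is \emph{not} ample on $X$ itself. By unobstructedness of deformations and the local Torelli theorem, the Kuranishi families of $X$ and $X'$ are smooth and $\phi$ identifies $\Def(X)$ with $\Def(X')$; inside $\Def(X)$ the locus where $h$ stays of type $(1,1)$ is a smooth one-dimensional germ $S$ through the central point $0$. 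This is the deformation-theoretic content underlying \cite[Theorem 4.6]{Huybrechts:HK:basic-results}.

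The key point along $S$ is that for very general $t$ one has $\NS(\cX_t)=\bZ\,h$ with positive Beauville--Bogomolov square; since $\NS(\cX_t)$ then contains no class of negative square, there are no wall divisors, one of $\pm h$ is ample, and $\cX_t$ is projective. The same holds for the companion deformation $\cX'\to S$ of $X'$ carrying the transported class $\phi(h)$. As $\cX_t$ and $\cX'_t$ are birational irreducible symplectic manifolds with cyclic $\NS$ on which the (up to sign) unique ample classes correspond under $\phi$, the induced birational map $\cX_t\dashrightarrow\cX'_t$ preserves ampleness and is therefore an isomorphism. Note, however, that $h$ degenerates to a non-ample class on the central fibre $X$, so there is no relative polarization through $0$: the family $\cX\to S$ is smooth and proper with projective fibres, yet its total space is in general non-projective. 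This is precisely why one cannot expect $\cX$ to be a scheme and must pass to algebraic spaces.

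It remains to algebraize. The analytic data consist of two smooth proper families over the one-dimensional germ $S$ together with an isomorphism of their restrictions over $S\setminus\{0\}$; this is an analytic solution of a deformation functor that is locally of finite presentation and, because $X$ and $X'$ are projective, has effective formal deformations. I would apply Artin's approximation theorem to replace $S$ by a smooth affine algebraic curve $T$ with the same henselization at $0$, obtaining algebraic families $\cX,\cX'\to T$ of smooth algebraic spaces (in the sense of \cite{Knutson}) agreeing with the analytic solution to high order, so that $\cX_0\cong X$ and $\cX'_0\cong X'$. Smoothness of the fibres and of $T$ together with connectedness of the central fibres forces $\cX$ and $\cX'$ to be smooth integral algebraic spaces, and $T$ is quasi-projective as an affine curve; this yields property (a).

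For property (b), the approximation produces $\Psi$ only over a punctured henselian neighbourhood, so I would spread it out. Since ampleness is open in families, after shrinking $T$ I may assume that $h$ is relatively ample over $T\setminus\{0\}$, and likewise $\phi(h)$ on $\cX'$; both families are then relatively projective there with corresponding polarizations. A birational map of smooth proper families over a curve that is an isomorphism in codimension one and matches the relative polarizations is an isomorphism over the whole base, giving the required $\Psi\colon\cX_{T\setminus\{0\}}\iso\cX'_{T\setminus\{0\}}$ over $T$. The main obstacle is the algebraization step: one must phrase ``two deformations together with an isomorphism away from the special point'' as a single functor to which Artin approximation applies, verify effectivity of its formal deformations and the existence of an obstruction theory, and control that the resulting algebraic total spaces---necessarily algebraic spaces rather than schemes, since no relative polarization exists through the special fibre---are integral and defined over a genuine algebraic curve.
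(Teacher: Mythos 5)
Your analytic construction runs parallel to the paper's: deform along the locus where a class corresponding to an ample class on the other variety stays of type $(1,1)$, use that the very general fibre has Picard rank one and positive Beauville--Bogomolov square to get projectivity, and conclude that the birational map between fibres preserves ampleness and is therefore an isomorphism. One step there is asserted rather than proved: that the fibres $\cX_t$ and $\cX'_t$ over corresponding points of the two Kuranishi families are birational at all. This does not follow formally from the existence of $X\dashrightarrow X'$; it is exactly the content of Claim \ref{clm:birationality} in the paper, proved by an openness/density argument in the moduli space of marked hyperk\"ahler manifolds (the locus where the two fibres are isomorphic as marked manifolds is open, non-empty by \cite[Theorem 4.6']{Huybrechts:HK:basic-results}, and contains the connected dense locus of Picard rank zero). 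You may of course cite Huybrechts for this, but it has to be addressed.

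The genuine gap is the algebraization. You propose to encode ``two smooth proper analytic families over a germ together with an isomorphism over the punctured germ'' as a deformation functor and apply Artin approximation, claiming effectivity of formal deformations ``because $X$ and $X'$ are projective''. Effectivity in the sense of Grothendieck's existence theorem requires a \emph{relative} ample line bundle on the formal family, and by your own construction exactly one of the two families carries no polarization through $0$ (the class $h=\phi^{-1}(a')$ is movable but not ample on the central fibre). For unpolarized smooth proper formal families there is no general effectivity statement, so the functor you describe is not known to admit algebraic solutions; and even granting that, approximation would only produce data over the henselization at $0$, leaving a further spreading-out problem to reach a genuine quasi-projective curve. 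The paper sidesteps all of this: it fixes a very ample $L$ on $X$ itself, so that $\cX\to T$ is literally the pullback of the universal family over a quasi-projective curve $T$ inside the relevant component of the Hilbert scheme (hence a quasi-projective scheme), and only $\cX'$ must be produced abstractly --- it is glued analytically into $\cX\setminus X$, a compactification $\overline{\cX'}$ is observed to be Moishezon because it is bimeromorphic to the projective $\overline{\cX}$, and Artin's theorem that Moishezon spaces are algebraic spaces \cite[Theorem 7.3]{Artin:blow-down} performs the algebraization. To repair your argument you should replace the Artin-approximation step by this Hilbert-scheme-plus-Moishezon route, or by some equally concrete algebraization of the one non-polarized family.
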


For the proof we need to work with not necessarily projective hyperkähler manifolds. 
By {\it hyperkähler manifold} we refer to a simply
connected, compact Kähler manifold $X$, such that  $H^0(X,\Omega_X^2)$ is generated by a nowhere
degenerate two-form. 
The definition of an irreducible symplectic variety is recovered from this by additionally requiring projectivity.

The second integral cohomology $H^2 (X,\bZ)$ of a hyperkähler manifold $X$ is endowed with an integral
quadratic form, called the Beauville--Bogomolov form, which we denote by $q$. A detailed overview on
hyperkähler manifolds can be found in \cite[Part III]{Gross-Huybrechts-Joyce} and \cite{Huybrechts:HK:basic-results}.

\begin{proof}[Proof of Proposition \ref{prop:families}]
In order to increase the readability of the proof, the following diagram contains most of the maps that
will occur: 
 $$
  \begin{tikzcd}[column sep=small, row sep=tiny]
       &\cX_{\Hilb} \ar{dd} \ar[hookleftarrow]{rr} &&\cX_{H^{\an}} \ar{dd}\ar[two heads]{rr} 
       &&\cX_{D_L}\ar{dd} \ar[hook]{rr} &&\cX_{D^{\an}} \ar{dd}
       &&\cX'_{D'^{\an}}\ar{dd} \ar[hookleftarrow]{rr} &&\cX'_{D_{L'}}\ar{dd}\\
       \cX \ar{dd} \ar{ur}\ar[hookleftarrow, crossing over]{rr}&& \cX_{\an} \ar{ur} &&&&&&&&&\\
       &\Hilb^X_{\bP^n} \ar[hookleftarrow]{rr}&& H^{\an}  \ar[two heads]{rr}{\eta} 
       && D_L  \ar{ddrrr}[swap]{\dP}\ar[hook]{rr} &&D^{\an}\ar{ddr}{\dP}
       && D'^{\an} \ar{ddl}[swap]{\dP'}\ar[hookleftarrow]{rr}&&D_{L'} \ar{ddlll}{\dP'} \ .\\
       T\ar{ur}{f}\ar[hookleftarrow]{rr}&&T^{\an}\ar{ur}[swap]{f} \ar[leftarrow,crossing over]{uu}&&&&&&&&&\\
       &&&&&&&&\cQ&&&
  \end{tikzcd}
  $$

  Let $X$ and $X'$ be birational irreducible symplectic varieties and fix a very ample $L\in \Pic (X)$. 
  Consider the induced embedding $X\inj \bP^N$. Let
  $\Hilb^X_{\bP^n}$ be the irreducible component of the Hilbert scheme containing the
  point $[X]$, and $\cX_{\Hilb}$ be the corresponding universal family. Denote the pullback of $\dO(1)$
  by $\cL \in \Pic(\cX_{\Hilb})$.

  Since the Kähler manifold $X$ deforms unobstructed (see e.g. \cite{Kawamata92} and \cite{Ran92}), there
  exists a local
  deformation space $D^{\an}:=\Def(X)$ of $X$ together with a universal family
  $\cX_{D^{\an}}$. After shrinking, we may assume that $D^{\an}$ is contractible.
  Let $D_L:=\Def(X,L)\subseteq D^{\an}$ be the subset
  parametrizing deformations of the pair $(X,L)$.
  The
  choice of $L$ implies that (up to shrinking $D_L$) there exists an open subset $H^{\an}
  \subseteq \Hilb^X_{\bP^n}$ such that the restriction $\cX_{H^{\an}}$ of the universal family
  to $H^{\an}$ induces a proper surjective morphism $\eta:H^{\an} \surj D_L$. 
 
  For general $t\in D_L$ the Picard rank is $\rho(\cX_t)=1$. Thus, the same is true for general $t\in H^{\an}$, and
  general $t \in \Hilb^X_{\bP^n}$. Therefore, there is a smooth quasi-projective curve $T$ with a map $f:T\to
  \Hilb^X_{\bP^n}$ such that some point $0\in T$ is mapped to $[X]$, and $\rho(\cX_t)=1$ for a general
  element $t\in T$. Let $\cX$ be the pullback of $\cX_{\Hilb}$ to $T$. By shrinking $T$ we may
  assume that $\cX$ is smooth.
  Define further $T^{\an}:=f^{-1}(H^{\an})$, and $\cX_{\an}$ as the pullback of $\cX$ to
  $T^{\an}$.
 
  Fix a birational map $\phi:X\dashrightarrow X'$. This is an isomorphism away from a set of codimension
  at least two
, and therefore induces an isomorphism $\phi^*:H^2(X',\bZ)\to H^2(X,\bZ)$
  (see e.g. \cite[Lemma 2.6]{Huybrechts:HK:basic-results}). Fix a lattice $\Lambda$ which is isomorphic
  to $H^2(X,\bZ)$, together with a marking $g$ of
  $X$ (i.e. an isomorphism $g:H^2(X,\bZ)\to \Lambda$). This induces a marking $g':= g\circ\phi^*$ of $X'$. 

  Similar as above, $X'$ deforms to $\cX'_{D'^{\an}}\to D'^{\an}:=\Def(X')$, and if $L'\in \Pic(X')$ corresponds
  to $L$ (via $\phi^*$), the pair $(X',L')$ deforms to a subset $D_{L'}:=\Def(X',L')$.

    Consider the period map 
  $\dP: D^{\an}\to\cQ = \{x | \; q(x)=0 , q(x + \bar{x})>0\}\subset \bP(\Lambda \otimes \bC)$, sending
  $t\in D^{\an}$ to $\bP(g_t(H^{2,0}(\cX_t)))$, where $g_t$ is the marking induced by parallel
  transport
  . Analogously, define
  $\dP':D'^{\an} \to \cQ$.  The Local Torelli Theorem states that these maps are local isomorphisms (see
  \cite[Theorem 5]{Beauville1983}). Therefore, (up
  to further shrinking $D^{\an}$ and $D'^{\an}$)
  they induce an isomorphism $\dP'^{-1}\circ \dP:D^{\an} \to D'^{\an}$, which identifies $D_L$ with $D_{L'}$.

  \begin{claim}\label{clm:birationality}
   For any $t\in
    \dP(D^{\an})\subseteq \cQ$, the manifolds $\cX_t:= (\cX_{D^{\an}})_{\dP^{-1}(t)}$ and
    $\cX'_{t}:=(\cX'_{D'^{\an}})_{\dP'^{-1}(t)}$ are birational.
  \end{claim}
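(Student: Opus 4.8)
The plan is to transport both families to a common base by means of the period maps and then to propagate the birationality of the central fibres, promoting the Hodge-theoretic coincidence of periods to an honest bimeromorphic correspondence on every fibre. After the shrinking already performed, $\dP$ and $\dP'$ are isomorphisms onto one and the same open subset $V := \dP(D^{\an}) = \dP'(D'^{\an}) \subseteq \cQ$; via these I regard $\cX_{D^{\an}}$ and $\cX'_{D'^{\an}}$ as two families over $V$, so that for $t \in V$ the fibres $\cX_t$ and $\cX'_t$ are exactly the objects to be compared. Since $V$ is contractible, the local systems $R^2\pi_*\bZ$ are trivial and the markings $g$, $g'=g\circ\phi^*$ extend by parallel transport to markings $g_t$, $g'_t$ of all fibres. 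By construction of $g'$ the two families have the same period at every $t$, so composing the markings gives for each $t\in V$ a Hodge isometry $f_t := g_t^{-1}\circ g'_t : H^2(\cX'_t,\bZ)\xrightarrow{\ \sim\ } H^2(\cX_t,\bZ)$, specialising to $\phi^*$ at the base point and equal to the parallel transport of $\phi^*$ along $V$.

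I would then upgrade this family of Hodge isometries to birational correspondences. For very general $t\in V$ the period is orthogonal to no nonzero integral class, so $\rho(\cX_t)=\rho(\cX'_t)=0$; then the Kähler cone is the full positive cone and $f_t$, after fixing its sign, is induced by an isomorphism $\cX'_t\iso\cX_t$, whence $\cX_t$ and $\cX'_t$ are birational for very general $t$. To reach every $t$ I would take the closure $Z_{t_0}:=\overline{\Gamma_\phi}\subseteq \cX_{t_0}\times\cX'_{t_0}$ of the graph of $\phi$, view it inside $\cX_{D^{\an}}\times_V\cX'_{D^{\an}}$, and spread it out over $V$ as the component of the relative Chow variety through $[Z_{t_0}]$; the fibre $Z_t$ over each $t$ is then a cycle of the correct dimension on $\cX_t\times\cX'_t$, and I would argue that it carries a component projecting birationally onto both factors, exhibiting the required birational map.

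The main obstacle is exactly this propagation. Bimeromorphic maps do not vary in families, so $\phi$ cannot simply be transported; one must control the limits of the graphs and exclude that $Z_t$ degenerates into a cycle with excess-dimensional fibres or a spurious divisorial component. The structural features I would exploit are that the indeterminacy locus of $\phi$ has codimension at least two, so the correspondence is an isomorphism in codimension one and cannot absorb a divisor, and that $K_{\cX_t}$ is trivial, so no exceptional divisor can be created. Equivalently, one may argue purely Hodge-theoretically: the two families define two inseparable sheets of the (non-Hausdorff) moduli space of marked hyperkähler manifolds over $V$, inseparable at $t_0$ because $\cX_{t_0}$ and $\cX'_{t_0}$ are birational, and inseparable marked pairs are bimeromorphic by the analytic theory underlying \cite[Theorem 4.6]{Huybrechts:HK:basic-results}. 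Either way the delicate point is the passage from coincidence of periods to an actual bimeromorphic identification of the fibres, which is where the hyperkähler Torelli theory enters.
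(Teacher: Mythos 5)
Your setup (identifying the two bases via $\dP$ and $\dP'$, transporting the markings, and observing that the fibres over each $t$ have the same period) agrees with the paper's. But both of the two routes you then sketch for promoting equality of periods to birationality of fibres contain the actual content of the claim, and neither is carried out. First, your assertion that for very general $t$ with $\rho(\cX_t)=0$ the Hodge isometry $f_t=g_t^{-1}\circ g'_t$ is ``induced by an isomorphism'' is a Global Torelli statement; it does not follow from $\rho=0$ alone and is not among the tools the paper allows itself (it works only with Huybrechts' local Torelli, the non-separatedness criterion, and the surjectivity/density arguments of \cite[Theorems 4.3, 4.6', 5.1]{Huybrechts:HK:basic-results}). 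Second, the propagation from very general $t$ to all $t$ by spreading out $\overline{\Gamma_\phi}$ in a relative Chow variety and ``arguing that the limit cycle carries a component projecting birationally onto both factors'' is exactly the hard step you yourself flag as the main obstacle; controlling such limit cycles is the content of Huybrechts' theorem that non-separated marked pairs are bimeromorphic, and you would be re-proving it without doing so. Likewise, your closing remark that the two sheets are ``inseparable at $t_0$'' does not yield inseparability at an arbitrary $t$ without an argument --- that implication is precisely what has to be established.

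The paper's proof fills this gap by a topological density argument that avoids any Torelli-type input beyond the local one. It introduces $U\subseteq\dP(D^{\an})$, the locus where there is an honest isomorphism $\phi_u:\cX_u\iso\cX'_u$ with $\phi_u^*=g_u^{-1}\circ g'_u$. Then: $U\neq\emptyset$ because $(X,g)$ and $(X',g')$ are non-separated in $\fM$ (\cite[Theorem 4.6']{Huybrechts:HK:basic-results}), so isomorphic fibres occur arbitrarily close to $0$; $U$ is open by the Local Torelli Theorem; and $U$ is dense because the locus $W=\{\rho(\cX_w)=0\}$ is a connected dense complement of countably many real-codimension-two hypersurfaces which meets $U$ but not $\del U$ (the boundary argument of \cite[Theorem 5.1]{Huybrechts:HK:basic-results}), forcing $W\subseteq U$. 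Density of $U$ makes $(\cX_t,g_t)$ and $(\cX'_t,g'_t)$ non-separated points of $\fM$ for \emph{every} $t$, and \cite[Theorem 4.3]{Huybrechts:HK:basic-results} then gives the birationality. You would need to supply an argument of this kind (or invoke Verbitsky's Global Torelli Theorem explicitly, which changes the logical basis of the proof) for your proposal to close.
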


\begin{proof}
  For $t \in \dP(D^{\an})$, let again  
  $g_t:H^2(\cX_t,\bZ)\to \Lambda$ and $g'_t:H^2(\cX'_t, \bZ)\to \Lambda$ be the
    markings induced from $g$ and $g'$ by parallel transport.
  Let $U\subseteq \dP(D^{\an})$ be the subset of elements $u\in \dP(D^{\an})$, for which there exists an
  isomorphism $\phi_u:\cX_u\iso \cX'_u$ such that $\phi_u^*=g_u^{-1} \circ g'_u$, i.e.
 for $u\in U$, the pairs $(\cX_u,g_u)$ and $(\cX'_u,g'_u)$
  correspond to the same point in the moduli space $\fM$ of marked hyperkähler manifolds. 

  In the following, we will show that $U$ is a dense open subset in
  $\dP(D^{\an})$. Therefore, for all $t\in \dP(D^{\an})$ the marked hyperkähler manifolds $(\cX_t,g_t)$ and
  $(\cX'_t,g'_t)$ correspond to non-separated points in $\fM$ and are thus birational (see
  \cite[Theorem 4.3]{Huybrechts:HK:basic-results}).

  First observe that $U$ is non-empty, since by \cite[Theorem 4.6']{Huybrechts:HK:basic-results} $(X,g)$ and $(X',g')$ are
  non-separated points in $\fM$.
  By the Local Torelli Theorem, $U$ is open. 

  In order to see that $U\subseteq \dP(D^{\an})$ is dense, consider the set $W:=\{w\in \dP(D^{\an})\,|\,
  \rho(\cX_w)=0 \}\subseteq \dP(D^{\an})$. This is a complement of a union of countably many hypersurfaces. Since
  hypersurfaces are of real codimension two, the set $W$ is still
  connected. 
  Furthermore $W\subseteq \dP(D^{\an})$ is dense, and therefore $W\cap U \neq \emptyset$.
  The same argument as in the proof of \cite[Theorem 5.1]{Huybrechts:HK:basic-results} shows that
  $W\cap \del U= \emptyset$, where $\del U:=\overline{U}\setminus U$ denotes the border of
  $U$. Therefore $U$ contains $W$, and in particular it is dense in $\dP(D^{\an})$. This proves Claim
  \ref{clm:birationality}.
\end{proof}

  Define $\cX'_{\an}$ as the pullback of $\cX'_{D'^{\an}}$ to $T^{\an}$
  along the composition $\dP'^{-1}\circ \dP\circ \eta \circ f$. This comes with $\cL' \in
  \Pic(\cX'_{\an})$. Denote the fibre over an element $t\in T^{\an}$ by $\cX'_t:=(\cX'_{\an})_t$, and
  let $\cL'_t:=\restr{\cL'}{\cX'_t}$.
  For general $t\in T^{\an}$, both $\cL_t$ and $\cL'_t$ have
  global sections by semicontinuity, 
  $\rho({\cX}_t)=\rho(\cX'_t)=1$ (this uses the choice of $T$), and $0<q(\cL_t)=q(L)=q(L')=q(\cL'_t)$ (see \cite[Corollary
  2.7]{Huybrechts2003}).
  
  Claim \ref{clm:birationality} implies the existence of a birational map $\phi:\cX_t\dashrightarrow
  \cX'_t$. The pullback $\phi^*(\cL'_t)$ has nontrivial global sections and satisfies $
  q(\phi^*(\cL'_t))=q(\cL'_t)=q(\cL_t)$. Therefore we can conclude that $\phi^*(\cL'_t)=\cL_t$.
  The projectivity criterion for hyperkähler manifolds \cite[Theorem 2]{Huybrechts:HK:basic-results:erratum} implies that
  $\cL_t=\phi^*(\cL'_t)$ and $\cL'_t$ are ample line bundles. Therefore, $\phi$ extends to an isomorphism
  $\phi:\cX_t\iso \cX'_t$.

  The set $V\subseteq T^{\an}$, where $\cL_v$ and $\cL'_v$ are ample and define an isomorphism 
  $\cX_v\iso \cX'_v$ is open (see \cite[Theorem 1.2.17]{LazarsfeldI}),
 and by shrinking $T^{\an}$ we may assume that $V=T^{\an}\setminus
  \{0\}$. Then
  $\restr{\cX'_{\an}}{T^{\an}\setminus \{0\}} \iso  \restr{\cX_{\an}}{T^{\an}\setminus \{0\}}$.
  Finally, define a complex manifold $\cX'$ by gluing $\cX'_{\an}$ into $\cX \setminus X$ along this isomorphism.
  Clearly, these $\cX$ and $\cX'$ satisfy the conditions of Proposition
\ref{prop:families} and it suffices to show, that $\cX'$ is an algebraic space.

Since $\cX$ is quasi-projective,
we can consider its closure $\overline{\cX}$ with respect to an arbitrary embedding into a projective
space.  Then define a complex space $\overline{\cX'}$ by gluing $\overline\cX \setminus X$ with $\cX'$
along $\cX\setminus X$. With this construction $\overline{\cX'}$ is a Moishezon space, since
$\overline{\cX'}$ is birational to $\overline{\cX}$, which is projective.
Therefore, \cite[Theorem 7.3]{Artin:blow-down} implies that $\overline{\cX'}$ corresponds to an algebraic
space. Conclude the proof of Proposition \ref{prop:families} by observing that the (Zariski-)open subset $\cX'$ consequently also exists as
algebraic space.
\end{proof}

\subsection{Intersection theory for algebraic spaces}\label{ssec:inttheo-sp}
As Edidin and Graham pointed out in \cite[Section 6.1]{Edidin-Graham}, the  whole intersection theory as
presented in \cite[Chapters 1-6]{Fulton} still works in the category of algebraic spaces. For an
algebraic space, codim-$k$-cycles are defined as formal sums with $\bZ$-coefficients of integral closed
subspaces of codimension $k$ in $X$. 
Rational equivalence  is generated by div$(\phi)$ for rational functions on codimension $k-1$ subspaces
$W\subseteq X$. Here, for $Y\subseteq W$ of codimension one and $\phi\in K(W)$, the multiplicity mult$_Y (\phi)$ can be defined by pulling  back to a representable \'etale covering
(i.e. if $f: U\to X$ is a representable \'etale covering, then mult$_Y (\phi) := {\rm mult}
_{f^{-1}(Y)}(f^* \phi)$). With these definitions the results of \cite[Chapters
1-6]{Fulton} hold in the category of algebraic spaces. In particular, Chow rings of algebraic spaces have
the known functorial properties (proper push-forward, Gysin morphism, pull-backs), and consequently 
every smooth integral algebraic space admits an intersection pairing.

Analogously, specialization still works in the category of algebraic spaces:

\subsubsection{The specialization map} \label{sec:specialization} 
Let us fix the notation for the specialization maps.
Let $T$ be a smooth 
one-dimensional integral (separated) algebraic space and $\eta$ be its generic
point. 
 Let $\pi:\cY \to T$ be a smooth morphism of integral algebraic spaces. 
Fix a $\bC$-rational point $t\in T$. 
  There is a commutative
  triangle:
  $$
  \begin{tikzcd}[column sep=tiny]
       &\CH(\cY) \ar{dl}[swap]{r_\eta}\ar{dr}{s_t} & \\
    \CH(\cY_\eta)\ar{rr}{\sigma} & & \CH(\cY_t) \,,
  \end{tikzcd}
  $$
  where $s_t$ denotes the restriction to $\cY_t$ (i.e.\,the pull-back to the special fibre, which coincides with the specialization map of
  \cite[Chapter 10.1]{Fulton}). On the level of cycles, $r_\eta$ is defined as the restriction to the
  generic fibre and $\sigma$ as the composition of taking the closure in $\cY$ and restriction to
  $\cY_t$. The map $\sigma$ is called {\it specialization map}. Commutativity of the triangle and
  compatibility of $r_\eta$ and $\sigma$ with rational equivalence may be checked explicitly.

  All three maps are compatible with the intersection product, pull-back, proper push-forward, and taking Chern classes.

  By slight abuse of notation, we will not keep the family $\cY$ in the notation, but use the symbols
  $r_\eta$, $s_t$, and $\sigma$ for various families.

\section{Main theorem} \label{sec:maintheorem}
In this section we state and prove Theorem \ref{thm:invertibility}, which is the main result of this
article.

\subsection{Notation and formulation of the theorem}\label{ssec:notation+theorem}

Fix the following notation for the rest of the article:
\begin{name}[Notation]{ \rm
For birational irreducible symplectic varieties $X$ and $X'$ fix families $\cX\to T$ and $\cX'\to T$ as in
Proposition \ref{prop:families}. 
  Denote the generic point of $T$ by $\eta$. The generic fibres of $\cX$ and $\cX'$ are consequently
  denoted by $\cX_\eta$ and $\cX'_\eta$ respectively. Then $\Psi$ restricts to an isomorphism $\psi:
  \cX_\eta \to \cX_\eta'$. Let $\Gamma$ be the graph of $\psi$, and $\Gammabar\subseteq \cX \times_T\cX'$
  be its closure. Finally, define $Z\subseteq X\times X'$ as the special fibre of $\Gammabar$ and denote
  the associated class in $\CH(X\times X')$ by $[Z]$.
}
\end{name}

Let $q$ and $q'$ be the projections from $X\times X'$ to $X$ and $X'$ respectively.
\begin{definition}  \label{def:[Z]_*}
Define $[Z]_*: \CH(X) \to \CH(X')$ as the correspondence with kernel $[Z]$, i.e. as the map given by $[Z]_*(\alpha):= q'_*([Z].q^*\alpha)$ for all $\alpha \in \CH(X)$.
\end{definition}

We can now state the main result of this article, which is new even for the case of elementary Mukai
flops:
\begin{theorem}\label{thm:invertibility} 
  Let $X$ and $X'$ be birational irreducible symplectic varieties.
  Then the map $[Z]_*:\CH(X) \to \CH(X')$ is an isomorphism of graded rings. Its inverse is the
  correspondence $[Z]_*^t:\CH(X') \to \CH(X)$ with kernel $[Z]$ in the opposite direction.
\end{theorem}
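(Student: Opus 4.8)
The plan is to reduce everything to the generic fibre, where $\psi\colon\cX_\eta\too\cX'_\eta$ is an honest isomorphism and hence $\psi_*$ is tautologically a graded ring isomorphism, and then to transport the relevant identities to the special fibre by the specialization map $\sigma$ of Section \ref{sec:specialization}. The starting observation is that, by the very definition of $Z$ as the special fibre of $\Gammabar$, one has $[Z]=\sigma([\Gamma])$, where $[\Gamma]\in\CH(\cX_\eta\times\cX'_\eta)$ is the class of the graph of $\psi$ and $\sigma$ is the specialization map for the family $\cX\times_T\cX'\to T$ (whose special fibre is $X\times X'$). The key principle I would isolate and use repeatedly is that $\sigma$ is compatible with \emph{composition of correspondences} and with \emph{external products}: since both operations are built out of flat pull-back, intersection product and proper push-forward along morphisms of families over $T$ (the relevant projections from the iterated fibre products $\cX\times_T\cX'\times_T\cX''$ are proper because the fibres are compact), the compatibilities recorded in Section \ref{ssec:inttheo-sp} propagate to them. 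Thus any correspondence identity that holds on the generic fibre and is expressed purely in these operations descends to the special fibre.

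Invertibility I would obtain as follows. Because $\psi$ is an isomorphism, the graph classes satisfy $[\Gamma]^t\circ[\Gamma]=[\Delta_{\cX_\eta}]$ and $[\Gamma]\circ[\Gamma]^t=[\Delta_{\cX'_\eta}]$, where $\Delta$ denotes the diagonal (the intersections computing these composites are transverse, with multiplicity one). Applying $\sigma$ and using that specialization takes the generic relative diagonal to the special diagonal, i.e. $\sigma([\Delta_{\cX_\eta}])=[\Delta_X]$, together with compatibility with composition, yields $[Z]^t\circ[Z]=[\Delta_X]$ and $[Z]\circ[Z]^t=[\Delta_{X'}]$ in $\CH(X\times X)$ and $\CH(X'\times X')$. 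Since the diagonal acts as the identity, passing to the induced maps gives $[Z]^t_*\circ[Z]_*=\id_{\CH(X)}$ and $[Z]_*\circ[Z]^t_*=\id_{\CH(X')}$. Hence $[Z]_*$ is a bijection with inverse $[Z]^t_*$, and it is graded because $[Z]\in\CH^{\dim X}(X\times X')$: the graph of an isomorphism has the expected dimension, which is preserved under specialization.

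For multiplicativity I would encode the ring structure by the class $\mu_X:=[\Delta^{\mathrm{sm}}_X]\in\CH(X\times X\times X)$ of the small diagonal, viewed as a correspondence $X\times X\too X$, so that $(\mu_X)_*(\alpha\boxtimes\beta)=\alpha\cdot\beta$. The statement that $\psi_*$ is a ring homomorphism is exactly the correspondence identity $[\Gamma]\circ\mu_{\cX_\eta}=\mu_{\cX'_\eta}\circ\bigl([\Gamma]\boxtimes[\Gamma]\bigr)$, both sides being supported (with multiplicity one) on $\{(x,x,\psi x)\}$. Applying $\sigma$, and using $\sigma(\mu_{\cX_\eta})=\mu_X$, $\sigma([\Gamma])=[Z]$, $\sigma([\Gamma]\boxtimes[\Gamma])=[Z]\boxtimes[Z]$, and compatibility with composition, gives $[Z]\circ\mu_X=\mu_{X'}\circ\bigl([Z]\boxtimes[Z]\bigr)$. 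Evaluating both sides on $\alpha\boxtimes\beta$ produces $[Z]_*(\alpha\cdot\beta)=[Z]_*\alpha\cdot[Z]_*\beta$. Combined with the previous paragraph, $[Z]_*$ is an isomorphism of graded rings with inverse $[Z]^t_*$.

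The main obstacle, and where I would spend most of the effort, is making rigorous the principle that $\sigma$ commutes with composition of correspondences and with external products in the category of algebraic spaces. This requires carefully setting up the iterated fibre products over $T$, checking that the projection morphisms between them are morphisms of families to which the compatibilities of Section \ref{ssec:inttheo-sp} apply (in particular that the push-forwards are along proper maps, using compactness of the fibres), and verifying that $\sigma$ sends the \emph{universal} cycles in play — the relative diagonal and the relative small diagonal — to their special-fibre counterparts with the correct multiplicities. Once this bookkeeping is in place, the two correspondence identities above are immediate consequences of the single geometric input that $\psi$ is an isomorphism, and the theorem follows. It is worth stressing that one cannot instead argue through surjectivity of $\sigma$ on Chow groups: since the general fibre has Picard rank $1$ while $X$ may have larger Picard rank, $\sigma$ already fails to be surjective on $\CH^1$, so the correspondence-level argument is essential.
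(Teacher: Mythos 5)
Your proposal is correct and follows essentially the same route as the paper: reduce to the generic fibre where $\psi$ is an honest isomorphism (so that $[\Gamma]^t\circ[\Gamma]=[\Delta_{\cX_\eta}]$ and the small-diagonal identity $[\Gamma]\circ\mu_{\cX_\eta}=\mu_{\cX'_\eta}\circ([\Gamma]\boxtimes[\Gamma])$ hold tautologically), and transport these correspondence identities to the special fibre via specialization. The only cosmetic difference is that the paper implements your ``$\sigma$ commutes with composition of correspondences'' principle concretely, by writing down the relative kernels (e.g. ${\pr_{13}}_*\big([\Gammabar\times_T\cX].[\cX\times_T\Gammabar^t]\big)$) over $T$ and restricting them to both fibres using the triangle $s_0=\sigma\circ r_\eta$, which is exactly the bookkeeping you flag as the main remaining work.
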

The proof of this theorem is given in Section \ref{ssec:proofofthm}.

\begin{remark}
  The fact that $X$ and $X'$ are irreducible symplectic varieties is only used in the proof of Theorem
  \ref{thm:invertibility} in order
  to deduce the existence of families as in Proposition \ref{prop:families}. Therefore, the theorem holds
  more generally, whenever the existence of such families is known.
\end{remark}

\begin{remark}
  Instead of $Z$, one could consider $\overline{\Delta}\subseteq X\times X'$, where $\Delta$ is the graph
  of a birational isomorphism. However, the map $\big[\overline{\Delta}\big]_*$ is not multiplicative in
  general (deduce its non-multiplicativity in the case of elementary Mukai flops e.g. from the
  computations of \cite[Example 6.6]{LeeLinWang}).
\end{remark}

\subsection{Theorem \ref{thm:invertibility} in cohomology} \label{ssec:cohomology}
Before proving Theorem \ref{thm:invertibility}, we discuss the analogous statement in cohomology.

For the purpose of this subsection it is enough to work in the more general setting of (not necessarily
projective) complex manifolds. 

Let $X$ and $X'$ be birational compact hyperkähler manifolds. Then by 
\cite[Theorem 2.5]{Huybrechts2003} there exist deforming families of complex manifolds
$\cX$ and $\cX'$ satisfying analogous conditions as in Proposition \ref{prop:families}. Define $Z$ as above and
let $[Z] \in H^*(X\times X')$ be the cohomology class of the analytic cycle $Z$. This induces a correspondence 
$[Z]^H_*:H^*(X,\bZ) \to H^*(X',\bZ)$. The analogous statement to Theorem \ref{thm:invertibility} on the
level of cohomology is:
\begin{lemma}[cf. {\cite[Corollary 2.7]{Huybrechts2003}}]
  The map $[Z]^H_*:H^*(X,\bZ) \to H^*(X',\bZ)$ is an isomorphism of graded rings.
\end{lemma}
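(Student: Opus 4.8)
The plan is to transport the whole statement to a nearby fibre, where the two families literally coincide and the correspondence becomes the graph of an isomorphism. After shrinking $T$ to a contractible neighbourhood of $0$, form the smooth proper family $\cP := \cX \times_T \cX' \to T$; since $\cX \to T$ and $\cX' \to T$ are submersions with compact fibres, so is $\cP \to T$, with fibre $X \times X'$ over $0$ and $\cX_t \times \cX'_t$ over $t \neq 0$. By Ehresmann's theorem all three families are topologically trivial over $T$, so restriction to the central fibre is an isomorphism on cohomology and composing it with restriction to the fibre over $t$ realises parallel transport. This yields graded-ring isomorphisms $\rho_t : H^*(X) \iso H^*(\cX_t)$ and $\rho'_t : H^*(X') \iso H^*(\cX'_t)$, whose tensor product is the parallel transport $R_t : H^*(X \times X') \iso H^*(\cX_t \times \cX'_t)$ by the Künneth formula.

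Next I would follow the correspondence class through this trivialisation. The closure $\Gammabar \subseteq \cP$ is irreducible and dominates the smooth curve $T$, hence is flat over it, so its special fibre $\Gammabar_0 = Z$ is equidimensional of the expected dimension and the single class $[\Gammabar] \in H^*(\cP)$ restricts to $[Z]$ over $0$ and to the graph class $[\Gamma_t]$ over $t \neq 0$. Since restriction of a global class to a fibre is parallel transport, we obtain $R_t([Z]) = [\Gamma_t]$. Because the correspondence action is assembled only from pull-back, cup product and proper push-forward, each of which commutes with restriction to fibres in a smooth proper family, this identification of kernels propagates to the commuting square $\rho'_t \circ [Z]^H_* = [\Gamma_t]_* \circ \rho_t$, that is
\[
[Z]^H_* = (\rho'_t)^{-1} \circ [\Gamma_t]_* \circ \rho_t .
\]

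It then remains to identify the right-hand correspondence. The cycle $\Gamma_t$ is the graph of the isomorphism $\psi_t : \cX_t \iso \cX'_t$ obtained by restricting $\Psi$, and for the graph of an isomorphism the correspondence action is just the push-forward $(\psi_t)_*$, which is an isomorphism of graded rings. As $\rho_t$ and $\rho'_t$ are themselves graded-ring isomorphisms, the displayed formula exhibits $[Z]^H_*$ as a composite of graded-ring isomorphisms and we are done. The one point that requires care is the specialisation identity $[\Gammabar]|_{X \times X'} = [Z]$ --- that the restriction of the global class of the closure coincides with the class of the cycle-theoretic special fibre, with its multiplicities; this is the content hidden in \cite[Corollary 2.7]{Huybrechts2003} and rests on the flatness of $\Gammabar \to T$ together with the compatibility of the cycle class map with specialisation.
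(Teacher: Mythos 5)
Your proof is correct and follows essentially the same route as the paper: Ehresmann's theorem gives topological triviality, parallel transport identifies $[Z]$ with the graph class $[\Gamma_t]$ of the isomorphism $\cX_t\iso\cX'_t$, and the correspondence of such a graph is the push-forward along an isomorphism, hence a graded ring isomorphism. The paper compresses all of this into a single sentence, while you spell out the intermediate compatibilities (commutation of the correspondence action with restriction to fibres, and the identification of the limit cycle), which is exactly the content the paper leaves implicit.
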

\begin{proof}
  This follows from Ehresmann's Theorem (see e.g. \cite[Theorem 9.3]{VoisinI}), since the cycle $[Z]$ is by
  definition the limit cycle of the graphs of the isomorphisms $\cX_t\iso \cX'_t$ for $t\neq
  0$. 
\end{proof}

\subsection{Proof of Theorem \ref{thm:invertibility}} \label{ssec:proofofthm}
  In order to prove the theorem, one needs to show:
  \begin{itemize}
  \item {\it (Compatibility with graduation):} The map $[Z]_*$ respects the grading of the Chow rings,
    i.e. $[Z]_*\big(\CH^k(X)\big)\subseteq \CH^k(X')$.
  \item {\it (Invertibility):} The maps $[Z]_*$ and $[Z]_*^t$ are inverse.
  \item {\it (Multiplicativity):} The map $[Z]_*$ is multiplicative, i.e. for all $\alpha, \beta \in
    \CH(X)$ the equality $[Z]_*(\alpha).[Z]_*(\beta)=[Z]_*(\alpha.\beta)$ holds.
  \end{itemize}

The compatibility with the graduation follows from the fact, that $Z$ is of pure dimension dim$(X)$.

\vspace{0.3 em}\noindent 
{\it Proof of invertibility.}
  By symmetry of the situation, we only need to
  show that $[Z]_*^t\circ [Z]_*= \id$.
  The map $[Z]_*^t\circ [Z]_*$ is the correspondence with kernel $\alpha_0:={\pr_{13}}_*\big([Z\times
  X].[X\times Z^t]\big)\in \CH(X\times X)$, where $\pr_{13}:\CH(X\times X'\times X)\to \CH(X\times X)$ is
  the projection to the first and third factor (cf. \cite[Section 16.1]{Fulton}).
  We will show that $\alpha_0= [\Delta_X]$, using the existence of families as in Proposition
  \ref{prop:families} and the specialization map (cf. Section \ref{sec:specialization}).

  Let $\cX$ and $\cX'$ be families as in Proposition \ref{prop:families} and keep the notation of Section
  \ref{ssec:notation+theorem}. Consider the cycle 
  $$\alpha:= {\pr_{13}}_*\big([\Gammabar\times_T \cX].[\cX\times_T \Gammabar^t]\big)\in
  \CH(\cX\times_T \cX),$$ where once again $\pr_{13}$ is the projection to the first and third factor.
  Its restriction to the generic fibre is $\alpha_\eta :=r_\eta(\alpha)= 
 {\pr_{13}}_*\big([\Gamma\times_{k(\eta)} \cX_\eta].[\cX_\eta\times_{k(\eta)}
  \Gamma^t]\big)\in \CH(\cX_\eta\times_{k(\eta)} \cX_\eta)$. Using the fact that $\Gamma$ is the graph of
  an isomorphism, this can explicitly be determined as: $\alpha_\eta=
  [\Delta_{\cX_\eta}]$.

Since, moreover, the restriction of $\alpha$ to the special fibre is $s_0(\alpha)=\alpha_0$, this allows us to conclude:
$$ \alpha_0 = s_0(\alpha) = \sigma(\alpha_\eta) =\sigma([\Delta_{\cX_\eta}])=[\Delta_{X}] ,
$$
thus proving the invertibility of $[Z]_*$.

\vspace{0.3 em}\noindent 
{\it Proof of multiplicativity.}  Let $\Delta_3\subseteq X\times X\times X$ be the small diagonal,
i.e. the image of the natural inclusion $X\inj X\times X\times X$ and denote the small 
diagonal in $X'\times X'\times X'$  by $\Delta_3'$.  Consider the following diagram:
  $$
  \begin{tikzcd}[column sep = large]
    \CH(X)\times \CH(X) \ar{r}{[Z]_*\times[Z]_*}\ar{d}[swap]{\times} &\CH(X')\times \CH(X')\ar{d}{\times}\\
    \CH(X\times X)\ar{r}{[Z\times Z]_*} \ar{d}[swap]{[\Delta_3]_*} &\CH(X'\times X') \ar{d}{[\Delta_3']_*}\\
    \CH(X) \ar{r}{[Z]_*}  & \CH(X') \ .
  \end{tikzcd}
  $$

We will show the following:
\begin{enumerate}[(a)]
\item The composition $[\Delta_3]_* \circ \times$ coincides with the multiplication and the same
  holds for $[\Delta_3']_* \circ \times$, \label{it:a}
\item the upper rectangle is commutative, and \label{it:b}
\item also the lower rectangle is commutative.\label{it:c}
\end{enumerate}
Together, this proves the proposition. 

\vspace{0.3 em}\noindent 
{\it Proof of (\ref{it:a}).}  
Since the correspondence with kernel $[\Delta_3]$ is just pulling back to the diagonal, this follows
from the reduction to the diagonal (see \cite[p.\,427]{Hartshorne}). This also holds for $[\Delta_3']_* \circ \times$.

\ifmyversion
\begin{note}
We will show that the following diagram is a commutative:
$$
\begin{tikzcd}
  \CH(X)\times \CH(X) \ar{d}[swap]{\times}\ar{rr}{\mu} & & \CH(X) \\
  \CH(X\times X)\ar{r}{\_\,.\,\Delta} \ar[bend right, out=-50, in=-100]{rru}[swap]{[\Delta_3]_*}& \CH(X\times X)\ . \ar{ru}[swap]{{\pr_2}_*} &
\end{tikzcd}
$$
The upper part is commutative since for all $\alpha, \beta \in \CH(X)$:
\begin{align*}
  {\pr_2}_*\big((\alpha\times \beta)\,.\, \Delta \big)
  \overset{\rm (PF)}{=} {\pr_2}_*\Big(\Delta_*\big(\Delta^*(\alpha\times \beta) \big)\Big) 
  = \underbrace{{\pr_2}_*\big(\Delta_*}_{=\id_*}(\alpha.\beta) \big)
  =\alpha.\beta\,.
\end{align*}
(The middle identity is the reduction to the diagonal (see \cite[p.\,427]{Hartshorne}).
 
For the lower part, compute (for all $\gamma \in \CH(X\times X)$:
\begin{align*}
  [\Delta_3]_*(\gamma)
  &=\pr_{3*}\big((\gamma \times X)\, . \, \Delta_3\big)
  =\pr_{3*}\big((\gamma \times X)\, . \, (\Delta\times X) \, . \, (X\times \Delta)\big)
  =\pr_{3*}\Big(\big((\gamma.\Delta) \times X\big)\, . \, (X\times \Delta)\Big)\\
  &=\pr_{3*}\pr_{23*}\big(\pr_{12}^*(\gamma.\Delta) \, . \, \pr_{23}^*(\Delta)\big)
  =\pr_{3*}\big(\pr_{23*}\pr_{12}^*(\gamma.\Delta)\, . \, \Delta\big)\\
  &=\underbrace{\pr_{3*}\Delta_*}_{=\id_*}\big(\Delta^*\pr_{23*}\pr_{12}^*(\gamma.\Delta)\big)
  = \Delta^*\pr_{23*}\pr_{12}^*(\gamma.\Delta)
  \overset{\rm (*)}{=}\underbrace{ \Delta^* \rp_{23|2}^*}_{=\id ^*} \rp_{12|2*}(\gamma.\Delta)\\
  &=\rp_{12|2*}(\gamma.\Delta)\,.
\end{align*}
The equality ${\rm (*)}$ follows from application of \cite[Proposition 1.7]{Fulton} to the diagram:
$$
\begin{tikzcd}
  X_1\times X_2\times X_3 \ar{r}{\pr_{12}}\ar{d}[swap]{\pr_{23}}&X_1\times X_2\ar{d}{\pr_{12|2}}\\
  X_2\times X_3 \ar{r}[swap]{\pr_{23|2}}&X_2\,.
\end{tikzcd}
$$
\end{note}
\fi

\vspace{0.3 em}\noindent 
{\it Proof of (\ref{it:b}).}
Use functoriality of $\times$ (see \cite[Proposition 1.10]{Fulton}) 
to check this explicitly.

\ifmyversion
\begin{note}
  Let $(\alpha, \beta)\in \CH(X)\times \CH(X)$. Then use the functoriality of $\times$ (see
  \cite[Proposition 1.10]{Fulton}) and its compatibility with intersection to see:
  \begin{align*}
    [Z\times Z]_* ( \alpha \times \beta )
    &={p_{X'\times X'}}_*\big((\alpha\times X'\times \beta \times X').[Z\times Z]\big)\\
    &=({p_{X'}}\times{p_{X'}})_*\Big(\big((\alpha\times X').[Z]\big)
        \times \big((\beta \times X').[Z]\big)\Big) \\
    &=p_{X'*}\big((\alpha\times X').[Z]\big) \times p_{X'*}\big((\beta\times X').[Z]\big)\\
    &=[Z]_*(\alpha)\times [Z]_*(\beta).
  \end{align*}
\end{note}
\fi

\vspace{0.3 em}\noindent 
{\it Proof of (\ref{it:c}).} We will apply similar methods as for the proof of the invertibility of $[Z]_*$.
Let $p_{124}:X\times X\times X\times X'\to X\times X\times X'$ be
the projection to the first, second, and fourth factor, and $p_{125}:X\times X \times X'\times
X' \times X' \to X\times X \times X'$ be the projection to the first, second and fifth
factor. Then
$[Z]_*\circ [\Delta_3]_*$ is the correspondence with kernel $\alpha_0:=p_{124*}\big([\Delta_3 \times
X'].[X\times X \times Z] \big)$ (see \cite[Chapter 16.1]{Fulton}), and analogously $[\Delta_3']_*\circ [Z\times
Z]_*$ is the correspondence with kernel 
$\beta_0:= p_{125*}\big([Z\times Z \times X'].[X\times X\times \Delta_3'] \big)$.

Consider families $\cX$ and $\cX'$ as in Proposition \ref{prop:families}, and keep the notation
of Section \ref{ssec:notation+theorem}.
Denote by $\overline{\Delta_{\eta 3}}$ the small diagonal in $\cX\times_T \cX\times_T \cX$ and by $\overline{\Delta_{\eta 3}'}$ the small diagonal in $\cX'\times_T \cX'\times_T \cX'$.
Then $\alpha_0$ is the specialization of the cycle $\alpha:=p_{124*}\big([\overline{\Delta_{\eta 3}} \times_T
\cX'].[\cX\times_T \cX \times_T \Gammabar] \big)$ and $\beta_0$ is the specialization of the cycle 
$\beta:=p_{125*}\big([\Gammabar\times_T \Gammabar \times_T \cX].[\cX\times_T \cX\times_T
\overline{\Delta'_{\eta 3}}] \big)$. Here, $p_{124}$ and $p_{125}$ are similar as before.

It is thus enough to show that $r_\eta(\alpha)$ coincides
with $r_\eta(\beta)$. 
Let $\Sigma$ be the image of the map 
$\cX_\eta \to \cX_\eta\times_{k(\eta)} \cX_\eta \times_{k(\eta)} \cX'_\eta$, which is induced by the
identity in the first two factors and the isomorphism $\cX_\eta \iso \cX'_\eta$ in the last factor.
Observe that:
\begin{align*}
  r_\eta(\alpha)&=p_{124*}\big([\Delta_{\eta3} \times_{k(\eta)} \cX'_\eta].[\cX_\eta\times_{k(\eta)}
  \cX_\eta \times_{k(\eta)} \Gamma] \big)\\
  &= [\Sigma] 
  = p_{125*}\big([\Gamma\times_{k(\eta)} \Gamma \times_{k(\eta)} \cX_\eta].
  [\cX_\eta \times_{k(\eta)} \cX_\eta \times_{k(\eta)}  \Delta'_{\eta3}] \big)\\ 
  &= r_\eta(\beta).
\end{align*}
\ifmyversion
In order to see that this intersection indeed gives $[\Sigma]$, observe more generally that 
$\Gamma_g\.\Gamma_f=\Gamma_{(f,g)}\big(=\{(x,f(x),g(x))\}\big)$. 
This can for example be checked on the level of affine schemes,
since the scheme-theoretic intersection of $\Gamma_f$ and $\Gamma_g$ is already the Graph of $(f,g)$.
\fi
This concludes the proof of multiplicativity and thus the proof of Theorem \ref{thm:invertibility}.
\hfill\qedsymbol

\section{First applications} \label{sec:applications} In this section we give first applications of
Theorem \ref{thm:invertibility}: We relate it 
 to the conjectural Bloch--Beilinson filtration and apply it to Beauville's
conjecture on the weak splitting property and Voisin's generalization.

Throughout this section, we will always work with the rational Chow rings $\CH_\bQ(X)$.

\subsection{Application to the Bloch--Beilinson filtration}
In contrast to the cohomology ring of a variety, the Chow ring is not well understood.
While the Hodge conjecture predicts the image of the cycle class map $$c_X:\CH_\bQ(X) \to H^*(X,\bQ),$$ its
kernel is still rather mysterious.

The Bloch--Beilinson conjecture predicts for arbitrary smooth projective $X$ the existence of a
descending filtration
 $$\CH^k_\bQ(X)=F^0\CH^k_\bQ(X)\supseteq \dotso \supseteq F^{k+1}\CH^k_\bQ(X)=0\,,$$
which is functorial, compatible with multiplication, and satisfies $F^1\CH_\bQ(X) = \ker (c_X)$.
There are various candidates for such a filtration. For a discussion of this topic we refer to
\cite[p.\,245 ff.]{Jannsen:Bloch-Beilinson-conjecture}.

Now consider birational irreducible symplectic varieties $X$ and $X'$ and a functorial candidate $F^i$ for
the filtration. As an immediate consequence of Theorem
\ref{thm:invertibility}, one observes $F^i\CH^k_\bQ(X')=[Z]_*\big(F^i\CH^k_\bQ(X)\big)$. In particular,
$F^{k+1}\CH^k_\bQ(X)=0$ if and only if $F^{k+1}\CH^k_\bQ(X')=0$.

\subsection{Application to conjectures of Beauville and Voisin}
The original motivation for this
article was to generalize \cite[Proposition 2.6]{Beauville}. In this subsection we present such a
generalization deducing it from our previous results.

Let $X$ be a nonsingular complex projective variety. In \cite{Beauville} Beauville defines
$\DCH(X)\subseteq \CH_\bQ(X)$ as the subalgebra generated by divisor classes. Then $X$ satisfies the {\it weak splitting property} if the restriction of the
  cycle class map to $$\DCH(X)\inj H^*(X,\bQ)$$ is injective.
This notion was inspired by the fact that for a simply connected $X$ the weak splitting property is satisfied, if its Bloch--Beilinson filtration {\it splits} (i.e. comes from
a ring graduation $F^p\CH^k_\bQ(X)=\bigoplus_{j=p}^k \tilde{F}^j\CH^k_\bQ(X)$).

Beauville formulates the following conjecture:
\begin{conjecture}[\cite{Beauville}] \label{conj:wsp}
  An irreducible symplectic projective variety satisfies the weak splitting property.
\end{conjecture}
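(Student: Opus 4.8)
The plan is to reduce the injectivity of $\restr{c_X}{\DCH(X)}$ to a controllable statement about $0$-cycles. Write $\dim_\bC X = 2n$. As $c_X$ is graded and $\DCH(X)$ is generated by $\CH^1_\bQ(X)$, the weak splitting property amounts to injectivity of $c_X$ on each graded piece $\DCH^k(X)\to H^{2k}(X,\bQ)$, and the subtlest degree is $k=2n$, where $H^{4n}(X,\bQ)\iso \bQ$. In this top degree injectivity is equivalent to the one-dimensionality $\dim_\bQ \DCH^{2n}(X)=1$; that is, every top product of divisor classes should be a rational multiple of a single canonical $0$-cycle $o_X\in \CH_0(X)_\bQ$. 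First I would therefore try to produce such a cycle together with a proof that it absorbs all top intersections of elements of $\CH^1_\bQ(X)$.

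For a K3 surface $S$ this canonical cycle is the Beauville--Voisin class, the class of any point on a rational curve, through which all products of divisors factor in $\CH_0(S)_\bQ$. For the two known deformation types $\Hilb^n(S)$ and $K_n(A)$ I would bootstrap from the surface: using the explicit additive and multiplicative description of their Chow rings (via symmetric products, respectively the Nakajima-type generators) one can rewrite a top product of divisor classes as a cycle pulled back and pushed forward from the surface and its symmetric powers, and then invoke the surface-level statement to see that it lands in $\bQ\cdot o_X$. The intermediate degrees $1\le k<2n$ are not formal consequences of the top degree; the cleanest way to treat all degrees simultaneously would be to establish a multiplicative splitting of the (conjectural) Bloch--Beilinson filtration on $\CH_\bQ(X)$, placing $\DCH(X)$ inside a degree-zero summand $\CH_{(0)}(X)$ and showing that $\restr{c_X}{\CH_{(0)}(X)}$ is injective. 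Theorem \ref{thm:invertibility} then lets me transport whatever I prove for one birational model to every other model in the same birational class, since $[Z]_*$ is a graded ring isomorphism carrying divisors to divisors.

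The main obstacle is that none of this is available for a general irreducible symplectic variety: the conjecture is genuinely open. There is no known construction of a distinguished $0$-cycle with the required multiplicativity outside the two classical series, and even within them the structural splitting is delicate. Worst of all, the property at stake is Chow-theoretic and hence not a deformation invariant --- Mumford's infinite-dimensionality result already shows that $\CH_0$ varies wildly in families --- so deformation equivalence alone cannot spread the weak splitting property from a favourable model to an arbitrary one. What Theorem \ref{thm:invertibility} does secure, and all that I can honestly claim unconditionally here, is birational invariance within a fixed deformation type; turning this into a proof of Beauville's conjecture in full generality would require genuinely new input beyond the methods of this paper.
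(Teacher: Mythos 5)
You have correctly identified the essential point: this statement is a \emph{conjecture} in the paper (quoted from \cite{Beauville}), and the paper offers no proof of it --- nor does one exist. Your refusal to claim a proof is exactly right, and your closing assessment matches the paper's actual contribution precisely: what the paper proves about Conjecture \ref{conj:wsp} is only its birational invariance (Theorem \ref{thm:generalize Beauville}), obtained from Theorem \ref{thm:invertibility} via Lemma \ref{lem:restr[Z]iso}, which is the same ``transport along $[Z]_*$'' mechanism you describe. Your strategic sketch (a canonical $0$-cycle absorbing top products of divisors, as in the Beauville--Voisin class for K3 surfaces, and a multiplicative splitting of the conjectural Bloch--Beilinson filtration) is a fair summary of how the known special cases (Hilb$^2$, Hilb$^3$ by Beauville, Hilb$^n(S)$ for $n\leq 2\,b_2(S)_{tr}+4$ by Voisin) are attacked in the literature, but, as you say yourself, it does not and cannot amount to a proof of the general statement. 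One minor remark: your phrase ``birational invariance within a fixed deformation type'' is redundant rather than wrong --- birational irreducible symplectic varieties are automatically deformation equivalent, which is exactly what Proposition \ref{prop:families} exploits.
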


A stronger version of this conjecture was formulated by Voisin in \cite{Voisin-08}:
Define $R(X)\subseteq \CH_\bQ(X)$ as the subalgebra generated by $\CH^1_\bQ(X)$ together
with $\{c_i(T_X) \}_{i\in \bN}$\,.
\begin{conjecture}[{\cite[Conjecture 1.3]{Voisin-08}}] \label{conj:Voisin}
  For any irreducible symplectic complex variety $X$ the restriction $\restr{c_X}{R(X)}:R(X)\inj H^*(X,\bQ)$ of the cycle class
  map to the subalgebra $R(X)$
   is injective.
\end{conjecture}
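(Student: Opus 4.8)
The statement is a conjecture that remains open in general, so what follows describes the line of attack I would pursue rather than a complete argument. The plan is to first \emph{reduce} the injectivity to a short list of explicit models using the birational invariance already established, and then to attack those models by producing a multiplicative splitting of the (conjectural) Bloch--Beilinson filtration in which every divisor class and every Chern class $c_i(T_X)$ sits in the lowest graded piece --- the piece that $c_X$ maps injectively to cohomology.

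First I would carry out the reduction. By Theorem \ref{thm:generalize Beauville} the validity of Conjecture \ref{conj:Voisin} is unchanged under birational correspondences, so it suffices to treat one representative per birational class. To move freely within a deformation type I would want, in addition, \emph{deformation invariance}: in a smooth family $\cX\to T$ the subalgebras $R(\cX_t)$ are compatible with the specialization maps of Section \ref{sec:specialization}, since divisors and the Chern classes $c_i(T_{\cX_t})$ specialize to divisors and Chern classes. One would then hope to propagate the injectivity of $\restr{c_{\cX_t}}{R(\cX_t)}$ across the family by the specialization argument used in the proof of Theorem \ref{thm:invertibility}. Granting this, the conjecture reduces to the currently known deformation types, hence to the explicit models $\Hilb^n(S)$ for a K3 surface $S$, $K_n(A)$ for an abelian surface $A$, and the two O'Grady models.

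For these models I would build on a canonical cycle. On a K3 surface $S$ the Beauville--Voisin class $o_S\in\CH^2_\bQ(S)$ satisfies $c_2(T_S)=24\,o_S$ and absorbs all intersections of divisors, which is precisely Conjecture \ref{conj:Voisin} for $S$. To transport this to $\Hilb^n(S)$ I would use the tautological description of $\CH_\bQ(\Hilb^n(S))$ through the nested Hilbert schemes and their incidence correspondences, organised by a class $L\in\CH^2_\bQ(X\times X)$ whose cohomology class is the Beauville--Bogomolov class $\fB\in H^4(X\times X,\bQ)$. The associated Fourier transform $\cF(\alpha):=p_{2*}\big(e^{L}\,.\,p_1^*\alpha\big)$, with $p_1,p_2$ the two projections, is expected to be an involution up to normalization, producing a bigrading $\CH_\bQ(X)=\bigoplus_s\CH_\bQ(X)_{(s)}$ that splits the Bloch--Beilinson filtration. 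The crux is to verify that this grading is \emph{multiplicative} and that $\CH^1_\bQ(X)$ and each $c_i(T_X)$ lie in $\CH_\bQ(X)_{(0)}$; since $\CH_\bQ(X)_{(0)}$ meets $\ker c_X$ trivially, injectivity of $\restr{c_X}{R(X)}$ would follow.

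I expect the last construction to be the decisive obstacle. Multiplicativity of the Fourier decomposition --- equivalently, the existence of a \emph{multiplicative} splitting of the Bloch--Beilinson filtration placing divisors and Chern classes in weight zero --- is known only in isolated cases, such as the Fano variety of lines on a cubic fourfold and Hilbert schemes of small length, and is open for general $n$ and for the remaining deformation types. A second, subtler obstacle is making the deformation-invariance step rigorous: the specialization map need not reflect $\ker c_X$ faithfully, so propagating an injectivity statement along a family requires genuine control of how the kernel of the cycle class map behaves in the family. Until at least one of these points is resolved, the injectivity of $\restr{c_X}{R(X)}$ cannot be concluded in general --- which is why the statement stands as Conjecture \ref{conj:Voisin}.
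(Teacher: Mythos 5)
You have correctly identified the status of this statement: it is an open conjecture, quoted by the paper from Voisin's work (\cite[Conjecture 1.3]{Voisin-08}), and the paper contains no proof of it. There is therefore no argument in the paper to compare yours against; the only result the paper establishes \emph{about} this conjecture is Theorem \ref{thm:generalize Beauville}, namely its invariance under birational correspondences, which is exactly the ingredient your reduction step invokes. Your sketch beyond that point --- propagating the statement across deformations via specialization, and then attacking the model cases $\Hilb^n(S)$, $K_n(A)$, and the O'Grady examples through a Fourier transform inducing a multiplicative splitting of the Bloch--Beilinson filtration --- is a reasonable research program, and you are right to flag its two weak points yourself: specialization does not obviously control $\ker c_X$, so injectivity statements do not pass through families for free, and multiplicativity of the candidate splitting is precisely the open core of the problem (indeed, even for $\Hilb^n(S)$ the conjecture is only known for $n\leq 2\,b_2(S)_{tr}+4$, so the ``explicit models'' are not actually settled cases). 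In short: your assessment that the statement cannot currently be proved is the correct one, and it matches the paper's own treatment of it as a conjecture.
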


 For K3 surfaces $S$ the subalgebras $\DCH(S)$ and $R(S)$ coincide, and the conjectures are known to be
 true in this case (see \cite{Beauville-Voisin}). 

Furthermore, Beauville proves in \cite{Beauville} that for any K3 surface $S$ the Hilbert schemes Hilb$^2(S)$ and
Hilb$^3(S)$ satisfy the weak splitting property. In \cite{Voisin-08}, Voisin extends this result by showing
that Hilb$^n(S)$ already satisfies Conjecture \ref{conj:Voisin}, if $n\leq 2 \:b_2(S)_{tr}+4$.
Here, $b_2(S)_{tr}$ denotes the rank of the transcendental lattice of $S$.

Beauville shows (\cite[Proposition 2.6]{Beauville}) that the weak splitting property is invariant under
elementary Mukai flops (cf. Section \ref{ssec:Mukai flops}). 
By means of Theorem \ref{thm:invertibility}, we can generalize this to arbitrary birational correspondences
 on the one hand, and to the
more general conjecture of Voisin on the
other hand. At the same time, the proof below is simpler than the one in \cite{Beauville}, as it does not
make any use of the multiplicative structure of the cohomology of an irreducible symplectic variety.
\begin{theorem}\label{thm:generalize Beauville}
  Conjecture \ref{conj:wsp} and Conjecture \ref{conj:Voisin} are both invariant under birational
  correspondences;
i.e. for birational irreducible symplectic varieties $X$ and $X'$, the restriction $\restr{c_X}{\DCH(X)}$
 is injective if and only if $\restr{c_{X'}}{\DCH(X')}$ is, and the same holds for
 $\restr{c_X}{R(X)}$.
\end{theorem}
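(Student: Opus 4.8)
The plan is to derive both statements from Theorem~\ref{thm:invertibility} and its cohomological counterpart. The central object is the square
$$
\begin{tikzcd}
  \CH_\bQ(X) \ar{r}{[Z]_*} \ar{d}[swap]{c_X} & \CH_\bQ(X') \ar{d}{c_{X'}} \\
  H^*(X,\bQ) \ar{r}{[Z]^H_*} & H^*(X',\bQ) \,,
\end{tikzcd}
$$
which commutes because the cycle class map is compatible with proper push-forward, pull-back and the intersection product, so that $c_{X'}\circ[Z]_* = [Z]^H_*\circ c_X$. By Theorem~\ref{thm:invertibility} the top arrow is an isomorphism of graded rings, and by the Lemma of Section~\ref{ssec:cohomology} the bottom arrow is an isomorphism of graded rings as well.

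Since $[Z]_*$ is a graded ring isomorphism it restricts to an isomorphism $\CH^1_\bQ(X)\overset{\iso}{\too}\CH^1_\bQ(X')$, and, being a ring homomorphism, it maps the subalgebra generated by $\CH^1_\bQ(X)$ onto the one generated by $\CH^1_\bQ(X')$; hence $[Z]_*\big(\DCH(X)\big)=\DCH(X')$. For $R(X)$ the only additional input needed is that $[Z]_*$ matches up the Chern classes of the tangent bundles, i.e.
$$[Z]_*\big(c_i(T_X)\big)=c_i(T_{X'}) \quad\text{for all } i.$$
Granting this, $[Z]_*$ sends the generators $\CH^1_\bQ(X)$ and $\{c_i(T_X)\}$ of $R(X)$ onto the generators $\CH^1_\bQ(X')$ and $\{c_i(T_{X'})\}$ of $R(X')$, so that $[Z]_*\big(R(X)\big)=R(X')$.

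Establishing the Chern class identity is the only non-formal step and the main obstacle. I would prove it by the same specialization technique used for Theorem~\ref{thm:invertibility}. Fix families $\cX,\cX'\to T$ as in Proposition~\ref{prop:families} and keep the notation of Section~\ref{ssec:notation+theorem}. The relative tangent bundle $T_{\cX/T}$ restricts to $T_X$ on the special fibre and to $T_{\cX_\eta}$ on the generic fibre, and likewise for $\cX'$; over $T\setminus\{0\}$ the isomorphism $\Psi$ identifies $T_{\cX/T}$ with $\Psi^*T_{\cX'/T}$, so on the generic fibre $[\Gamma]_*\big(c_i(T_{\cX_\eta})\big)=c_i(T_{\cX'_\eta})$. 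Now put $\Theta:=q'_*\big([\Gammabar].q^*c_i(T_{\cX/T})\big)\in\CH(\cX')$, where $q,q'$ are the two projections of $\cX\times_T\cX'$. Because the maps $s_0$, $r_\eta$ and $\sigma$ are compatible with push-forward, pull-back, intersection and Chern classes (Section~\ref{sec:specialization}), one computes $s_0(\Theta)=[Z]_*\big(c_i(T_X)\big)$ and $r_\eta(\Theta)=c_i(T_{\cX'_\eta})$; the commutative triangle $s_0=\sigma\circ r_\eta$ for the family $\cX'$ then gives $s_0(\Theta)=\sigma\big(c_i(T_{\cX'_\eta})\big)=c_i(T_{X'})$. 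Comparing the two expressions for $s_0(\Theta)$ yields the desired identity.

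Finally, the theorem is a diagram chase. Restricting the square to $\DCH(X)$ gives $\restr{c_{X'}}{\DCH(X')}\circ[Z]_*=[Z]^H_*\circ\restr{c_X}{\DCH(X)}$, in which $[Z]_*\colon\DCH(X)\to\DCH(X')$ and $[Z]^H_*$ are isomorphisms; hence the two kernels correspond under $[Z]_*$, and $\restr{c_X}{\DCH(X)}$ is injective if and only if $\restr{c_{X'}}{\DCH(X')}$ is. The identical argument with $R$ in place of $\DCH$ handles Voisin's conjecture, so the only genuine work is the Chern class identity of the third paragraph.
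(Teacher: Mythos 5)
Your proposal is correct and follows essentially the same route as the paper: the paper likewise proves $[Z]_*\big(c_i(T_X)\big)=c_i(T_{X'})$ by specializing the cycle $[\Gammabar]_*\big(c_i(T_{\cX|T})\big)$ (its Lemma \ref{lem:[Z]_*(ch(T))=ch(T)}), deduces that $[Z]_*$ restricts to isomorphisms $\DCH(X)\iso\DCH(X')$ and $R(X)\iso R(X')$, and concludes by the same commutative diagram comparing $c_X$ and $c_{X'}$ via $[Z]_*$ and $[Z]^H_*$.
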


The following facts will be useful for the proof:

 \begin{lemma}\label{lem:[Z]_*(ch(T))=ch(T)}
    If $X$ and $X'$ are birational irreducible symplectic varieties, 
    then $[Z]_*\big(c_i(T_X)\big)=c_i(T_{X'})$ for
    any $i\in \bN$.
  \end{lemma}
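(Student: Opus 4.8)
The plan is to prove this using the specialization technique that underlies the proof of Theorem~\ref{thm:invertibility}, since the Chern classes of the tangent bundle behave well in families. The key observation is that the tangent bundles of the fibres $\cX_t$ fit together into a \emph{relative} tangent bundle $T_{\cX/T}$, whose Chern classes restrict correctly on both the generic and the special fibre.

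First I would work on the family $\cX\to T$ and consider the relative tangent sheaf $T_{\cX/T}$, which is locally free of rank $\dim X$ since $\pi$ is smooth. Its Chern classes $c_i(T_{\cX/T})\in\CH(\cX)$ specialize well: because $s_t$ is the pull-back to the fibre $\cX_t$ and is compatible with taking Chern classes (as recorded in Section~\ref{sec:specialization}), one has $s_t\big(c_i(T_{\cX/T})\big)=c_i(T_{\cX_t})$ for every $\bC$-rational point $t$, and likewise $r_\eta\big(c_i(T_{\cX/T})\big)=c_i(T_{\cX_\eta})$ on the generic fibre. In particular $c_i(T_X)=s_0\big(c_i(T_{\cX/T})\big)=\sigma\big(c_i(T_{\cX_\eta})\big)$ by commutativity of the specialization triangle, and the same statements hold for $\cX'$ with its own relative tangent bundle.

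Next I would exploit that over $T\setminus\{0\}$ the isomorphism $\Psi:\cX_{T\setminus\{0\}}\iso\cX'_{T\setminus\{0\}}$ is an isomorphism \emph{over} $T$, so it carries the relative tangent bundle of one family to that of the other: $\Psi^*\big(T_{\cX'/T}|_{T\setminus\{0\}}\big)\cong T_{\cX/T}|_{T\setminus\{0\}}$. Restricting to the generic fibre, the isomorphism $\psi:\cX_\eta\iso\cX'_\eta$ satisfies $\psi^*\big(c_i(T_{\cX'_\eta})\big)=c_i(T_{\cX_\eta})$. Since $\psi$ is an isomorphism whose graph is $\Gamma$, the correspondence with kernel $[\Gamma]$ realizes $\psi_*=(\psi^{-1})^*$, giving $[\Gamma]_*\big(c_i(T_{\cX_\eta})\big)=c_i(T_{\cX'_\eta})$ on $\CH(\cX'_\eta)$. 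The remaining task is to transfer this identity on the generic fibre down to the special fibre. Here I would mirror the invertibility argument: consider the family-level correspondence class $[\Gammabar]$ on $\cX\times_T\cX'$ together with the cycle $c_i(T_{\cX/T})$, form the relative correspondence $q'_*\big([\Gammabar]\,.\,q^*c_i(T_{\cX/T})\big)\in\CH(\cX')$, and observe that its restriction $r_\eta$ to the generic fibre is $[\Gamma]_*\big(c_i(T_{\cX_\eta})\big)=c_i(T_{\cX'_\eta})$, while its restriction $s_0$ to the special fibre is exactly $[Z]_*\big(c_i(T_X)\big)$, because $Z$ is the special fibre of $\Gammabar$ and $s_0$ is compatible with push-forward, pull-back, and intersection. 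Applying the commutative specialization triangle then yields
\[
[Z]_*\big(c_i(T_X)\big)=s_0\Big(q'_*\big([\Gammabar]\,.\,q^*c_i(T_{\cX/T})\big)\Big)=\sigma\big(c_i(T_{\cX'_\eta})\big)=c_i(T_{X'}),
\]
which is the desired equality.

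I expect the main obstacle to be the careful bookkeeping of restrictions: one must verify that forming the correspondence commutes with $r_\eta$ and $s_0$, i.e. that $r_\eta\big(q'_*([\Gammabar].q^*\gamma)\big)=[\Gamma]_*\big(r_\eta(\gamma)\big)$ and the analogous statement for $s_0$ producing $[Z]_*$. This rests on the fact, already recorded at the end of Section~\ref{sec:specialization}, that all three maps $r_\eta$, $s_t$, $\sigma$ are compatible with intersection product, pull-back, proper push-forward, and Chern classes; the only genuinely new point beyond Theorem~\ref{thm:invertibility} is that $c_i(T_{\cX/T})$ is a globally defined class on $\cX$ restricting correctly on both fibres, which follows from smoothness of $\pi$. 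Everything else is a direct transcription of the specialization argument used for invertibility.
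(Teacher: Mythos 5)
Your proof is correct and follows essentially the same route as the paper: the cycle $q'_*\big([\Gammabar].q^*c_i(T_{\cX/T})\big)$ you construct is exactly the class $[\Gammabar]_*\big(c_i(T_{\cX|T})\big)\in\CH(\cX')$ used in the paper's proof, whose special-fibre restriction is $[Z]_*\big(c_i(T_X)\big)$ and whose generic-fibre restriction is $[\Gamma]_*\big(c_i(T_{\cX_\eta})\big)=c_i(T_{\cX'_\eta})$, so the specialization triangle gives the result. You merely spell out the compatibility checks more explicitly than the paper does.
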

  \begin{proof}
    Let once again $\cX$ and $\cX'$ be 
    families as in Proposition \ref{prop:families} and keep the notation of Section
    \ref{ssec:notation+theorem}. Consider the cycle
    $\alpha:=[\Gammabar]_*\big(c_i(T_{\cX|T})\big) \in \CH(\cX')$. Its restriction to the special fibre
    is $[Z]_*\big(c_i (T_{X'})\big)$. Now, one only needs to observe:
    \begin{equation*}
      \alpha_\eta:=r_\eta(\alpha)=[\Gamma]_*\big(c_i(T_{X_\eta|k(\eta)})\big)=c_i(T_{X'_\eta|k(\eta)}).
    \end{equation*}
    Application of the specialization map concludes the proof.
  \end{proof}
For later use, note that this immediately implies:  
 \begin{corollary}\label{cor:[Z]_*(td)=td}
    If $X$ and $X'$ are birational irreducible symplectic varieties, then $[Z]_*\big(\td(X)\big)=\td(X')$.
    \hfill \qedsymbol
  \end{corollary}

\begin{lemma}\label{lem:restr[Z]iso}
  Let  $X$ and $X'$ be birational irreducible symplectic varieties. Then $[Z]_*$ restricts to isomorphisms 
  $[Z]_*:\DCH(X)\overset{\iso}{\too} \DCH(X')$ and $[Z]_*:R(X)\overset{\iso}{\too} R(X')$.
\end{lemma}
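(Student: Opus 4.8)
The plan is to deduce everything formally from the fact that $[Z]_*$ is a \emph{graded} ring isomorphism (Theorem \ref{thm:invertibility}) together with its compatibility with Chern classes (Lemma \ref{lem:[Z]_*(ch(T))=ch(T)}). The first step I would carry out is to record that $[Z]_*$ restricts to an isomorphism on the degree-one parts. Since $[Z]_*$ respects the grading we have $[Z]_*\big(\CH^1_\bQ(X)\big)\subseteq \CH^1_\bQ(X')$; applying the same observation to the inverse $[Z]_*^t$, which is again graded, yields the reverse inclusion $[Z]_*^t\big(\CH^1_\bQ(X')\big)\subseteq \CH^1_\bQ(X)$. Hence $[Z]_*:\CH^1_\bQ(X)\overset{\iso}{\too}\CH^1_\bQ(X')$ is an isomorphism of $\bQ$-vector spaces.

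The second step uses the elementary fact that a ring isomorphism $\phi:A\to B$ carries the subalgebra generated by a subset $S\subseteq A$ onto the subalgebra generated by $\phi(S)\subseteq B$. Applying this to $\phi=[Z]_*$ and $S=\CH^1_\bQ(X)$, and invoking the first step, the image $[Z]_*\big(\DCH(X)\big)$ is the subalgebra of $\CH_\bQ(X')$ generated by $[Z]_*\big(\CH^1_\bQ(X)\big)=\CH^1_\bQ(X')$, which is precisely $\DCH(X')$. As $[Z]_*$ is injective on all of $\CH_\bQ(X)$, its restriction to $\DCH(X)$ is injective too, so $[Z]_*:\DCH(X)\overset{\iso}{\too}\DCH(X')$ is an isomorphism.

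For the subalgebra $R(X)$ I would run the identical argument with the larger generating set $S=\CH^1_\bQ(X)\cup\{c_i(T_X)\}_{i\in\bN}$. By the first step $[Z]_*$ maps $\CH^1_\bQ(X)$ onto $\CH^1_\bQ(X')$, and by Lemma \ref{lem:[Z]_*(ch(T))=ch(T)} it sends each $c_i(T_X)$ to $c_i(T_{X'})$. Thus the image of $S$ is exactly $\CH^1_\bQ(X')\cup\{c_i(T_{X'})\}_{i\in\bN}$, which generates $R(X')$; injectivity is again inherited from that of $[Z]_*$, giving $[Z]_*:R(X)\overset{\iso}{\too}R(X')$.

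I do not anticipate a serious obstacle: the content is entirely formal once Theorem \ref{thm:invertibility} and Lemma \ref{lem:[Z]_*(ch(T))=ch(T)} are available. The only point genuinely requiring the full strength of Theorem \ref{thm:invertibility}, rather than a mere additive isomorphism of Chow groups, is the surjectivity in the first step, i.e. that $[Z]_*$ maps $\CH^1_\bQ(X)$ \emph{onto} $\CH^1_\bQ(X')$; this is precisely what the gradedness of both $[Z]_*$ and $[Z]_*^t$ supplies.
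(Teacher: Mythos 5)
Your argument is correct and is essentially the paper's own proof, merely written out in full: the paper likewise deduces the $\DCH$ statement directly from Theorem \ref{thm:invertibility} (gradedness giving the isomorphism on $\CH^1_\bQ$, and a ring isomorphism carrying generated subalgebras to generated subalgebras) and the $R(X)$ statement from the additional input of Lemma \ref{lem:[Z]_*(ch(T))=ch(T)}. No gaps.
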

\begin{proof}
  The first part is an immediate consequence of Theorem \ref{thm:invertibility}. The second  part follows
  from Lemma \ref{lem:[Z]_*(ch(T))=ch(T)}.
\end{proof}

\begin{proof}[Proof of Theorem \ref{thm:generalize Beauville}]
The isomorphisms of Lemma \ref{lem:restr[Z]iso} complete the following commutative diagram:
  $$\begin{tikzcd}[row sep = 2 ex]
    \DCH(X) \ar{r}{\iso} \ar[hook]{d}
    &\DCH(X')\ar[hook]{d}\\
    R(X) \ar{r}{\iso} \ar[hook]{d}
    &R(X')\ar[hook]{d}\\
    \CH_\bQ(X) \ar{r}{\iso}[swap]{[Z]_*} \ar{dd}[swap]{c_X}
    &\CH_\bQ(X')\ar{dd}{c_{X'}}\\
    & \\
   H^*(X,\bQ) \ar{r}{\iso}[swap]{[Z]^H_*} &H^*(X',\bQ) \ .
  \end{tikzcd}
  $$
  Theorem \ref{thm:generalize Beauville} follows immediately.
\end{proof}


\section{General facts on Chow rings}\label{sec:general facts}
For most of the rest of the article, we will present an alternative proof for the multiplicativity of
$[Z]_*$ in the case of general Mukai flops. In this special case, one can make explicit calculations in
the Chow rings. While it is interesting to see that it can be done, the proof is much
more intricate without the use of families as in Proposition \ref{prop:families}. This indicates that even
for these most fundamental examples of birational transforms between irreducible symplectic varieties, Theorem
\ref{thm:invertibility} is a non-trivial result.

For the convenience of the reader and to fix notations we recall some standard results on Chow rings of
projective bundles and blow-ups in this section. All results can be found in  or deduced from \cite{Fulton}.

\subsection{On the Chow ring of projective bundles} \label{sec:projective bundles I}
This subsection recalls statements on the Chow rings of projective bundles. In particular, we
express the Chern classes of their relative cotangent bundle explicitly. 

Let $S$ be a nonsingular quasi-projective complex variety, $F$ a locally free sheaf on $S$, and
$\pi:\bP(F):= \PProj\big(\Sym(F\dual)\big)\to S$ the natural proper projection. 

\begin{lemma} \label{lem:c_i(Omega)}
		The Chern classes of the relative cotangent bundle of $\bP(F)$ are given by the following formula:
		$$c_i(\Omega_{\bP(F)|S}) =
		(-1)^i\sum_{j=0}^i { \binom{\rank\,(F)-j}{i-j} \pi^*(c_j(F)).\big(c_1(\dO_{\bP(F)} (1))\big)^{i-j}}.$$
\end{lemma}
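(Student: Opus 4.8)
The plan is to derive the formula from the relative Euler sequence together with the splitting principle, the whole computation amounting to determining the Chern classes of a twist of a pulled-back bundle. With the convention $\bP(F)=\PProj(\Sym(F^\vee))$, the tautological surjection $\pi^*F^\vee \too \dO_{\bP(F)}(1)$ dualizes to an inclusion $\dO_{\bP(F)}(-1)\hookrightarrow \pi^*F$ of the tautological subbundle. Writing $Q$ for the quotient and tensoring $0\too \dO_{\bP(F)}(-1)\too \pi^*F\too Q\too 0$ by $\dO_{\bP(F)}(1)$, the identification $T_{\bP(F)|S}=Q\otimes\dO_{\bP(F)}(1)$ yields the relative Euler sequence, whose dual reads
$$0\too \Omega_{\bP(F)|S}\too \pi^*F^\vee\otimes \dO_{\bP(F)}(-1)\too \dO_{\bP(F)}\too 0.$$
Since the total Chern class is multiplicative in short exact sequences and $c(\dO_{\bP(F)})=1$, this gives at once $c(\Omega_{\bP(F)|S})=c(\pi^*F^\vee\otimes \dO_{\bP(F)}(-1))$.

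Next I would record the general twisting formula: for a rank-$r$ bundle $E$ and a line bundle $L$,
$$c_k(E\otimes L)=\sum_{j=0}^k \binom{r-j}{k-j}\, c_j(E)\, c_1(L)^{k-j}.$$
By the splitting principle one may assume $E$ has Chern roots $a_1,\dots,a_r$, so $E\otimes L$ has roots $a_i+c_1(L)$, and the formula becomes the symmetric-function identity $e_k(a_1+t,\dots,a_r+t)=\sum_j \binom{r-j}{k-j} e_j(a)\, t^{k-j}$. This last identity follows by extracting the coefficient of $s^k$ in the expansion $\prod_{i=1}^r\big(1+s(a_i+t)\big)=\sum_{j} e_j(a)\, s^j (1+st)^{\,r-j}$.

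Finally I would apply the twisting formula with $E=\pi^*F^\vee$ and $L=\dO_{\bP(F)}(-1)$. Setting $\xi:=c_1(\dO_{\bP(F)}(1))$, one has $c_1(L)=-\xi$ and, by functoriality of Chern classes under pullback together with $c_j(F^\vee)=(-1)^j c_j(F)$, the identity $c_j(\pi^*F^\vee)=(-1)^j\,\pi^*c_j(F)$. Substituting these in, the sign $(-1)^j$ coming from $c_j(F^\vee)$ and the sign $(-1)^{k-j}$ coming from $c_1(L)^{k-j}$ combine to a global $(-1)^k$, which is exactly the claimed formula with $i=k$.

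I expect no genuine obstacle here: the argument is entirely formal once the Euler sequence is in place. The only point requiring real care is the bookkeeping of conventions — the direction of the Euler sequence and the sign of $c_1(\dO_{\bP(F)}(\pm 1))$ that are forced by the choice $\bP(F)=\PProj(\Sym(F^\vee))$ — since a mistake there would propagate through every sign in the final collection and produce the wrong overall factor.
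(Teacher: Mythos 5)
Your argument is correct and follows essentially the same route as the paper: the relative Euler sequence, multiplicativity of the total Chern class, and the standard twisting formula $c_k(E\otimes L)=\sum_j\binom{r-j}{k-j}c_j(E)c_1(L)^{k-j}$ (which the paper simply cites from \cite[Example 3.2.2]{Fulton} rather than reproving via the splitting principle). The sign bookkeeping at the end, combining $(-1)^j$ from $c_j(F^\vee)$ with $(-1)^{i-j}$ from $c_1(\dO_{\bP(F)}(-1))^{i-j}$ into the global $(-1)^i$, is exactly right.
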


\begin{proof} Use the Euler sequence:
  \begin{equation} \label{eq:Euler-sequence}
    0 \too \Omega_{\bP(F)|S}  \too \pi^*(F\dual)(-1)\too  \dO_{\bP(F)}\too 0,
  \end{equation}
  and multiplicativity of 
  Chern polynomials ($c_t(\cF):=\sum{c_i(\cF) \, t^i}$) to get
  $$ c_t(\Omega_{\bP(F)|S}) = c_t\big(\pi^*(F\dual)\otimes
  \dO_{\bP(F)} (-1)\big). $$
  Then use that for any line bundle $\cL$:
  \begin{equation}\label{eq:c_i(F otimes L)}
    c_i(\cF \otimes \cL) = \sum_{j=0}^i { \binom{\rank\,(\cF)-j}{i-j} c_j(\cF).\big(c_1(\cL)\big)^{i-j} }
  \end{equation}
(see \cite[Example 3.2.2]{Fulton}).
\end{proof}

The following fact will be needed several times.
\begin{lemma} \label{lem:pi_*(h^k)}
	Let still $\pi:\bP(F)\to S$ be a projective bundle over $S$
	 and $\dO_{\bP(F)}(1)$ be its relative $\dO(1)$ with respect to the bundle structure $F$.
	 Then:
	 \begin{equation*}
	 		\pi_*\Big(\big(c_1(\dO_{\bP(F)}(1))\big)^k\Big)=
	 		\begin{cases}
	 				0			&\text{\rm for $k<\rank(F)-1$} \\
	 				1_S			&\text{\rm for $k=\rank(F)-1$}\\
                                        -c_1(F)		&\text{\rm for $k=\rank(F)$}\hspace{1.7 em}.
	 		\end{cases}
	 \end{equation*}
\end{lemma}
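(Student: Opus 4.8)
The plan is to reduce the whole statement to the Grothendieck relation defining $\CH(\bP(F))$ together with the normalization of $\pi_*$ on the fibres. Throughout I write $h:=c_1(\dO_{\bP(F)}(1))$ and $r:=\rank(F)$. First I would dispose of the two easy ranges by a dimension count: since $\pi$ has relative dimension $r-1$ and proper push-forward preserves dimensions of cycles, one has $\pi_*(h^k)\in\CH^{k-r+1}(S)$. For $k<r-1$ the target sits in negative codimension and hence vanishes, which is the first case. For $k=r-1$ the class $\pi_*(h^{r-1})$ lies in $\CH^0(S)=\bZ\cdot 1_S$, and it equals $1_S$ because $h$ restricts to the hyperplane class on each fibre $\bP^{r-1}$; this is precisely the normalization built into the projective bundle theorem (\cite[\S 3.1]{Fulton}), i.e. the statement that the zeroth Segre class is $1$.

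The only case that requires work is $k=r$. Here the key input is the tautological sequence. Because the convention in this article is $\bP(F)=\PProj(\Sym(F\dual))$, the universal rank-one quotient of the Proj construction is a surjection $\pi^*(F\dual)\surj \dO_{\bP(F)}(1)$, whose dual is the tautological inclusion
$$0\too \dO_{\bP(F)}(-1)\too \pi^*F \too Q \too 0,$$
with $Q$ locally free of rank $r-1$. Whitney's formula then gives $c(\pi^*F)=(1-h)\,c(Q)$, hence $c(Q)=c(\pi^*F).(1+h+h^2+\dots)$. Since $\rank(Q)=r-1$ we have $c_r(Q)=0$, and extracting the degree-$r$ component produces the Grothendieck relation
$$h^r+\sum_{j=1}^{r}\pi^*\big(c_j(F)\big).h^{r-j}=0.$$
Applying $\pi_*$ and the projection formula $\pi_*(\pi^*\alpha.\beta)=\alpha.\pi_*(\beta)$, and feeding in the two cases already established, every summand with $j\ge 2$ dies (there $\pi_*(h^{r-j})=0$ as $r-j<r-1$) while the $j=1$ term contributes $c_1(F).\pi_*(h^{r-1})=c_1(F)$. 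Solving for $\pi_*(h^r)$ yields $-c_1(F)$, as claimed.

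I expect the only delicate point to be the bookkeeping forced by the convention $\bP(F)=\PProj(\Sym(F\dual))$: it is precisely the resulting direction of the tautological sequence — an inclusion $\dO_{\bP(F)}(-1)\inj\pi^*F$, giving the factor $(1-h)$ rather than $(1+h)$ — that is responsible for the minus sign in $-c_1(F)$. One should check that this is consistent with the sign conventions already used in the Euler sequence \eqref{eq:Euler-sequence} of Lemma \ref{lem:c_i(Omega)}; indeed, dualizing and twisting the tautological sequence by $\dO_{\bP(F)}(-1)$ recovers exactly $\Omega_{\bP(F)|S}=Q\dual(-1)$, matching \eqref{eq:Euler-sequence}. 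Everything else is the standard projective bundle formalism.
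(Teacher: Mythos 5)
Your proof is correct, but it takes a different route from the paper. The paper disposes of the lemma in two lines by recognizing $\pi_*(h^{r-1+i})$ as the Segre class $s_i(F)$ and citing \cite[Proposition 3.1.(a)]{Fulton} ($s_i=0$ for $i<0$, $s_0=1$) together with Fulton's definition of the total Chern class as the inverse of the total Segre class (which gives $s_1(F)=-c_1(F)$); this is efficient precisely because in Fulton's development Chern classes are \emph{defined} through this push-forward. You instead argue from scratch: a dimension count kills the range $k<r-1$, the fibrewise normalization of the projective bundle theorem handles $k=r-1$, and for $k=r$ you derive the Grothendieck relation $h^r+\sum_{j=1}^r\pi^*(c_j(F)).h^{r-j}=0$ from the tautological sequence $0\to\dO_{\bP(F)}(-1)\to\pi^*F\to Q\to 0$ via Whitney's formula and $c_r(Q)=0$, then apply $\pi_*$ and the projection formula. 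Your version is longer but more self-contained and portable: it works in any theory where Chern classes satisfy Whitney and the projective bundle theorem, and it makes the origin of the minus sign transparent (the factor $(1-h)$ coming from the tautological \emph{sub}bundle under the convention $\bP(F)=\PProj(\Sym(F\dual))$). Your consistency check against the Euler sequence \eqref{eq:Euler-sequence}, via $\Omega_{\bP(F)|S}\iso Q\dual(-1)$, is valid and worth having, since a convention mismatch here would flip the sign of the $k=r$ case. The only caveat is ordering, not correctness: within Fulton's own logical development the Whitney formula is proved \emph{after} the Segre-class facts the paper cites, so your argument is not a shortcut inside that framework, merely an alternative to it.
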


\begin{proof}
  The expression in question yields the Segre classes of $F$. The lemma 
  then follows from basic properties of Segre classes (see \cite[Proposition
  3.1.(a)]{Fulton}), and from the definition of the total Chern class as inverse of the total Segre
  class (\cite[p.\,50]{Fulton}).
\end{proof}

In the later calculation an explicit expression for still another Chern class is needed:

\begin{lemma} \label{lemma:ci(Omega-tensor-O(1))}
	The following equality holds:
	\begin{equation*} 
          c_i\big(\Omega_{\bP(F)|S}\otimes \dO_{\bP(F)}(1)\big)=\sum_{m=0}^i {(-1)^m c_1( \dO_{\bP(F)}(1))^m .\pi^*\big(c_{i-m}(F\dual)\big)}.
	\end{equation*}
\end{lemma}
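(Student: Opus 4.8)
The plan is to prove the identity by reducing it, via the Whitney formula and the twisting lemma \eqref{eq:c_i(F otimes L)}, to the Chern classes of $\Omega_{\bP(F)|S}$ alone, and then to substitute the explicit formula from Lemma \ref{lem:c_i(Omega)}. Writing $h:=c_1(\dO_{\bP(F)}(1))$ for brevity and $r:=\rank(F)$, I would first apply \eqref{eq:c_i(F otimes L)} with $\cF=\Omega_{\bP(F)|S}$ and $\cL=\dO_{\bP(F)}(1)$. Since $\Omega_{\bP(F)|S}$ has rank $r-1$, this gives
$$
c_i\big(\Omega_{\bP(F)|S}\otimes \dO_{\bP(F)}(1)\big)
=\sum_{j=0}^i \binom{(r-1)-j}{i-j}\, c_j(\Omega_{\bP(F)|S})\, h^{\,i-j}.
$$
The strategy is then to insert the formula of Lemma \ref{lem:c_i(Omega)} for $c_j(\Omega_{\bP(F)|S})$ and show that, after collecting terms, the double sum telescopes into the single sum claimed in the statement.

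The key computational step is to expand $c_j(\Omega_{\bP(F)|S})=(-1)^j\sum_{k=0}^j \binom{r-k}{j-k}\pi^*(c_k(F))\,h^{\,j-k}$ inside the previous expression, yielding a double sum over $j$ and $k$. After interchanging the order of summation so that $k$ runs on the outside and $j$ on the inside, the inner sum over $j$ collects a binomial convolution of the form $\sum_j (-1)^j \binom{(r-1)-j}{i-j}\binom{r-k}{j-k}$ multiplying $\pi^*(c_k(F))\,h^{\,i-k}$. I expect this inner sum to evaluate to a single closed-form binomial coefficient (up to sign), which is exactly what is needed to match the right-hand side. The cleanest way to carry this out is to pass to generating functions: since $c_t(\Omega_{\bP(F)|S}\otimes \dO(1))=c_t(\pi^*(F\dual)(-1)\otimes\dO(1))=c_t(\pi^*(F\dual))$ by the Euler sequence \eqref{eq:Euler-sequence} (the two line-bundle twists cancel), one obtains directly
$$
c_i\big(\Omega_{\bP(F)|S}\otimes \dO_{\bP(F)}(1)\big)
= c_i\big(\pi^*(F\dual)\otimes \dO_{\bP(F)}(1)\big),
$$
and applying \eqref{eq:c_i(F otimes L)} to $\pi^*(F\dual)$ (of rank $r$) with $\cL=\dO(1)$ immediately produces $\sum_{m=0}^i \binom{r-(i-m)}{m}h^{\,m}\pi^*(c_{i-m}(F\dual))$.

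The main obstacle will be matching the signs and the binomial coefficients to the precise form stated, $\sum_{m=0}^i (-1)^m h^m \pi^*(c_{i-m}(F\dual))$. This requires reconciling the $\binom{r-(i-m)}{m}$ appearing from \eqref{eq:c_i(F otimes L)} with the bare $(-1)^m$ of the claim; the discrepancy is resolved by the observation that the twist in the Euler sequence forces $\dO(1)\otimes\dO(-1)=\dO$, so the effective twist seen by $\pi^*(F\dual)$ is trivial and the generating-function identity $c_t(\pi^*(F\dual)(-1)\otimes\dO(1))=c_t(\pi^*(F\dual))$ holds, but when one twists back by $\dO(1)$ the coefficient simplifies precisely to $(-1)^m$ after accounting for $c_1(\dO(-1))=-h$. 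I would therefore verify carefully, either by the generating-function route or by the binomial convolution identity (Vandermonde-type), that the coefficient collapses to $(-1)^m$, and treat that reconciliation as the crux of the argument; everything else is bookkeeping that follows formally from Lemmas \ref{lem:c_i(Omega)}, the Euler sequence \eqref{eq:Euler-sequence}, and formula \eqref{eq:c_i(F otimes L)}.
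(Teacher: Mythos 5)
Your first route --- apply \eqref{eq:c_i(F otimes L)} to $\Omega_{\bP(F)|S}$ (of rank $\rank(F)-1$), insert Lemma \ref{lem:c_i(Omega)}, swap the order of summation and evaluate the inner binomial convolution --- is exactly the paper's proof. The inner sum that you only ``expect to evaluate to a single closed-form binomial coefficient'' is precisely what the paper isolates as Claim \ref{claim:binomial identity}: with $r=\rank(F)-1$ one needs $\sum_{j=k}^i(-1)^{j+k}\binom{r-j}{i-j}\binom{r+1-k}{j-k}=(-1)^{i+k}$, after which the signs from $c_k(F\dual)=(-1)^kc_k(F)$ produce the stated $(-1)^m$. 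As written you have left this evaluation --- which is the entire content of the proof on this route --- unproved; it is true, but it must be checked (the paper does so via the recursion $T^r_{i+1,k+1}=T^r_{i,k}$).

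The route you actually commit to as ``cleanest'' contains a genuine error. The Euler sequence gives $c_t(\Omega_{\bP(F)|S})=c_t\big(\pi^*(F\dual)(-1)\big)$, but you may \emph{not} deduce $c_t\big(\Omega_{\bP(F)|S}\otimes\dO_{\bP(F)}(1)\big)=c_t\big(\pi^*(F\dual)(-1)\otimes\dO_{\bP(F)}(1)\big)$: equality of total Chern classes is not preserved by twisting when the ranks differ (here $\rank(F)-1$ versus $\rank(F)$), precisely because formula \eqref{eq:c_i(F otimes L)} depends on the rank. This is why you end up with $\binom{r-(i-m)}{m}$ instead of $(-1)^m$; the discrepancy is not bookkeeping to be ``reconciled'' but the symptom of the mistake, and it already fails for $i=1$, where your expression gives $\pi^*c_1(F\dual)+\rank(F)\,h$ while the lemma asserts $\pi^*c_1(F\dual)-h$. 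The repair is to twist the Euler sequence itself, obtaining $0\to\Omega_{\bP(F)|S}(1)\to\pi^*(F\dual)\to\dO_{\bP(F)}(1)\to0$, so that the Whitney formula yields $c_t\big(\Omega_{\bP(F)|S}(1)\big)\cdot(1+ht)=\pi^*c_t(F\dual)$ and hence $c_t\big(\Omega_{\bP(F)|S}(1)\big)=\pi^*c_t(F\dual)\cdot\sum_{m\ge0}(-1)^mh^mt^m$, whose degree-$i$ coefficient is exactly the lemma. Repaired in this way your generating-function argument is correct and arguably cleaner than the paper's binomial computation; as submitted, however, neither of your two branches is complete.
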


\begin{proof}
  The proof is an application of \eqref{eq:c_i(F otimes L)}, Lemma \ref{lem:c_i(Omega)} and the
  following result on binomial coefficients:
\end{proof}

\begin{claim} \label{claim:binomial identity}
  Fix $r\in \bN$. For any $0\leq k\leq i \leq r$, the following identity holds:
  $$
  \sum_{j=k}^i (-1)^{j+k}\binom{r-j}{i-j}\binom{r+1-k}{j-k}=(-1)^{i+k}.
  $$
\end{claim}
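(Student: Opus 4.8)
The plan is to reduce the identity to a single generating-function computation. First I would perform the index substitution $m = j-k$ and set $s := r-k$ and $p := i-k$; the hypotheses $0\le k\le i\le r$ then guarantee $0\le p\le s$. Since $(-1)^{j+k}=(-1)^m$, $\binom{r-j}{i-j}=\binom{s-m}{p-m}$, $\binom{r+1-k}{j-k}=\binom{s+1}{m}$, and $(-1)^{i+k}=(-1)^p$, the claim becomes
$$
\sum_{m=0}^{p}(-1)^m\binom{s-m}{p-m}\binom{s+1}{m}=(-1)^p \qquad (0\le p\le s).
$$

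Next I would encode the first binomial coefficient as a power-series coefficient. Writing $[x^p]g$ for the coefficient of $x^p$ in a power series $g$, the expansion $x^m(1+x)^{s-m}=\sum_{q\ge 0}\binom{s-m}{q}x^{m+q}$ gives $\binom{s-m}{p-m}=[x^p]\,x^m(1+x)^{s-m}$. Since $\binom{s-m}{p-m}=0$ for $p<m\le s$, I may freely extend the summation range to $s$ and interchange the finite sum with coefficient extraction, obtaining
$$
\sum_{m=0}^{p}(-1)^m\binom{s-m}{p-m}\binom{s+1}{m}
=[x^p]\,(1+x)^{s}\sum_{m=0}^{s}\binom{s+1}{m}\Bigl(\tfrac{-x}{1+x}\Bigr)^{m}.
$$
I would then recognise the sum as a truncated binomial expansion of $\bigl(1-\tfrac{x}{1+x}\bigr)^{s+1}=(1+x)^{-(s+1)}$, so that the whole expression collapses to $[x^p]\,(1+x)^{s}(1+x)^{-(s+1)}=[x^p]\,(1+x)^{-1}=(-1)^p$, which is exactly the desired right-hand side.

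The one point requiring care — and the main, if modest, obstacle — is the bookkeeping of the summation range when passing to the closed form $(1+x)^{-(s+1)}$: the binomial theorem for $(1+y)^{s+1}$ runs over $m=0,\dots,s+1$, whereas our sum stops at $m=s$. I would resolve this by noting that the omitted $m=s+1$ term equals $(-x)^{s+1}(1+x)^{-1}$, whose lowest-order term is $x^{s+1}$ with $s+1>p$, so it contributes nothing to $[x^p]$; hence extending the sum is harmless at the level of the $x^p$-coefficient. Working throughout in the ring of formal power series $\bQ[[x]]$, where $(1+x)^{-1}=\sum_{p\ge 0}(-1)^p x^p$ is a legitimate series, keeps every manipulation rigorous and completes the argument.
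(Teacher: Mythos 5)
Your proof is correct, and it takes a genuinely different route from the paper's. The paper keeps all three parameters, writes $T^r_{i,k}$ for the left-hand side, verifies the boundary case $T^r_{r,k}=(-1)^{r+k}$ directly, and then proves the diagonal shift $T^r_{i+1,k+1}=T^r_{i,k}$ for $i<r$, so that the general case follows by sliding $(i,k)$ up the diagonal to $(r,k+r-i)$; the two computational steps are left to the reader as exercises ``using the relations between binomial coefficients several times.'' You instead normalize away one parameter via $m=j-k$, $s=r-k$, $p=i-k$ and evaluate the resulting sum in closed form by coefficient extraction in $\bQ[[x]]$: the reduction $\binom{s-m}{p-m}=[x^p]\,x^m(1+x)^{s-m}$, the extension of the summation range (harmless since the extra terms vanish), the recognition of the truncated expansion of $\bigl(1-\tfrac{x}{1+x}\bigr)^{s+1}=(1+x)^{-(s+1)}$, and the observation that the omitted $m=s+1$ term has valuation $s+1>p$ are all correct, and the final collapse to $[x^p](1+x)^{-1}=(-1)^p$ gives exactly the claim. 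What each approach buys: the paper's induction needs nothing beyond Pascal-type identities but hides the actual work; your argument is fully self-contained, produces the identity in one stroke, and makes the range bookkeeping (the only delicate point) completely explicit.
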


\begin{proof}
  Introduce a notation for the left hand side:
  $$
  T_{i,k}^r:=\sum_{j=k}^i (-1)^{j+k}\binom{r-j}{i-j}\binom{r+1-k}{j-k}.
  $$

  Then check directly that $T_{r,k}^r= (-1)^{r+k}$. To conclude the proof it is enough to show that for
  $i<r$ the equality $T_{i+1,k+1}^r =T_{i,k}^r$ holds. This can be done by a computation, using the
  relations between binomial coefficients several times.
\end{proof}

\subsection{On the Chow ring of a blow-up} \label{sec:On the Chow ring of a blow-up}
Let $X$ be a nonsingular quasi-projective complex variety. Furthermore, let $P$ be a closed
subvariety of codimension $r$ in $X$, which is also nonsingular. Let the following be the diagram of a blow-up: 
$$\begin{tikzcd}[row sep = tiny, column sep=tiny]
		 &E \ar[hook]{rr}{j} \ar{dd}[swap]{\eta} 	&	& \Xh  \ar{dd}{\phi}  \\
                 &					&\times	& \\
                 &P  \ar[hook]{rr}{i}			&	&X	\ \,.
\end{tikzcd}$$

In this situation the blow-up $\Xh$ of $X$ along $P$ is known to be nonsingular and the exceptional divisor $E$ is isomorphic to $\bP(N_{P|X})$.
Let $\cW:=\eta^*(N_{P|X})/\dO_{\bP(N_{P|X})}(-1)$.  

The following proposition provides the most important facts on the Chow ring of a blow-up. For proof see \cite[Proposition 6.7]{Fulton}. 
\begin{proposition} \label{prop:blow-up_Fulton}
With the notation introduced above:
		\begin{compactenum}[(a)]
				\item \label{part a} {
					(Key Formula). For all $\gamma \in \CH(P)$,
					$$\phi^*i_*(\gamma)=j_*\big(c_{r-1}(\cW).\eta^*(\gamma)\big)$$ 
					in $\CH(\hat{X})$.
				}
				\item	\label{part b}{				
					For all $\alpha \in \CH(X)$, $\phi_*\phi^*(\alpha)=\alpha$.
				}
				\item	\label{part c}{
					If $\eps\in \CH(E)$ and $\eta_*(\eps)=0 = j^*j_*(\eps)$, then $\eps=0$.
				}
				\item	\label{part e}{
					There is a split exact sequence of abelian groups:
					$$\begin{tikzcd}
							&0	\ar{r}	&\CH(P) \ar{r}{f} &\CH(E)\oplus\CH(X) \ar{r}{g} 	&\CH(\Xh) \ar{r}	& 0					
                                        \end{tikzcd}$$
					with $f(\gamma)=\big(c_{r-1}(\cW).\eta^*(\gamma), - i_*(\gamma)\big)$ and $g(\eps,\alpha)=j_*(\eps)+\phi^*(\alpha)$. 
					A left inverse for $f$ is given by $(\eps,\alpha) \mapsto \eta_*(\eps)$.
				}
		\end{compactenum}
\end{proposition}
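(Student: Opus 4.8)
The plan is to prove the four assertions in the order (\ref{part b}), (\ref{part a}), (\ref{part c}) and finally the split exact sequence (\ref{part e}), which will be assembled from the first three together with the localisation sequence. Assertion (\ref{part b}) is immediate: $\phi$ is proper and birational, so $\phi_*[\hat X]=[X]$, and the projection formula gives $\phi_*\phi^*(\alpha)=\alpha.\phi_*[\hat X]=\alpha$.

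Next I would establish the Key Formula (\ref{part a}), which is the crux of the whole proposition. The blow-up square relating $j:E\to\hat X$ to $i:P\to X$ is a fibre square in which $i$ is a regular embedding of codimension $r$ while $j$ is a divisor; it is therefore not transverse, and the excess intersection formula of \cite[Chapter 6]{Fulton} applies, yielding $\phi^*i_*(\gamma)=j_*\big(c_{r-1}(\mathcal E).\eta^*(\gamma)\big)$ with $\mathcal E:=\eta^*(N_{P|X})/N_{E|\hat X}$ the excess normal bundle of rank $r-1$. The only remaining point is to identify $\mathcal E$ with $\cW$, and this is the standard computation that the normal bundle of the exceptional divisor is $N_{E|\hat X}=\dO_{\bP(N_{P|X})}(-1)$; substituting gives $\mathcal E=\cW$ by definition.

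For (\ref{part c}) I would use the projective bundle structure $\eta:E=\bP(N_{P|X})\to P$. Set $\zeta:=c_1(\dO_{\bP(N_{P|X})}(1))$; the projective bundle theorem presents $\CH(E)$ as a free $\CH(P)$-module with basis $1,\zeta,\dotsc,\zeta^{r-1}$, and Lemma \ref{lem:pi_*(h^k)} gives $\eta_*(\zeta^k)=0$ for $k<r-1$ and $\eta_*(\zeta^{r-1})=1_P$. The self-intersection formula together with $N_{E|\hat X}=\dO_{\bP(N_{P|X})}(-1)$ gives $j^*j_*(\eps)=c_1(N_{E|\hat X}).\eps=-\zeta.\eps$. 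Writing $\eps=\sum_{k=0}^{r-1}\zeta^k.\eta^*(a_k)$, the hypothesis $\eta_*(\eps)=0$ forces $a_{r-1}=0$, whereupon $j^*j_*(\eps)=0$ becomes $\sum_{k=0}^{r-2}\zeta^{k+1}.\eta^*(a_k)=0$; as these are distinct basis elements the remaining $a_k$ all vanish, so $\eps=0$.

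Finally I would assemble the split exact sequence (\ref{part e}). Injectivity of $f$ and the splitting follow from the proposed left inverse $(\eps,\alpha)\mapsto\eta_*(\eps)$: indeed $\eta_*\big(c_{r-1}(\cW).\eta^*(\gamma)\big)=\big(\eta_*c_{r-1}(\cW)\big).\gamma=\gamma$, where $\eta_*c_{r-1}(\cW)=1_P$ is read off from $c(\cW)=\eta^*c(N_{P|X}).(1-\zeta)^{-1}$ and Lemma \ref{lem:pi_*(h^k)}. Surjectivity of $g$ comes from the localisation sequence: since $\hat X\setminus E\iso X\setminus P$, any $\beta\in\CH(\hat X)$ becomes, after subtracting $\phi^*(\alpha)$ for a lift $\alpha$ of its restriction to $X\setminus P$, a class restricting to zero on $\hat X\setminus E$, hence lies in the image of $j_*$. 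That the image of $f$ lies in $\ker g$ is precisely the Key Formula, since $g(f(\gamma))=j_*\big(c_{r-1}(\cW).\eta^*(\gamma)\big)-\phi^*i_*(\gamma)=0$. Conversely, if $j_*(\eps)+\phi^*(\alpha)=0$, applying $\phi_*$ and using $\phi\circ j=i\circ\eta$ and (\ref{part b}) gives $\alpha=-i_*(\eta_*\eps)$, so $\gamma:=\eta_*(\eps)$ reproduces the second component of $f(\gamma)$; the difference $\eps':=\eps-c_{r-1}(\cW).\eta^*(\gamma)$ then satisfies $\eta_*(\eps')=0$ and, again by the Key Formula, $j_*(\eps')=0$, so that (\ref{part c}) forces $\eps'=0$ and $(\eps,\alpha)=f(\gamma)$. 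The main obstacle is the Key Formula (\ref{part a}); once the excess normal bundle has been correctly identified with $\cW$, the remaining assertions are formal.
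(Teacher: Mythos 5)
Your proof is correct. The paper itself does not prove this proposition but simply cites \cite[Proposition 6.7]{Fulton}, and your argument --- excess intersection for the Key Formula with the identification $N_{E|\Xh}\iso\dO_{\bP(N_{P|X})}(-1)$ of the excess bundle with $\cW$, the projection formula for (\ref{part b}), the projective bundle theorem for (\ref{part c}), and the localisation sequence plus (\ref{part a})--(\ref{part c}) to assemble (\ref{part e}) --- is essentially the standard proof given there.
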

Furthermore, the following lemmas will be used in later proofs.
\begin{lemma} \label{lem:eta(c_r-1(W))}
		One has $\eta_*\big(c_{r-1}(\cW)\big)=1_P$.
\end{lemma}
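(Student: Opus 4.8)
The plan is to express $c_{r-1}(\cW)$ in terms of powers of the relative hyperplane class and then invoke the projective-bundle pushforward recorded in Lemma \ref{lem:pi_*(h^k)}. Write $F := N_{P|X}$, a locally free sheaf of rank $r$ on $P$, so that $E = \bP(F)$, the map $\eta$ is the structure morphism of this projective bundle, and $\cW$ has rank $r-1$ by construction. Set $\xi := c_1\big(\dO_{\bP(F)}(1)\big)$.

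By the defining relation for $\cW$ there is a tautological short exact sequence
$$0 \too \dO_{\bP(F)}(-1) \too \eta^*F \too \cW \too 0$$
on $E$. First I would apply multiplicativity of the total Chern class to obtain $c(\cW) = \eta^*c(F)\cdot c\big(\dO_{\bP(F)}(-1)\big)^{-1} = \eta^*c(F)\cdot(1+\xi+\xi^2+\cdots)$, using $c_1\big(\dO_{\bP(F)}(-1)\big) = -\xi$. Extracting the degree-$(r-1)$ component, which is the top Chern class since $\rank \cW = r-1$, yields
$$c_{r-1}(\cW) = \sum_{j=0}^{r-1} \eta^*\big(c_j(F)\big)\,\xi^{\,r-1-j}.$$

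Next I would push forward termwise and use the projection formula to pull the classes $c_j(F)$ out of $\eta_*$:
$$\eta_*\big(c_{r-1}(\cW)\big) = \sum_{j=0}^{r-1} c_j(F)\cdot \eta_*\big(\xi^{\,r-1-j}\big).$$
By Lemma \ref{lem:pi_*(h^k)} the pushforward $\eta_*(\xi^k)$ vanishes for $k < r-1$ and equals $1_P$ for $k = r-1$. Since $r-1-j \le r-1$ with equality only when $j=0$, the single surviving summand is the one with $j=0$, contributing $c_0(F)\cdot 1_P = 1_P$, which is the claimed identity.

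I do not expect a genuine obstacle here: once the tautological sequence is in place the argument is a direct Chern-class computation feeding into an already-proved lemma. The only point demanding care is bookkeeping of conventions, namely matching the Grothendieck convention $\bP(F) = \PProj\big(\Sym(F\dual)\big)$ used for $E$ with the sign $c_1\big(\dO_{\bP(F)}(-1)\big) = -\xi$ and with the normalization $\eta_*(\xi^{r-1}) = 1_P$ of Lemma \ref{lem:pi_*(h^k)}, where $\rank(F) = r$ so that $\rank(F)-1 = r-1$.
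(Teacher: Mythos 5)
Your proof is correct, and the final step coincides with the paper's: both reduce the claim to the same explicit expansion of $c_{r-1}(\cW)$ in powers of $\xi=c_1(\dO_{\bP(N_{P|X})}(1))$ and then apply the pushforward formula of Lemma \ref{lem:pi_*(h^k)}, where only the top power $\xi^{r-1}$ survives and contributes $c_0(N_{P|X})\cdot 1_P=1_P$. The difference lies in how that expansion is obtained. You derive it directly from the tautological sequence $0 \to \dO_{\bP(N_{P|X})}(-1) \to \eta^*N_{P|X} \to \cW \to 0$ via the Whitney formula, expanding $c(\dO(-1))^{-1}=(1-\xi)^{-1}$ as a geometric series. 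The paper instead identifies $\cW \iso \big(\Omega_{\bP(N_{P|X})|P}\otimes \dO_{\bP(N_{P|X})}(1)\big)\dual$ and invokes its Lemma \ref{lemma:ci(Omega-tensor-O(1))} to get the same formula \eqref{eq:c_r-1(W)} (the two agree after the substitution $j=r-1-m$ and cancelling the signs coming from dualizing $N_{P|X}$). Your route is the more elementary and self-contained one, bypassing the tensor-by-a-line-bundle Chern class formula and the binomial identity of Claim \ref{claim:binomial identity} that underlie Lemma \ref{lemma:ci(Omega-tensor-O(1))}; the paper's route costs nothing extra in context because that lemma is needed elsewhere in Section \ref{sec:multiplicativity} anyway. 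Your attention to the rank bookkeeping ($\rank N_{P|X}=r$, so $\rank\cW=r-1$ and $\eta_*(\xi^{r-1})=1_P$) is exactly the point that needs care, and you handle it correctly.
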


\begin{proof}
  Applying Lemma \ref{lemma:ci(Omega-tensor-O(1))} to $\cW \iso \big(\Omega_{\bP(N_{P|X})|P} \otimes
  \dO_{\bP(N_{P|X})} (1)\big)\dual $ yields the equality
	\begin{equation} \label{eq:c_r-1(W)}
			c_{r-1}(\cW)=(-1)^{r-1} \ \sum_{m=0}^{r-1} {(-1)^m \Big(
                          c_1\big(\dO_{\bP(N_{P|X})}(1)\big)\Big)^m
                          .\eta^*\big(c_{r-1-m}(N_{P|X}\dual)\big)} \,.
	\end{equation}
The result thus follows by Lemma \ref{lem:pi_*(h^k)}.
\end{proof}

\begin{lemma} \label{lem:existence of delta_alpha}
Let $\hat{\alpha} \in \CH(\Xh)$ with $\phi_*(\hat{\alpha})=0$. Then there exists a unique element $\eps\in\CH(E)$ satisfying $\hat{\alpha}=j_*(\eps)$ and $\eta_*(\eps)=0$. 
\end{lemma}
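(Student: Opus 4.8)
The plan is to use the split exact sequence from Proposition \ref{prop:blow-up_Fulton}\eqref{part e}, which gives a very explicit handle on $\CH(\Xh)$. Since $\hat{\alpha}\in\CH(\Xh)$ and $g$ is surjective, I can write $\hat{\alpha}=g(\eps',\alpha')=j_*(\eps')+\phi^*(\alpha')$ for some $(\eps',\alpha')\in\CH(E)\oplus\CH(X)$. The goal is to adjust this representative so that the $\CH(X)$-component vanishes and the remaining class $\eps$ satisfies $\eta_*(\eps)=0$, then to verify uniqueness.

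First I would determine $\alpha'$ using the hypothesis $\phi_*(\hat{\alpha})=0$. Applying $\phi_*$ to $\hat{\alpha}=j_*(\eps')+\phi^*(\alpha')$ and using the projection formula together with part \eqref{part b} ($\phi_*\phi^*=\id$), I get $0=\phi_*j_*(\eps')+\alpha'$. Since $\phi\circ j=i\circ\eta$, functoriality of proper push-forward gives $\phi_*j_*(\eps')=i_*\eta_*(\eps')$, so $\alpha'=-i_*\eta_*(\eps')$. Substituting back, $\hat{\alpha}=j_*(\eps')-\phi^*i_*\eta_*(\eps')$. Now I apply the Key Formula \eqref{part a} to the class $\eta_*(\eps')\in\CH(P)$, which rewrites $\phi^*i_*(\eta_*(\eps'))$ as $j_*\big(c_{r-1}(\cW).\eta^*\eta_*(\eps')\big)$. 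This lets me collect both terms under $j_*$: setting
$$
\eps:=\eps'-c_{r-1}(\cW).\eta^*\big(\eta_*(\eps')\big),
$$
I obtain $\hat{\alpha}=j_*(\eps)$, so $\eps$ realizes the desired expression. It then remains to check $\eta_*(\eps)=0$: applying $\eta_*$ and using the projection formula $\eta_*\big(c_{r-1}(\cW).\eta^*(\eta_*\eps')\big)=\eta_*(c_{r-1}(\cW)).\eta_*(\eps')$ together with Lemma \ref{lem:eta(c_r-1(W))} ($\eta_*(c_{r-1}(\cW))=1_P$), the two terms cancel and indeed $\eta_*(\eps)=0$.

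For uniqueness, suppose $\eps_1,\eps_2$ both satisfy the two conditions. Their difference $\delta:=\eps_1-\eps_2$ satisfies $j_*(\delta)=0$ and $\eta_*(\delta)=0$. The natural tool here is part \eqref{part c}, which says $\delta=0$ provided $\eta_*(\delta)=0$ and $j^*j_*(\delta)=0$; the latter holds automatically since $j_*(\delta)=0$. Hence $\delta=0$, giving uniqueness.

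I do not expect a serious obstacle here: the statement is essentially a reorganization of the exact sequence in Proposition \ref{prop:blow-up_Fulton} together with the Key Formula and Lemma \ref{lem:eta(c_r-1(W))}. The only point requiring a little care is the bookkeeping of push-forward and projection-formula identities (in particular the compatibility $\phi_*j_*=i_*\eta_*$ coming from $\phi j=i\eta$), but these are standard functorial properties of intersection theory valid in the algebraic-space setting recalled in Section \ref{ssec:inttheo-sp}.
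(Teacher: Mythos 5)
Your proof is correct and follows essentially the same route as the paper's: decompose $\hat{\alpha}=j_*(\eps')+\phi^*(\alpha')$ via Proposition \ref{prop:blow-up_Fulton}(d), use $\phi_*(\hat\alpha)=0$ and $\phi_*j_*=i_*\eta_*$ to identify $\alpha'=-i_*\eta_*(\eps')$, absorb $\phi^*(\alpha')$ into $j_*$ via the Key Formula, correct $\eps'$ by $c_{r-1}(\cW).\eta^*\eta_*(\eps')$, and verify $\eta_*(\eps)=0$ with the projection formula and Lemma \ref{lem:eta(c_r-1(W))}, with uniqueness from part (c). The only difference is presentational: you spell out explicitly that $j_*(\delta)=0$ forces $j^*j_*(\delta)=0$ in the uniqueness step, which the paper leaves implicit.
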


\begin{proof}
	By Proposition \ref{prop:blow-up_Fulton}.\ref{part e} there exist elements $\eps'\in \CH(E)$ and $\alpha\in \CH(X)$ with $\hat{\alpha}=j_*\eps'+\phi^*\alpha$.
	Applying $\phi_*$, one obtains:	
	$\phi_*(\hat{\alpha})= \phi_*(j_*\eps'+\phi^*\alpha)= \phi_*j_*\eps'+\phi_*\phi^*\alpha 
	\overset{\text{\rm \ref{prop:blow-up_Fulton}.\ref{part b}}}{=}i_*\eta_*\eps'+\alpha
	$. 

	By assumption, $\phi_*(\hat{\alpha})=0$ and therefore $\alpha=  i_*\eta_*(-\eps')$.
	One obtains:
	\begin{equation} \label{bla}
		\phi^*(\alpha)=\phi^* i_*\eta_*(-\eps')
			\overset{\text{\rm \ref{prop:blow-up_Fulton}.\ref{part a}}}{=}  j_*\Big(c_{r-1}(\cW).\eta^* \big(\eta_*(-\eps')\big)\Big).
	\end{equation}
		
	Set $\eps:=\eps'-\big(c_{r-1}(\cW).\eta^* (\eta_*(\eps'))\big)$. Then
	$\hat{\alpha}=j_*\eps'+\phi^*\alpha
	\overset{\text{\rm (\ref{bla})}}{=} j_*(\eps)
	$
	and
	$$
	\eta_*(\eps)
	\overset{\text{\rm (PF)}}{=} \eta_*(\eps')-\eta_*\big(c_{r-1}(\cW)\big).\eta_*(\eps')
	\overset{\text{\rm \ref{lem:eta(c_r-1(W))}}}{=} 
0,
	$$
        where ``(PF)'' denotes application of the projection formula, as it will always do in this article.
	Proposition \ref{prop:blow-up_Fulton}.\ref{part c} gives the uniqueness of $\eps$.
\end{proof}

\section{Multiplicativity of $[Z]_*$ for general Mukai flops} \label{sec:multiplicativity} 
The aim of this section is to prove in a more explicit way that for a general Mukai flop the map $[Z]_*$
is multiplicative (see Proposition \ref{prop:multiplicativity}).
In the first part of this section, we will briefly recall the construction of a Mukai flop in order to
fix the notation. After further preparation, we will
present the alternative proof of Proposition \ref{prop:multiplicativity} in Section
\ref{ssec:proof:multiplicativity}.

\subsection{Mukai flops: Notation and basic facts} \label{ssec:Mukai flops}
In this subsection we outline the construction of a Mukai
flop (as introduced in \cite{Mukai}) in order to
fix notation for the rest of the article. Furthermore we recall some standard facts for future reference. 

Let $X$ be an irreducible symplectic variety and $\sigma$ a non-degenerated two-form generating
$H^0(X,\Omega_X^2)$. 
Let $P \subseteq X$ be a nonsingular closed subvariety of codimension $r$, which is a $\bP^r$-bundle,
i.e. $P\iso \bP(F)\overset{\pi}{\longrightarrow} S$ for some nonsingular complex projective variety $S$ and a vector bundle $F$ of
rank $r+1$ on $S$.

   Define $\Xh$ as the blow-up of $X$ along $P$. Let $E$ denote its exceptional divisor. Set
   furthermore $P':= \bP(F\dual)$.

   Since in this situation $\Omega_{P|S}\iso N_{P|X}$, there is an isomorphism between $E$ and $\bP(\Omega_{P|S})$
   which on the other hand is isomorphic to the incidence variety $W:=\{(l,\lambda)|\,l\in \lambda \} 
         \subseteq \bP(F)\times_S \bP(F\dual)$.
   Clearly:
   \begin{lemma} \label{lem:Mukai-flop2}
     \
     \begin{compactenum}[(1)]
       \item Via the isomorphism $\bP(\Omega_{\bP(F)|S})\iso W$, the bundle \label{it:O_Omega(1) iso O(1,1)}
         $\dO_{\bP(\Omega_{\bP(F)|S})}(1)$ corresponds to  $$\pr_P^*\big(\dO_{\bP(F)}(1)\big)\otimes \pr_{P'}^*\big(\dO_{\bP(F\dual)}(1)\big).$$
       \item The normal bundle $N_{E|\Xh}$ corresponds to $\pr_P^*\big(\dO_{\bP(F)}(-1)\big)\otimes
         \pr_{P'}^*\big(\dO_{\bP(F\dual)}(-1)\big)$ via $E \iso W$. \label{it:N iso O(-1,-1)}
     \end{compactenum}
   \end{lemma}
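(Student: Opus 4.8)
The plan is to verify both statements of Lemma \ref{lem:Mukai-flop2} by tracing through the two explicit identifications $\bP(\Omega_{\bP(F)|S})\iso E\iso W$ and then computing the relevant line bundles on each side. The identification $E\iso\bP(N_{P|X})=\bP(\Omega_{P|S})$ is the standard description of the exceptional divisor of a blow-up along a smooth center, combined with the symplectic isomorphism $N_{P|X}\iso\Omega_{P|S}$ induced by the two-form $\sigma$ (this is where the hyperkähler hypothesis enters). The identification $\bP(\Omega_{\bP(F)|S})\iso W$ with the incidence variety is the geometric heart of the matter and is what I would set up first.

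For part \eqref{it:O_Omega(1) iso O(1,1)}, I would make the incidence identification explicit. A point of $W$ is a pair $(l,\lambda)$ with a line $l\subseteq F_s$ and a hyperplane $\lambda\subseteq F_s$ (equivalently a line in $F\dual_s$) satisfying $l\subseteq\lambda$; since $\Omega_{\bP(F)|S}$ restricted to the fibre over $[l]$ is canonically $\Hom(l,F_s/l)$ by the Euler sequence \eqref{eq:Euler-sequence}, a point of $\bP(\Omega_{\bP(F)|S})$ over $[l]$ is a line in $\Hom(l,F_s/l)$, and the incidence condition $l\subseteq\lambda$ picks out exactly the hyperplane $\lambda$ whose image in $F_s/l$ is the kernel direction. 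I would then compute the pullback of $\dO_{\bP(\Omega)}(1)$ under this identification; the cleanest way is to use the Euler sequence for $\bP(F)$ to rewrite $\dO_{\bP(\Omega_{\bP(F)|S})}(1)$ tautologically in terms of $\dO_{\bP(F)}(1)$ and the tautological bundle on the $\bP(F\dual)$-factor, and to check directly that it matches $\pr_P^*\dO_{\bP(F)}(1)\otimes\pr_{P'}^*\dO_{\bP(F\dual)}(1)$. Since this is a bundle identity that can be checked fibrewise over $S$ and then over $\bP(F)$, comparing the tautological subbundles on each factor suffices.

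For part \eqref{it:N iso O(-1,-1)}, I would use the standard fact that for the exceptional divisor $E=\bP(N_{P|X})$ of a blow-up one has $N_{E|\Xh}\iso\dO_E(-1)=\dO_{\bP(N_{P|X})}(-1)$, the tautological subbundle with respect to the bundle structure $N_{P|X}=\Omega_{P|S}$. Thus $N_{E|\Xh}\iso\dO_{\bP(\Omega_{\bP(F)|S})}(-1)$, which is the dual of the bundle computed in part \eqref{it:O_Omega(1) iso O(1,1)}, and transporting along $E\iso W$ gives the asserted $\pr_P^*\dO_{\bP(F)}(-1)\otimes\pr_{P'}^*\dO_{\bP(F\dual)}(-1)$. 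This reduces \eqref{it:N iso O(-1,-1)} to \eqref{it:O_Omega(1) iso O(1,1)} together with the blow-up formula for the normal bundle of the exceptional divisor.

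I expect the main obstacle to be the careful bookkeeping in part \eqref{it:O_Omega(1) iso O(1,1)}: writing the incidence isomorphism $\bP(\Omega_{\bP(F)|S})\iso W$ precisely enough that the comparison of tautological line bundles becomes a clean identity of subbundles rather than an equality only up to a twist. The convention $\bP(F)=\PProj(\Sym(F\dual))$ fixed before Lemma \ref{lem:c_i(Omega)} must be tracked consistently so that the symmetric roles of $F$ and $F\dual$ produce the symmetric $\dO(1,1)$ rather than, say, $\dO(1,-1)$; once the Euler sequence is invoked on both factors and restricted to a fibre, the identification should fall out, but getting the signs and the duality right is the delicate point.
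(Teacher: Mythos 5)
The paper itself offers no argument for this lemma (it is introduced with ``Clearly:''), so there is nothing to compare against line by line; your proposal supplies the standard proof, and its overall architecture is right: identify $\bP(\Omega_{\bP(F)|S})$ with the incidence variety $W$ pointwise via the Euler sequence, match tautological subbundles to get part (1), and reduce part (2) to part (1) via the blow-up identity $N_{E|\Xh}\iso \dO_E(-1)=\dO_{\bP(N_{P|X})}(-1)$ together with $N_{P|X}\iso\Omega_{P|S}$.

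One duality slip needs fixing, and it is exactly the kind you warned yourself about. The fibre of $\Omega_{\bP(F)|S}$ over $[l]$ is $\Hom(F_s/l,\,l)$, not $\Hom(l,\,F_s/l)$ as you wrote; the latter is the fibre of the relative \emph{tangent} bundle, and projectivizing it would identify $E$ with the incidence variety of pairs (line, $2$-plane) rather than (line, hyperplane) --- a nonzero $\psi\colon l\to F_s/l$ is determined by its image, not by a kernel hyperplane. With the correct fibre $\Hom(F_s/l,\,l)$, a nonzero $\phi$ up to scalar is determined by the hyperplane $\lambda\subseteq F_s$ containing $l$ that is the preimage of $\ker\phi$, and the tautological line at $(l,\lambda)$ is
$$\Hom\big(F_s/\lambda,\ l\big)\;=\;(F_s/\lambda)\dual\otimes l\;=\;\lambda^{\perp}\otimes l,$$
which is literally the fibre of $\pr_{P'}^*\dO_{\bP(F\dual)}(-1)\otimes \pr_P^*\dO_{\bP(F)}(-1)$ under the convention $\bP(F)=\PProj(\Sym(F\dual))$. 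Dualizing gives part (1), and part (2) follows as you say. Your reference to ``the kernel direction'' shows you had the correct (cotangent) picture in mind, so this is a notational correction rather than a gap in the idea; with it made, the proof is complete.
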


   Since the situation is symmetric in $P$ and $P'$ one can apply \cite[Corollary 6.11]{Artin:blow-down} to see that a
   blow-down of $\Xh$ along $\bP(\Omega_{\bP(F\dual)|S})\to P'$ exists in the category of algebraic spaces.

   \begin{definition} \label{def:Mukai flop}
     If $X'$ is once again a projective variety, $X \leftarrow \Xh \to  X'$ is called a {\it general
       Mukai flop} or just {\it Mukai flop}. 
     In the special case where $P\iso \bP^n$, the triple $X \leftarrow \Xh \to  X'$ is called {\it elementary
       Mukai flop}.
   \end{definition} 
  
   In this situation $X'$ is automatically nonsingular (cf. \cite{Nakano-71} and
   \cite{Fuj-Nak-72}). Furthermore, $X'$ is again an irreducible symplectic variety and has a unique symplectic structure, which coincides
   with $\sigma$ outside $P'$.

   \begin{remark} \label{rem:4-dim connected by MF} 
     If dim$(X)=4$, elementary Mukai flops play a
     particularly important role: Any birational transform between four-dimensional irreducible
     symplectic varieties is a finite composition of elementary Mukai flops. This is a consequence of
     \cite[Theorem 2]{Wierzba}.
   \end{remark}

Throughout the rest of the article denote the natural maps as in the following diagram:
$$\begin{tikzcd}[row sep=small, column sep=tiny]
  && & &E \ar[hook]{dd}{j} \ar{dddllll}[swap]{\eta} \ar{dddrrrr}{\eta'}
  &			&		&&	\\
  &&&&&&&&				\\
  && & & \Xh \ar{dl}[swap]{\phi} \ar{dr}{\phi'}
  &			&		&&	\\
  P \ar[hook]{rrr}[swap]{i} && &X & &X' \ar[hookleftarrow]{rrr}[swap]{i'} & &&P' \, .
\end{tikzcd}$$

\begin{definition}
	Fix the following notations:
	\begin{alignat*}{3}
		&h:=c_1(\dO_{\bP(F)}(1))\hspace{5 mm} & &\in \CH(P) \ &&\big(=\CH(\bP(F))\big),	\\
		&l:=c_1(\dO_{\bP(F\dual)}(1)) & &\in \CH(P') \ &&\big(=\CH(\bP(F\dual))\big),	\\
		&H:= \eta^* (h) 		&& \in \CH(E),&&\\
		&L:=\eta'^*(l) 			&& \in \CH(E).&&
	\end{alignat*}
\end{definition}

Since the Chow ring $\CH(P)$ of the projective bundle $P\overset{\pi}{\too}S$ turns via $\pi^*$ into a free $\CH(S)$-module with
basis $1,h,..., h^r$, we can define:
\begin{definition} \label{def:sigma_k}
Let $\alpha\in \CH(X)$ and consider $i^*(\alpha)\in \CH(P)$. 
For $k=0,1,\dotso, r$ define  $\sigma_{k}^\alpha \in \CH(S)$ as the unique elements such that $$i^*(\alpha)=\sum_{k=0}^r{\pi^* (\sigma_{k}^\alpha) h^k}.$$ 
\end{definition}

For a general Mukai flop the families as in Proposition \ref{prop:families} can be chosen such that
\begin{equation}
  \label{eq:Z=}
  Z=\Xh \ \cup \ P\times_S P'.
\end{equation}
This follows from the proof of \cite[Theorem 3.4]{Huybrechts:birHK_deformations}.
From now on we will work with this $Z$.

We can now state the following proposition, which is part of the statement of Theorem
\ref{thm:invertibility}:
\begin{proposition} \label{prop:multiplicativity} 
  Let $X\leftarrow \Xh \to X'$ be a general Mukai
  flop. Then the map $[Z]_*: \CH(X) \to \CH(X')$ (as in Definition \ref{def:[Z]_*}) is multiplicative.
\end{proposition}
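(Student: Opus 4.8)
The plan is to decompose the correspondence according to the two components of $Z$ in \eqref{eq:Z=} and then to verify multiplicativity after pulling back along $\phi'$, which is injective. Write $[Z]_*=[\Xh]_*+[P\times_S P']_*$. Since $\Xh\hookrightarrow X\times X'$ via $(\phi,\phi')$, the first summand is $[\Xh]_*(\alpha)=\phi'_*\phi^*(\alpha)$. For the second, embed $P\times_S P'$ via $(i\circ p,\,i'\circ p')$, where $p,p'$ are the projections of $P\times_S P'$ onto $P$ and $P'$; the projection formula gives $[P\times_S P']_*(\alpha)=i'_*p'_*p^*i^*(\alpha)$. As $P\times_S P'$ is the fibre product of the projective bundles $\pi\colon P\to S$ and $\pi'\colon P'\to S$, flat base change (\cite[Proposition 1.7]{Fulton}) yields $p'_*p^*=\pi'^*\pi_*$, and by Lemma~\ref{lem:pi_*(h^k)} together with Definition~\ref{def:sigma_k} only the top power $h^r$ survives, so $\pi_*i^*(\alpha)=\sigma_r^\alpha$. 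Hence
$$ [Z]_*(\alpha)=\phi'_*\phi^*(\alpha)+i'_*\pi'^*(\sigma_r^\alpha). $$

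Because $\phi'_*\phi'^*=\id$ (Proposition~\ref{prop:blow-up_Fulton}.\ref{part b}), the ring homomorphism $\phi'^*$ is injective, so it suffices to prove $\phi'^*[Z]_*(\alpha)\cdot\phi'^*[Z]_*(\beta)=\phi'^*[Z]_*(\alpha\beta)$. I would next establish that $\phi'^*[Z]_*(\alpha)=\phi^*(\alpha)+j_*(c_\alpha)$ for an explicit $c_\alpha\in\CH(E)$. The Key Formula (Proposition~\ref{prop:blow-up_Fulton}.\ref{part a}) applied to the blow-down $\phi'$ gives $\phi'^*i'_*\pi'^*(\sigma_r^\alpha)=j_*\big(c_{r-1}(\cW')\cdot\eta'^*\pi'^*(\sigma_r^\alpha)\big)$, where $\cW'$ is the analogue of $\cW$ for $\phi'$. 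For the blow-up term, $\phi^*(\alpha)-\phi'^*\phi'_*\phi^*(\alpha)$ lies in $\ker\phi'_*$, so by Lemma~\ref{lem:existence of delta_alpha} it equals $j_*(\eps'_\alpha)$ for a unique $\eps'_\alpha$ with $\eta'_*(\eps'_\alpha)=0$; thus $\phi'^*\phi'_*\phi^*(\alpha)=\phi^*(\alpha)-j_*(\eps'_\alpha)$. Altogether $c_\alpha=c_{r-1}(\cW')\cdot\eta'^*\pi'^*(\sigma_r^\alpha)-\eps'_\alpha$.

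Substituting $\phi'^*[Z]_*(\alpha)=\phi^*(\alpha)+j_*(c_\alpha)$ into the product and expanding, the cross terms simplify by the projection formula, using $j^*\phi^*=\eta^*i^*$ to get $\phi^*(\alpha)\cdot j_*(c_\beta)=j_*(\eta^*i^*(\alpha)\cdot c_\beta)$, while $j_*(c_\alpha)\cdot j_*(c_\beta)=j_*\big(c_\alpha\cdot j^*j_*(c_\beta)\big)$ is handled by the self-intersection formula $j^*j_*=c_1(N_{E|\Xh})\cdot(-)$ with $c_1(N_{E|\Xh})=-(H+L)$ from Lemma~\ref{lem:Mukai-flop2}.\ref{it:N iso O(-1,-1)}. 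After cancelling $\phi^*(\alpha\beta)=\phi^*(\alpha)\cdot\phi^*(\beta)$, multiplicativity reduces to verifying the single identity in $\CH(E)$
$$ \eta^*i^*(\alpha)\cdot c_\beta+c_\alpha\cdot\eta^*i^*(\beta)-(H+L)\,c_\alpha c_\beta=c_{\alpha\beta}. $$

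The main obstacle is to make the defect class $\eps'_\alpha$ sufficiently explicit and to check this last identity. The class $\eps'_\alpha$ is pinned down by the normalisation $\eta'_*(\eps'_\alpha)=0$ together with $-(H+L)\eps'_\alpha=j^*j_*(\eps'_\alpha)=\eta^*i^*(\alpha)-\eta'^*i'^*[Z]_*(\alpha)$; determining it amounts to translating $i^*(\alpha)=\sum_k\pi^*(\sigma_k^\alpha)h^k$ between the two $\bP^{r-1}$-bundle structures $\eta\colon E\to P$ and $\eta'\colon E\to P'$ on the incidence variety $E\iso W$. I expect this to be the hard part: one must exploit that $H$ and $L$ both descend to the common base $S$ (with $H+L$ the relative hyperplane class of each bundle, by Lemma~\ref{lem:Mukai-flop2}.\ref{it:O_Omega(1) iso O(1,1)}), combine the pushforward formulas of Lemma~\ref{lem:pi_*(h^k)} and Lemma~\ref{lem:eta(c_r-1(W))} with the Chern class computation of Lemma~\ref{lemma:ci(Omega-tensor-O(1))}, and finally invoke the binomial identity of Claim~\ref{claim:binomial identity} to match coefficients and confirm the displayed identity on $E$.
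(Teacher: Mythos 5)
Your setup is sound and closely parallels the first half of the paper's argument: the formula $[Z]_*(\alpha)=\phi'_*\phi^*(\alpha)+i'_*\pi'^*(\sigma_r^\alpha)$ is exactly Lemma \ref{lem:[Z]_*=}, and your defect class $\eps'_\alpha$ is (up to sign) the class $\delta_\alpha$ produced by Lemma \ref{lem:existence of delta_alpha}. The difference is one of coordinates: you pull everything back to $\CH(\Xh)$ along the injective ring homomorphism $\phi'^*$ and collect the discrepancy as $j_*$ of a class on $E$, whereas the paper pushes everything forward to $\CH(X')$ and collects the discrepancy as $i'_*$ of classes in $\CH(P')$, where Lemma \ref{lem:eta_*(H^k)} and the $\tau_{i,j}$ formalism make every term explicitly computable.

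However, the proposal stops exactly where the actual difficulty begins. The displayed identity in $\CH(E)$ is not verified; you list the tools you expect to need and announce that this will be ``the hard part'', but this verification is the entire content of the proposition --- in the paper it occupies several pages (Claims \ref{claim:delta_alpha} and \ref{claim:help-sum}, the computation of $T_1(j)$ and $T_2$, and the final cancellation against \eqref{eq:RHS(aim)_final}). Without it there is no proof. Two further points would need attention if you carried the computation out. First, multiplicativity only requires $j_*\big(\eta^*i^*(\alpha).c_\beta+c_\alpha.\eta^*i^*(\beta)-(H+L)c_\alpha c_\beta\big)=j_*(c_{\alpha\beta})$; since $j_*$ is not injective (its kernel is controlled by $\ker (i_*)$ via Proposition \ref{prop:blow-up_Fulton}.\ref{part e}), your reduction to an equality in $\CH(E)$ itself is strictly stronger than what is needed and would have to be either justified or weakened --- the paper sidesteps this by applying $\eta'_*$ (equivalently $\phi'_*j_*$) throughout and verifying identities in $\CH(P')$. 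Second, your characterization $j^*j_*(\eps'_\alpha)=\eta^*i^*(\alpha)-\eta'^*i'^*[Z]_*(\alpha)$ contains a slip: since $j^*\phi'^*=\eta'^*i'^*$ and $\phi'_*\phi^*(\alpha)=[Z]_*(\alpha)-i'_*\pi'^*(\sigma_r^\alpha)$, the second term should be $\eta'^*i'^*\phi'_*\phi^*(\alpha)$, which differs from $\eta'^*i'^*[Z]_*(\alpha)$ by $\eta'^*i'^*i'_*\pi'^*(\sigma_r^\alpha)$; this would propagate into the determination of $\eps'_\alpha$.
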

 Proving this by explicit computations will take up the rest of this section.

\begin{remark}
  In \cite[Lemma 2.7]{Beauville} Beauville considers an elementary Mukai flop $X\leftarrow \Xh \to
  X'$, where $P\iso \bP^r$. He states that for $\alpha \in \CH^1(X)$ the equality 
  $\big([\Xh]_*(\alpha)\big)^{r+1}=[\Xh]_*(\alpha^{r+1}) $ holds.
  Generalizing this lemma finally led to Proposition \ref{prop:multiplicativity} and later to Theorem
  \ref{thm:invertibility}. 
  Note, that already for elementary Mukai flops Proposition \ref{prop:multiplicativity} is a
  significant generalization of Beauville's Lemma.
\end{remark}

\subsection{Basic calculations}  \label{ssec:basic calculations}
With the notation of Section \ref{ssec:Mukai flops} we will now give some lemmas which are used in the later calculations.
\begin{lemma} \label{lem:O(1)=H+L} \label{N=-H-L}
  The first Chern classes of $\dO_{\bP(\Omega_{P|S})}(1)$  and  $N_{E|\Xh}$ may be expressed in the
  following way:
  $$c_1(N_{E|\Xh})=-H-L\hspace{3 em} {\text and} \hspace{3
    em}c_1\big(\dO_{\bP(\Omega_{P|S})}(1)\big)=H+L\,.$$
  By symmetry also $c_1\big(\dO_{\bP(\Omega_{P'|S})}(1)\big)=H+L$\,.
\end{lemma}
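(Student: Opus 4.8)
The plan is to read off both identities directly from Lemma \ref{lem:Mukai-flop2}, since the present statement is nothing more than a reformulation of that lemma in terms of the classes $H$ and $L$. The only external inputs are the additivity of the first Chern class on tensor products of line bundles, $c_1(\dL\otimes\dM)=c_1(\dL)+c_1(\dM)$, together with $c_1(\dL\dual)=-c_1(\dL)$.

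First I would record how $\eta$ and $\eta'$ behave under the identification $E\iso W\subseteq \bP(F)\times_S\bP(F\dual)$ from Section \ref{ssec:Mukai flops}: under this isomorphism $\eta$ and $\eta'$ become exactly the restrictions to $W$ of the two projections $\pr_P$ and $\pr_{P'}$. Consequently $H=\eta^*(h)=c_1\big(\pr_P^*\dO_{\bP(F)}(1)\big)$ and $L=\eta'^*(l)=c_1\big(\pr_{P'}^*\dO_{\bP(F\dual)}(1)\big)$. For the second identity I would then apply part (1) of Lemma \ref{lem:Mukai-flop2} followed by additivity of $c_1$:
\begin{equation*}
c_1\big(\dO_{\bP(\Omega_{P|S})}(1)\big)=c_1\big(\pr_P^*\dO_{\bP(F)}(1)\big)+c_1\big(\pr_{P'}^*\dO_{\bP(F\dual)}(1)\big)=H+L,
\end{equation*}
and for the first identity I would apply part (2) of Lemma \ref{lem:Mukai-flop2} together with $c_1(\dO(-1))=-c_1(\dO(1))$:
\begin{equation*}
c_1(N_{E|\Xh})=c_1\big(\pr_P^*\dO_{\bP(F)}(-1)\big)+c_1\big(\pr_{P'}^*\dO_{\bP(F\dual)}(-1)\big)=-H-L.
\end{equation*}

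The symmetric statement $c_1\big(\dO_{\bP(\Omega_{P'|S})}(1)\big)=H+L$ follows in the same way, since the incidence variety $W$ is symmetric in $P$ and $P'$; the analogue of part (1) of Lemma \ref{lem:Mukai-flop2} for $\bP(\Omega_{P'|S})$ returns the identical line bundle and hence the same class. There is no genuine obstacle here: all of the geometric content is already contained in Lemma \ref{lem:Mukai-flop2}, and the present lemma only translates it into the fixed notation. The single point requiring care is the compatibility of $\eta,\eta'$ with $\pr_P,\pr_{P'}$ under $E\iso W$, which is immediate from the construction of these identifications.
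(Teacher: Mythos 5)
Your proposal is correct and matches the paper's argument, which simply states that the lemma is a direct corollary of Lemma \ref{lem:Mukai-flop2}; you have merely spelled out the implicit details (the identification of $\eta,\eta'$ with the projections under $E\iso W$ and the additivity of $c_1$ on tensor products).
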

\begin{proof}
  This is a direct corollary from Lemma \ref{lem:Mukai-flop2}.
\end{proof}

\begin{corollary}
	The class $H$ coincides with the first Chern class of the relative $\dO(1)$ on $E$, with respect
        to the bundle structure $E\iso\bP\big(\Omega_{P'|S}\otimes \dO_{\bP(F\dual)}(1)\big)$.
\hfill \qedsymbol
\end{corollary}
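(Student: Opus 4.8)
The plan is to deduce this from the symmetric form of Lemma \ref{lem:O(1)=H+L} together with the standard behaviour of the relative $\dO(1)$ under twisting the defining bundle by a line bundle pulled back from the base. First I would recall that, by the symmetry between $P$ and $P'$, the projection $\eta':E\to P'$ exhibits $E$ as the projective bundle $\bP(\Omega_{P'|S})$, and that Lemma \ref{lem:O(1)=H+L} records precisely $c_1\big(\dO_{\bP(\Omega_{P'|S})}(1)\big)=H+L$. Since projectivising a vector bundle is insensitive to twisting by a line bundle from the base, the identity map also realises $E$ as $\bP\big(\Omega_{P'|S}\otimes \dO_{\bP(F\dual)}(1)\big)$, so the statement amounts to comparing the two relative $\dO(1)$'s attached to these two descriptions of one and the same bundle over $P'$.

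The key input is the twisting formula for the relative $\dO(1)$: for a vector bundle $V$ and a line bundle $M$ on a base $Y$ there is a canonical identification $\bP(V)\iso \bP(V\otimes M)$ over $Y$, under which $\dO_{\bP(V\otimes M)}(-1)$ corresponds to $\dO_{\bP(V)}(-1)\otimes \rho^*M$, where $\rho$ denotes the projection to $Y$ and $\bP(V)=\PProj\big(\Sym(V\dual)\big)$ is the convention used throughout. Taking duals and first Chern classes this yields
$$c_1\big(\dO_{\bP(V\otimes M)}(1)\big)=c_1\big(\dO_{\bP(V)}(1)\big)-\rho^*c_1(M).$$
I would verify this fibrewise by inspecting the tautological line subbundle: a line $\ell$ in a fibre $V_y$ corresponds to the line $\ell\otimes M_y$ in $(V\otimes M)_y$, so that the tautological subbundle is indeed twisted by $\rho^*M$.

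Applying this with $V=\Omega_{P'|S}$, $M=\dO_{\bP(F\dual)}(1)$ and $\rho=\eta'$, and using $\eta'^*c_1\big(\dO_{\bP(F\dual)}(1)\big)=\eta'^*(l)=L$ by the definition of $L$ together with $c_1\big(\dO_{\bP(\Omega_{P'|S})}(1)\big)=H+L$ from the first step, I obtain
$$c_1\big(\dO_{\bP(\Omega_{P'|S}\otimes \dO_{\bP(F\dual)}(1))}(1)\big)=(H+L)-L=H,$$
which is exactly the assertion. The only point requiring care is the sign in the twisting formula, since it depends on the chosen projectivisation convention; pinning it down via the tautological subbundle, rather than quoting a formula, removes any ambiguity and is really the whole content of the proof.
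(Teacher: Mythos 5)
Your proposal is correct and is exactly the deduction the paper intends: the corollary is stated with no written proof, as an immediate consequence of the symmetric identity $c_1\big(\dO_{\bP(\Omega_{P'|S})}(1)\big)=H+L$ from Lemma \ref{lem:O(1)=H+L} combined with the standard shift of the relative $\dO(1)$ under twisting by a line bundle from the base, giving $(H+L)-L=H$. Your care with the sign, fixed via the tautological subbundle under the convention $\bP(F)=\PProj\big(\Sym(F\dual)\big)$, is the right way to make this airtight.
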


As a consequence, $H$ fulfils the following Chern class identity:
\begin{equation} \label{eq:chern(H)}
	\sum_{n=0}^r{\eta'^*\Big(c_{r-n}\big(\Omega_{P'|S}\otimes
          \dO_{\bP(F\dual)}(1)\big)\Big). H^n}=0 \, .
\end{equation}

\begin{lemma} \label{lem:eta_*(H^k)}
	The following identities in $\CH(P')$ hold:
	\begin{equation*}
		\eta'_*(H^k)=\eta'_*\eta^*(h^k)=
		\begin{cases}
			0					&\text{\rm for $k\leq r-2$ }	\\
			1_{X'}		&\text{\rm for $k = r-1$}	\\
			l-\pi'^*c_1 (F)	&\text{\rm for $k=r$}.
		\end{cases}
	\end{equation*}
\end{lemma}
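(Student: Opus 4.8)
The plan is to use the \emph{second} projective bundle structure on $E$, the one furnished by $\eta'$. By the preceding Corollary, the map $\eta':E\to P'$ realizes $E$ as the projective bundle $\bP(G)$ with $G:=\Omega_{P'|S}\otimes\dO_{\bP(F\dual)}(1)$, and $H$ is the first Chern class of the corresponding relative $\dO(1)$. Since $P'=\bP(F\dual)$ has relative dimension $r$ over $S$, the relative cotangent bundle $\Omega_{P'|S}$ has rank $r$, and hence so does $G$. The first asserted equality requires no work: because $H=\eta^*(h)$ and $\eta^*$ is a ring homomorphism, we have $H^k=\eta^*(h^k)$, so $\eta'_*(H^k)=\eta'_*\eta^*(h^k)$.

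I would then apply Lemma \ref{lem:pi_*(h^k)} verbatim to the bundle $\eta':\bP(G)\to P'$, whose relative $\dO(1)$ has first Chern class $H$ and with $\rank(G)=r$. This immediately yields $\eta'_*(H^k)=0$ for $k\leq r-2$, $\eta'_*(H^{r-1})=1_{P'}$ (the class written $1_{X'}$ in the statement), and $\eta'_*(H^r)=-c_1(G)$, thereby settling the first two cases at once. It then remains only to identify $-c_1(G)$ with $l-\pi'^*c_1(F)$.

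For the last case I would compute $c_1(G)$ directly. Using the tensor-by-line-bundle formula, $c_1(G)=c_1(\Omega_{P'|S})+\rank(\Omega_{P'|S})\cdot l=c_1(\Omega_{P'|S})+r\,l$. Applying Lemma \ref{lem:c_i(Omega)} to $P'=\bP(F\dual)$ (so $\rank(F\dual)=r+1$) gives $c_1(\Omega_{P'|S})=-(r+1)\,l-\pi'^*c_1(F\dual)$; substituting $c_1(F\dual)=-c_1(F)$ yields $c_1(\Omega_{P'|S})=-(r+1)\,l+\pi'^*c_1(F)$, and hence $c_1(G)=-l+\pi'^*c_1(F)$. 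Therefore $\eta'_*(H^r)=-c_1(G)=l-\pi'^*c_1(F)$, as claimed. (Alternatively, the relation \eqref{eq:chern(H)} already records that $H$ satisfies the characteristic equation of $G$ over $P'$, so one could extract the top push-forward from it; but the direct Segre-class computation through Lemma \ref{lem:pi_*(h^k)} is cleaner.)

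There is no serious obstacle here, since the whole content is a push-forward along a projective bundle. The only points demanding care are bookkeeping ones: reading off from the Corollary that it is $\eta'$ (not $\eta$) that serves as the relevant bundle projection and that $\rank(G)=r$, and keeping the signs straight in the Chern-class computation — in particular the dualization $c_1(F\dual)=-c_1(F)$ and the rank factor $r$ appearing in $c_1(\Omega_{P'|S}\otimes\dO_{\bP(F\dual)}(1))=c_1(\Omega_{P'|S})+r\,l$.
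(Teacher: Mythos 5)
Your proof is correct and follows essentially the same route as the paper's: both push forward along the second projective-bundle structure $\eta'\colon E\to P'$ using Lemma \ref{lem:pi_*(h^k)} and then pin down the $k=r$ case by computing a first Chern class via Lemma \ref{lem:c_i(Omega)}. The only difference is cosmetic — the paper works with the untwisted bundle $\bP(\Omega_{P'|S})$, writing $H=c_1(\dO_{\bP(\Omega_{P'|S})}(1))-L$ and expanding, whereas you use the twisted structure $\bP(\Omega_{P'|S}\otimes\dO_{\bP(F\dual)}(1))$ from the Corollary so that $H$ is itself the relative hyperplane class, which spares you the binomial expansion.
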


\begin{proof}
  By Lemma \ref{lem:O(1)=H+L} one obtains $H=c_1(\dO_{\bP(\Omega_{P'|S})}(1))-L$. The statement for
  $k\leq r-2$ and $k=r-1$ then follow directly from Lemma \ref{lem:pi_*(h^k)}. For the case $k=r$ apply Lemma
  \ref{lem:c_i(Omega)} additionally, in order to determine $c_1(\Omega_{P'|S})$.
\end{proof}

Let us now prove the following more explicit form of $[Z]_*$ (for general Mukai flops):
\begin{lemma}\label{lem:[Z]_*=}
For all $\alpha \in \CH(X)$, the map $[Z]_*$ is given by the following formula: $$[Z]_*(\alpha)=\phi'_*\phi^*(\alpha)+ i'_* \pi'^* (\sigma_r^\alpha)\,,$$ 
where $\sum_{k=0}^r{\pi^*(\sigma_k^\alpha ) h^k}=i^*\alpha$ as in Definition \ref{def:sigma_k}.
\end{lemma}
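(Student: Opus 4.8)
The plan is to compute $[Z]_*(\alpha) = q'_*([Z].q^*\alpha)$ directly by exploiting the decomposition $Z = \Xh \cup (P\times_S P')$ from \eqref{eq:Z=}. Since $[Z] = [\Xh] + [P\times_S P']$ in $\CH(X\times X')$, linearity of the correspondence gives $[Z]_*(\alpha) = [\Xh]_*(\alpha) + [P\times_S P']_*(\alpha)$, and I would treat the two summands separately. For the first summand I would identify $\Xh \subseteq X\times X'$ with its image under $(\phi,\phi')$; the correspondence with kernel $[\Xh]$ is then exactly the composite $\phi'_*\phi^*$, since $q^*\alpha$ pulls back along $\phi$ and $q'_*$ pushes forward along $\phi'$. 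This accounts for the term $\phi'_*\phi^*(\alpha)$.

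The main work is to show that the second summand contributes precisely $i'_*\pi'^*(\sigma_r^\alpha)$. Here $P\times_S P' \subseteq X\times X'$ sits via $i$ on the first factor and $i'$ on the second. I would unwind the correspondence: writing $\iota:P\times_S P'\inj X\times X'$ and letting $a:P\times_S P'\to P$, $b:P\times_S P'\to P'$ be the two projections, the class $[P\times_S P']_*(\alpha)$ equals $i'_* b_*\big(a^* i^*(\alpha)\big)$ by the projection formula applied to $\iota$ together with base change. Now I substitute the expansion $i^*\alpha = \sum_{k=0}^r \pi^*(\sigma_k^\alpha)\,h^k$ from Definition \ref{def:sigma_k}. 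The key point is that $P\times_S P' = \bP(F)\times_S \bP(F\dual)$ is itself a projective bundle over $P' = \bP(F\dual)$ with fiber $\bP(F)$, and $a^* h$ is the relative hyperplane class of this bundle. Pushing forward the monomials $a^*(h^k)$ along $b$ is governed exactly by Lemma \ref{lem:pi_*(h^k)}: the fiber dimension is $r$, so $b_*(a^*h^k)$ vanishes for $k < r$, and only the top term $k=r$ survives, yielding $b_*\big(a^*\pi^*(\sigma_r^\alpha)\cdot a^*h^r\big) = \pi'^*(\sigma_r^\alpha)\cdot b_*(a^*h^r) = \pi'^*(\sigma_r^\alpha)$ (the $k=r$ case of the Segre computation, up to the $-c_1(F)$ correction term which I expect to drop or cancel after pushing forward from $S$). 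Combining, $[P\times_S P']_*(\alpha) = i'_*\pi'^*(\sigma_r^\alpha)$, as desired.

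The hard part will be bookkeeping the fiber-product projections carefully: one must confirm that $b:P\times_S P'\to P'$ really is the projective bundle $\bP(a^* F)$ in the sense compatible with Lemma \ref{lem:pi_*(h^k)}, so that $h = c_1(\dO_{\bP(F)}(1))$ pulled back to the fiber product is the correct relative $\dO(1)$, and that the base change $b_* a^* = \pi'^* \pi_*$ holds (flat base change for the Cartesian square over $S$). A subtle point is the exact treatment of the $k=r$ term: Lemma \ref{lem:pi_*(h^k)} gives $\pi_*(h^r) = -c_1(F)$ rather than a clean unit, so I would need to verify that the formula in the statement genuinely uses only the coefficient $\sigma_r^\alpha$; the resolution is that the relevant pushforward here is along the relative bundle of dimension $r$ (top class $h^r$ integrates to $1_S$ when the fiber is $\bP^r$, i.e. $\rank F - 1 = r$), so it is the $k = \rank(F)-1 = r$ case of Lemma \ref{lem:pi_*(h^k)} that applies and gives $1_{P'}$, not the $k=\rank(F)$ case. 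Getting this indexing exactly right is where I would be most careful.
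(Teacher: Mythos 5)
Your proposal is correct and follows essentially the same route as the paper: split $[Z]=[\Xh]+[P\times_S P']$, identify the first correspondence with $\phi'_*\phi^*$, and reduce the second to $\pi'^*\pi_*(i^*\alpha)=\pi'^*(\sigma_r^\alpha)$ via the base-change identity $\pr_{P'*}\pr_P^*=\pi'^*\pi_*$ for the Cartesian square over $S$ together with $\pi_*(h^k)=0$ for $k<r$ and $\pi_*(h^r)=1_S$. Your resolution of the indexing worry is also the right one: since $\rank F=r+1$, the relevant case of Lemma \ref{lem:pi_*(h^k)} is $k=\rank(F)-1=r$, so the $-c_1(F)$ term never enters.
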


\begin{proof}
Consider the two natural commutative diagrams:
	$$\begin{tikzcd}[row sep=small, column sep=small]
          & &\Xh \ar[hook]{dd}[description]{\iota_{\Xh}} \ar{dddll}[swap]{\phi} \ar{dddrr}{\phi'}
          &			&	\\
          &&&&	\\
          & & X\times X' \ar{dll}{q} \ar{drr}[swap,inner sep = 0]{q'}
          &			&\\
          X & & & &X'
	\end{tikzcd}
\hspace{2 em}{\rm and}
\hspace{2 em}
	\begin{tikzcd}[row sep=small, column sep=tiny]
          && & &P\times_S P' \ar[hook]{dd}[description]{\iota_{P\times_S P'}} \ar{dddllll}[swap]{\pr_P}
          \ar{dddrrrr}{\pr_{P'}}
          &			&		&&	\\
          &&&&&&&&				\\
          && & & X\times X' \ar{dl}{q} \ar{dr}[swap,inner sep = 0]{q'}
          &			&		&&	\\
          P \ar[hook]{rrr}[swap]{i} && &X & &X' \ar[hookleftarrow]{rrr}[swap,inner sep = 0.25 ex]{i'} &
          &&P'.
	\end{tikzcd}$$

One computes: 
\begin{align}
[Z]_*(\alpha) &= q'_*([Z].q^*\alpha) \overset{\eqref{eq:Z=}}{=}q'_*\big({\iota_{\Xh}}_* (1_{\Xh}).q^*\alpha\big)+q'_*\big({\iota_{P\times_S P'}}_* (1_{P\times_S P'}).q^*\alpha\big)   \notag \\
	&\hspace{-0.45 em}\overset{\text{\rm (PF)}}{=} 			
		\phi'_*(\phi^*\alpha) + i'_* {\pr_{P'}}_* ( \pr_P^* i^* \alpha).  	\label{[Z]_*1}
\end{align}

The diagram
$$\begin{tikzcd}[row sep= small, column sep = small]
  &P\times_S P' \ar{dl}[swap]{\pr_P} \ar{dr}{\pr_{P'}}
  & 						\\
  P \ar{dr}[swap]{\pi} & \times
  &P'	\ar{dl}[inner sep=0]{\pi'}					\\
  &S &
\end{tikzcd}$$

is a fibre product by definition. Furthermore $\pi$ and $\pi'$ are both flat and proper.
Given this situation, \cite[Proposition 1.7]{Fulton}
states that $\pr_{P'*} \pr_P^* = \pi'^*\pi_*$. 

Since furthermore by Lemma \ref{lem:pi_*(h^k)}:
\begin{equation} \label{eq:pi_*h^k}
	\pi_*h^k = 
	\begin{cases}
				1_{S} &\text{\rm if} \ k =r\\
				0			&\text{\rm if} \ k\leq r 			
	\end{cases} 
\end{equation}
 (recall that in our setting $\rank(F) = r+1$), one obtains:
\begin{align} \label{eq:pi'^*pi_*}
		{\pr_{P'}}_* ( \pr_P^* i^* \alpha) 	
		=  \pi'^*\pi_* i^* \alpha						
		\overset{\text{\rm \ref{def:sigma_k}}}{=}\sum _{k=0}^r {\pi'^*\pi_* (\pi^*\sigma_k^\alpha . h^k)  }
		\overset{\text{\rm (PF)}}{=}\sum _{k=0}^r {\pi'^*( \sigma_k^\alpha . \pi_* h^k )}=\pi'^*( \sigma_r^\alpha)\,.
\end{align}

Inserting this equality into (\ref{[Z]_*1}) yields:
\begin{equation*}
	[Z]_*(\alpha)
	\overset{\text{\rm (\ref{[Z]_*1})}}{=} \phi'_*(\phi^*\alpha) + i'_* {\pr_{P'}}_* ( \pr_P^* i^* \alpha)
	=\phi'_*\phi^*\alpha + i'_* \pi'^* (\sigma_r^\alpha)\,,
\end{equation*}
which concludes the proof.
\end{proof}

  At last, deduce from the self-intersection formula (\cite[p.\,103]{Fulton}) that for any regular embedding
  $i: P\inj X$ of codimension $r$ between complex varieties and for any $\alpha, \beta \in \CH(P)$ the
  following formula holds:
\begin{equation}\label{eq:i_*.i_*}
	i_*(\alpha).i_*(\beta)= i_*\big(\alpha.\beta.c_r(N_{P|X})\big).
\end{equation}

\subsection{Proof of Proposition \ref{prop:multiplicativity} (explicit version)}
 \label{ssec:proof:multiplicativity}
  Note that $[Z]_*$ is additive and $[Z]_*(1_X)=1_{X'}$. Since furthermore $[Z] \in \CH^{\dim X}(X\times X')$, the map $[Z]_*$
  respects the grading. It is hence enough to show multiplicativity of $[Z]_*$\,. 

	The proof relies on the explicit description of the Chow rings of projective bundles. We will show
        that the deviation from multiplicativity of $[Z]_*$ is of the form $i'_*(\gamma)$. One such
        element $\gamma$ can then be 
        determined by calculations in the Chow rings of $P$, $E$, and $P'$ and turns out to be zero. 

\subsubsection*{Preparation}
	For $\alpha\in \CH(X)$ set:
$$ \Delta_\alpha:=\phi'^*\phi'_*\phi^*(\alpha)-\phi^*(\alpha)$$ and
$$E_\alpha:=j^*\phi^*(\alpha)=\eta^*i^*(\alpha)\,.$$

	With this definition $\phi'_*(\Delta_\alpha)=(\phi'_*\phi'^*)\phi'_*\phi^*(\alpha)-\phi'_*\phi^*(\alpha)
			\overset{\text{\rm \ref{prop:blow-up_Fulton}.\ref{part b}}}{=}\phi'_*\phi^*(\alpha)-\phi'_*\phi^*(\alpha) =0$. 
	Therefore, Lemma \ref{lem:existence of delta_alpha} yields a unique element $$\delta_\alpha \in
        \CH(E), \ \ {\rm with}\ \ j_*(\delta_\alpha) =\Delta_\alpha \ {\rm and}\  \eta'_*(\delta_\alpha)=0.$$ 
	
We now start to study the map $[Z]_*$\,. 

	Let $\alpha,\beta\in \CH(X)$. By Lemma \ref{lem:[Z]_*=} we know:
	\begin {align*}
		[Z]_*\alpha.[Z]_*\beta&=\big(\phi'_*\phi^*(\alpha) + i'_*(\pi'^*\sigma_r^\alpha)\big)
                	.\big(\phi'_*\phi^*(\beta) + i'_*(\pi'^*\sigma_r^\beta\big)) \notag \\
		&=\phi'_*\phi^*(\alpha).\phi'_*\phi^*(\beta)
				+\underbrace{\phi'_*\phi^*(\alpha).i'_*(\pi'^*\sigma_r^\beta)}_{\stern}
				+\underbrace{i'_*(\pi'^*\sigma_r^\alpha).\phi'_*\phi^*(\beta)}_{\sternn}
				+\underbrace{i'_*(\pi'^*\sigma_r^\alpha).i'_*(\pi'^*\sigma_r^\beta)}_{\sternnn}.		
	\end{align*}

	Using the definition of $\Delta_\alpha$ and $\Delta_\beta$, one obtains furthermore:

	\begin{align*}
			\phi'_*\phi^*(\alpha).\phi'_*\phi^*(\beta)
			&\overset{\text{\rm \ref{prop:blow-up_Fulton}.\ref{part b}}}{=}	\phi'_*\phi'^*\Big(\phi'_*\phi^*(\alpha).\phi'_*\phi^*(\beta)\Big)
			= \phi'_*\Big((\phi^*(\alpha)+\Delta_\alpha)(\phi^*(\beta)+\Delta_\beta)\Big)			\notag \\
			&\hspace{0.45 em}= \phi'_*\big(\phi^*(\alpha.\beta)\big) + 
					\underbrace{\phi'_*\big(\Delta_\alpha.\phi^*(\beta)\big)}_{\I} +
					\underbrace{\phi'_*\big(\phi^*(\alpha).\Delta_\beta\big)}_{\II}	+
					\underbrace{\phi'_*\big(\Delta_\alpha.\Delta_\beta\big)}_{\III}	.			\notag 
	\end{align*}

	Together, this yields:
	\begin{align*}
		[Z]_*\alpha.[Z]_*\beta &= \phi'_*\phi^*(\alpha). \phi'_*\phi^*(\beta) + \stern + \sternn
                + \sternnn \phantom{\big(} \\
		&=\phi'_*\phi^*(\alpha.\beta)+ \I + \II + \III + \stern + \sternn + \sternnn \,.
	\end{align*}
	
	On the other hand, $[Z]_*(\alpha.\beta)$ is (by Lemma \ref{lem:[Z]_*=}) known to be:
	\begin{align*}
		[Z]_*(\alpha.\beta)
		=\phi'_*\phi^*(\alpha.\beta)+i'_*\pi'^*(\sigma_r^{\alpha.\beta}).
	\end{align*}
	
	In order to prove multiplicativity of $[Z]_*$ it is hence enough to show that:
	\begin{equation} \label{aim}
		\I + \II + \III + \stern + \sternn + \sternnn = i'_*\pi'^*(\sigma_r^{\alpha.\beta}).
	\end{equation}
	This will be shown by explicit calculations.
\subsubsection*{First expression for $\I + \II + \III$}
	Rewrite $\I$, $\II$, and $\III$ to see that they lie in the push forward of $\CH(P')$.

	\begin{align}
		\I \hspace{0.5 em} &= \ \phi'_*\big(\Delta_\alpha.\phi^*(\beta)\big)
		= \phi'_*\big(j_*(\delta_\alpha).\phi^*(\beta)\big)
		\overset{\text{\rm (PF)}}{=}
                \phi'_*j_*\big(\delta_\alpha.\underbrace{j^*\phi^*(\beta)}_{=E_\beta}\big)      \notag   \\
		&\hspace{-0.1 em} \overset{\text{\rm \ref{def:sigma_k}}}{=} i'_*\eta'_*\Big(\delta_\alpha.\big(\sum_{i=0}^r {\eta'^*\pi'^*\sigma_i^\beta.H^i}\big)\Big) 
		\overset{\text{\rm (PF)}}{=} i'_*\big(\sum_{i=0}^r {\pi'^*\sigma_i^\beta . \eta'_*(\delta_\alpha.H^i)}\big).                   \label{I.b}
	\end{align}

	Switching the roles of $\alpha$ and $\beta$, one obtains:
	$\II = \phi'_* j_* (E_\alpha.\delta_\beta) $.
	
	\begin{align*}
		\III \ 
&= \ \phi'_*(\Delta_\alpha.\Delta_\beta)
=\phi'_*\Big(\big(\phi'^*\phi'_*\phi^*(\alpha)-\phi^*(\alpha)\big).j_*(\delta_\beta)\Big) \\
 &\hspace{-1.5 mm} \overset{\text{\rm (PF)}}{=}
\phi'_*\phi^*(\alpha) .\underbrace{\phi'_*\underbrace{j_*(\delta_\beta)}_{\Delta_\beta}}_{=0}
- \phi'_* j_* \big(j^*\phi^*(\alpha).\delta_\beta\big)
=- \phi'_* j_* (E_\alpha.\delta_\beta) = -\II\,.
	\end{align*}
	
	Hence one obtains:
        \begin{equation} \label{I+II+III}
		\I + \II + \III= \ \ \I 
                \overset{\text{\rm \eqref{I.b}}}{=} i'_*\big(\sum_{i=0}^r {\pi'^*\sigma_i^\beta . \eta'_*(\delta_\alpha.H^i)}\big). 
	\end{equation}
	
Note that this expression consist of terms of the form  $i'_*\big(\xi. \eta'_*(\delta_\alpha.H^i)\big)$, with $\xi \in\CH(P')$\,. 
	
\subsubsection*{Expression for $i'_*\big(\xi. \eta'_*(\delta_\alpha.H^i)\big)$ independent of $\delta_\alpha$}
	Fix a class $\xi\in\CH(P')$ and $0 \leq j \leq r$.  The expression
        $\phi'_*\big(\Delta_\alpha.j_*(\eta'^*\xi. H^j)\big)$ can be rewritten in two different ways. 
	
	On the one hand:
	\begin{align*}
		\phi'_*\big(\Delta_\alpha.j_*(\eta'^*\xi . H^j )\big)
		&=\phi'_*\Big(\big(\phi'^*\phi'_*\phi^*(\alpha)-\phi^*(\alpha)\big).j_*(\eta'^*\xi . H^j )\Big) 		\notag \\
		&\hspace{-0.45 em} \overset{\text{\rm (PF)}}{=} \phi'_*\phi^*(\alpha).\phi'_*j_*(\eta'^*\xi . H^j)-\phi'_*j_*\big(j^*\phi^*(\alpha).\eta'^*\xi. H^j \big)											\notag \\
		&= \phi'_*\phi^*(\alpha).i'_*\eta'_*(\eta'^*\xi . H^j)
							-i'_*\eta'_*(E_\alpha.\eta'^*\xi . H^j )	 		\notag \\
		&\hspace{-0.45 em}\overset{\text{\rm (PF)}}{=} \ \phi'_*\phi^*(\alpha).i'_*\big(\xi. \underbrace{\eta'_*(H^j)}_{=0 \ \forall j \leq r-2}\big)
						-i'_*\big(\xi .\eta'_*(E_\alpha. H^j)\big).											
	\end{align*} 

 	On the other hand using \eqref{eq:i_*.i_*} and Lemma \ref{N=-H-L} yields:
 	\begin{align*}
 		\phi'_*\big(\Delta_\alpha.j_*(\eta'^*\xi . H^j )\big)
 		&=\phi'_*\big(j_*(\delta_\alpha).j_*(\eta'^*\xi . H^j)\big)  
		=i'_*\Big(\xi.\eta'_*\big(\delta_\alpha. H^j (-H-L)\big)	\Big).
 	\end{align*}

	Together this shows:
	\begin{equation} \label{delta_alpha2}
			i'_*\Big(\xi.\eta'_*\big(\delta_\alpha. H^j (H+L)\big)	\Big)
                        =i'_*\big(\xi .\eta'_*(E_\alpha. H^j)\big), \hspace{10 mm} \forall\ 0 \leq j \leq r-2.
	\end{equation}
	
	Furthermore, for $j=r-1$ the above equality becomes (using $\eta'_*(H^{r-1})=1_{P'}$):
	\begin{equation} \label{eq:phi'_*phi^*(alpha)}
			\phi'_*\phi^*(\alpha).i'_*\xi=i'_*\big(\xi.\eta'_*(E_\alpha.H^{r-1})\big)-i'_*\Big(\xi.\eta'_*\big(\delta_\alpha.H^{r-1}(H+L)\big)\Big).
	\end{equation}
	
	By means of \eqref{delta_alpha2} we now prove:
	
	\begin{claim} \label{claim:delta_alpha}
			For all $0 \leq j\leq r-1$ and for all $\xi \in \CH(P')$ the following equation holds:
			\begin{equation*}
				i'_*\big(\xi.\eta'_*(\delta_\alpha. H^j)\big)= \sum_{i=0}^{j-1}{(-1)^i \ i'_*\big(\xi.l^{i}.\eta'_*(E_\alpha . H^{j-i-1})\big)}.
			\end{equation*}
	\end{claim}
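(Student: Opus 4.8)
The plan is to prove the claim by induction on $j$, with equation \eqref{delta_alpha2} serving as the recurrence that drives the induction. The first step is to recast \eqref{delta_alpha2} in a usable form. Since $L=\eta'^*(l)$, the projection formula gives $\eta'_*(\delta_\alpha.H^j.L)=l.\eta'_*(\delta_\alpha.H^j)$, so expanding $H^j(H+L)=H^{j+1}+H^j L$ turns \eqref{delta_alpha2} into
$$i'_*\big(\xi.\eta'_*(\delta_\alpha.H^{j+1})\big)=i'_*\big(\xi.\eta'_*(E_\alpha.H^j)\big)-i'_*\big((\xi.l).\eta'_*(\delta_\alpha.H^j)\big),$$
valid for all $\xi\in\CH(P')$ and all $0\leq j\leq r-2$. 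This expresses the ``$H^{j+1}$-moment'' of $\delta_\alpha$ in terms of its ``$H^j$-moment'' (with $\xi$ replaced by $\xi.l$) plus a term involving only $E_\alpha$, which is exactly the shape needed to run the induction.

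For the base case $j=0$ the asserted sum is empty, so what must be checked is $i'_*\big(\xi.\eta'_*(\delta_\alpha)\big)=0$; this is immediate from $\eta'_*(\delta_\alpha)=0$, the defining property of $\delta_\alpha$. For the inductive step I would assume the formula holds at level $j$ for \emph{every} $\xi\in\CH(P')$, and apply the recurrence above. The induction hypothesis, used with $\xi$ replaced by $\xi.l$, rewrites the last term as a sum over $i=0,\dots,j-1$; reindexing by $i\mapsto i+1$ shifts the power of $l$ by one and, together with the sign from the recurrence, contributes precisely the terms indexed $i=1,\dots,j$ of the target formula, while the standalone $E_\alpha$-term $i'_*\big(\xi.\eta'_*(E_\alpha.H^j)\big)$ supplies the $i=0$ term. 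Collecting these reproduces exactly the claimed identity at level $j+1$.

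The step I expect to require the most care is the bookkeeping in the inductive step: tracking the sign $(-1)^i$, the power $l^i$, and the exponent of $H$ simultaneously through the reindexing, and confirming that the range of validity is respected — establishing the formula up to $j+1=r-1$ only ever invokes \eqref{delta_alpha2} at levels $j\leq r-2$, which is precisely where it was proved. No new geometric input is needed beyond \eqref{delta_alpha2} and the vanishing $\eta'_*(\delta_\alpha)=0$; the entire content lies in recognizing \eqref{delta_alpha2} as a recurrence and unwinding it.
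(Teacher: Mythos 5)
Your proof is correct and follows essentially the same route as the paper: the base case $j=0$ from $\eta'_*(\delta_\alpha)=0$, and the inductive step obtained by expanding $H^j(H+L)$ in \eqref{delta_alpha2}, using the projection formula to pull $l$ out of $\eta'_*$, and applying the induction hypothesis with $\xi$ replaced by $\xi.l$. The bookkeeping in your reindexing and the range check ($j\leq r-2$ for the recurrence, yielding the claim up to $j=r-1$) are both accurate.
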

	
	\begin{proof}  In the case $j=0$, it is
          enough to recall that $\eta'_*(\delta_\alpha)=0$ by the choice of $\delta_\alpha$. 
          The full statement then follows inductively from the equation
		$$i'_*\big(\xi.\eta'_*(\delta_\alpha. H^{j+1})	\big)+i'_*\big(\xi.l.\eta'_*(\delta_\alpha. H^j)	\big)
			\overset{\text{\rm (PF)}}{=} 
				i'_*\big(\xi.\eta'_*(\delta_\alpha. H^j (H+L))	\big)
			\overset{\text{\rm \eqref{delta_alpha2}}}{=} i'_*\big(\xi .\eta'_*(E_\alpha. H^j)\big)$$
which holds for $0\leq j\leq r-2$.
	\end{proof}

\subsubsection*{First expressions for $\stern$, $\sternn$ and $\sternnn$}	
	We now rewrite the terms $\stern$, $\sternn$
        and $\sternnn$. The new forms will show that they are in the image of $i'_*:\CH(P')\to \CH(X')$. 

	Use  \eqref{eq:phi'_*phi^*(alpha)} and Claim \ref{claim:delta_alpha} to see:	
	\begin{align}
          \stern &= \phi'_*\phi^*(\alpha).i'_*(\pi'^*\sigma_r^\beta)  \notag\\
          &\hspace{-0.65 em} \overset{{\rm \eqref{eq:phi'_*phi^*(alpha)}}}{=} 
             \ i'_*\Big(\pi'^*\sigma_r^\beta.\eta'_*(E_\alpha.H^{r-1})
            -\pi'^*\sigma_r^\beta.l.\eta'_*(\delta_\alpha.H^{r-1})
            -\pi'^*\sigma_r^\beta.\eta'_*(\delta_\alpha.H^{r})\Big)	\notag \displaybreak[0]\\
          &\hspace{-0.3 em} \overset{\text{\rm
              \ref{claim:delta_alpha}}}{=}i'_*\left(\pi'^*\sigma_r^\beta.\eta'_*(E_\alpha.H^{r-1})
            -\pi'^*\sigma_r^\beta.\sum_{i=0}^{r-2}{(-1)^i. l^{i+1}.\eta'_*(E_\alpha.H^{r-1-i-1})}
            -\pi'^*\sigma_r^\beta.\eta'_*(\delta_\alpha.H^{r})\right)	\notag \displaybreak[0]\\
          &=i'_*\left(\pi'^*\sigma_r^\beta.\sum_{i=0}^{r-1}{(-1)^i. l^{i}.\eta'_*(E_\alpha.H^{r-i-1})}
            -\pi'^*\sigma_r^\beta.\eta'_*(\delta_\alpha.H^{r})\right).  \label{stern2}
	\end{align}
	Switching $\alpha$ and $\beta$, one obtains the following expression for $\sternn$:
        \begin{equation} \label{sternn1}
          \sternn \ 
          =i'_*\bigg(\pi'^*\sigma_r^\alpha
          \sum_{i=0}^{r-1}{(-1)^i. l^{i}.\eta'_*(E_\beta.H^{r-i-1})}
          -\pi'^*\sigma_r^\alpha.\eta'_*(\delta_\beta.H^{r})\bigg).
	\end{equation}

	For $\sternnn$, application of \eqref{eq:i_*.i_*} gives:
	\begin{equation} \label{eq:sternnn1}
		\sternnn= 	i'_*(\pi'^*\sigma_r^\alpha).i'_*(\pi'^*\sigma_r^\beta) = i'_*\big(\pi'^*(\sigma_r^\alpha.\sigma_r^\beta).c_r(\Omega_{P'|S})\big).
	\end{equation}

\subsubsection*{Notation: $\tau_{i,j}$}

	Apply the fact that via $\pi^*$ the ring
	$\CH(P)$ is  a free $\CH(S)$-module generated by $1, h, h^2,... h^r$, to make
        the following definition: 
	\begin{definition}\label{def:tau_i,j}
		Let $i\in \bN_0$. Define $\tau_{i,j} \in \CH(S)$ for $j=0,1,\dotso, r$ as the unique elements such that:
		$$h^i=\sum_{j=0}^r{\pi^*(\tau_{i,j}).  h^j}\,.$$
		Furthermore, set $\tau_{i,j}=0$ for all $j\notin \{0,1,\dotso, r\}$.
	\end{definition}
	
	\begin{remark} \label{rem:tau=delta}
			Note that with this definition $\tau_{i,j}=\delta_{i,j}.1_S$ for all $i\leq r$, where $\delta_{i,j}$ is the Kronecker delta.
	\end{remark}
	
	\begin{remark} \label{rem:tau_i,j2}
			There is a recursive relation between the $\tau_{i,j}$ given by:
			$$\tau_{i+1, j} =\tau_{i,j-1} - c_{r+1-j}(F)\ \tau_{i,r} \,.$$
	\end{remark}
        \begin{proof}[Proof of Remark \ref{rem:tau_i,j2}]
          The statement follows by comparing the coefficients in:
          \begin{align*}
            h^{i+1}=h.h^i
            =\sum_{j=0}^r{\pi^*(\tau_{i,j}).  h^{j+1}} 
            =\sum_{j=1}^r{\pi^*(\tau_{i,j-1}).  h^{j}} - \sum_{j=0}^r{\pi^*\big(\tau_{i,r} \
              c_{r+1-j}(F)\big) \ h^j }.
          \end{align*}
        \end{proof}

\subsubsection*{Final expression for $i'_*\big(\pi'^*(\sigma_r^{\alpha.\beta})\big)$}
With this notation, we can express the right hand side of \eqref{aim} more explicitly:
	\begin{align}
			i'_*\big(\pi'^*(&\sigma_r^{\alpha.\beta})\big)
			\overset{\text{\rm \eqref{eq:pi'^*pi_*}}}{=} i'_*\pi'^*\big(\pi_*(i^*(\alpha.\beta))\big)
			= i'_*\pi'^*\pi_*\Big(\big(\sum_{k=0}^r{\pi^*(\sigma_k^\alpha).h^k}\big)\big(\sum_{j=0}^r{\pi^*(\sigma_j^\beta).h^j}\big)\Big)	\notag \\
			&=i'_*\pi'^*\Big(\sum_{k=0}^r{\sum_{j=0}^r{\sigma_k^\alpha\sigma_j^\beta\sum_{m=0}^r{\tau_{k+j,m}.\pi_*(h^m)}}} \Big)	
			\overset{\text{\rm \eqref{eq:pi_*h^k}}}{=} 		
					i'_*\pi'^*\Big(\sum_{k=0}^r{\sum_{j=0}^r{\sigma_k^\alpha\sigma_j^\beta.\tau_{k+j,r}}} \Big).  \label{eq:RHS(aim)_final}
	\end{align}	
	
In the sequel, we study the summands on	the left hand side of \eqref{aim}.

\subsubsection*{Final expression for $\I + \II + \III + \stern$}
	From \eqref{stern2} together with \eqref{I+II+III} one obtains:
	\begin{align}
		\I +& \II + \III + \stern \notag \\
		&= i'_*\left(\sum_{j=0}^r {\pi'^*\sigma_j^\beta . \eta'_*(\delta_\alpha.H^j)}\right)
				+ i'_*\left(\pi'^*\sigma_r^\beta \sum_{i=0}^{r-1}{(-1)^i. l^{i}.\eta'_*(E_\alpha.H^{r-i-1})}
				-\pi'^*\sigma_r^\beta.\eta'_*(\delta_\alpha.H^{r})\right)	\notag  \\
		&= i'_*\left(\sum_{j=0}^{r-1} {\pi'^*\sigma_j^\beta . \eta'_*(\delta_\alpha.H^j)}
				+ \pi'^*\sigma_r^\beta \sum_{i=0}^{r-1}{(-1)^i. l^{i}.\eta'_*(E_\alpha.H^{r-i-1})}		\right)	\notag   \displaybreak[0]\\
		&\hspace{-0.3 em} \overset{\rm \ref{claim:delta_alpha}}{=}
				i'_*\left(\sum_{j=0}^{r-1} {\pi'^*\sigma_j^\beta \sum_{i=0}^{j-1}{(-1)^i.l^i. \eta'_*(E_\alpha.H^{j-i-1})}}
				+\pi'^*\sigma_r^\beta\sum_{i=0}^{r-1}{(-1)^i. l^{i}.\eta'_*(E_\alpha.H^{r-i-1})}			\right)		\notag  \displaybreak[0] \\
		&= i'_*\left(\sum_{j=0}^{r} {\pi'^*\sigma_j^\beta \sum_{i=0}^{j-1}{(-1)^i.l^i. \eta'_*(E_\alpha.H^{j-i-1})}}	\right)	\notag  \\
		&\hspace{-0.1 em} \overset{\rm \ref{def:sigma_k} }{=}
				i'_*\left(\sum_{k=0}^r{\sum_{j=0}^{r} {\pi'^*(\sigma_k^\alpha.\sigma_j^\beta) \sum_{i=0}^{j-1}{(-1)^i.l^i. \eta'_*(H^{k+j-i-1})}}}\right). \label{Term1.1}
	\end{align}
	
	The following claim computes the sum appearing in the last line:
	\begin{claim} \label{claim:help-sum}
		For all $j, k \in \bN_0$ the following equation holds:
			$$ \sum_{i=0}^{j-1}{(-1)^i l^i \eta'_*(H^{k+j-i-1})} = \pi'^*(\tau_{k+j,r})+ (-1)^{j-1} l^j . \pi'^*(\tau_{k,r}).$$ 
	\end{claim}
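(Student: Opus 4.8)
The plan is to prove Claim \ref{claim:help-sum} by induction on $j$, the essential preliminary being an extension of Lemma \ref{lem:eta_*(H^k)} that computes $\eta'_*(H^m)$ for \emph{all} $m\geq 0$, not merely for $m\leq r$.

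First I would establish the auxiliary formula
\begin{equation*}
  \eta'_*(H^m)=\pi'^*(\tau_{m+1,r})+l\,\pi'^*(\tau_{m,r})\qquad\text{for all }m\geq 0.
\end{equation*}
To derive it, expand $h^m=\sum_{n=0}^r\pi^*(\tau_{m,n})\,h^n$ as in Definition \ref{def:tau_i,j} and pull back along $\eta$. Since $H=\eta^*(h)$, and since under $E\iso W\subseteq \bP(F)\times_S\bP(F\dual)$ both $\pi\circ\eta$ and $\pi'\circ\eta'$ equal the structure map of $E$ to $S$, one has $\eta^*\pi^*=\eta'^*\pi'^*$; hence $H^m=\sum_{n=0}^r\eta'^*\pi'^*(\tau_{m,n})\,H^n$. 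Applying $\eta'_*$ together with the projection formula gives $\eta'_*(H^m)=\sum_{n=0}^r\pi'^*(\tau_{m,n})\,\eta'_*(H^n)$, and Lemma \ref{lem:eta_*(H^k)} collapses this sum to the two surviving terms $n=r-1$ and $n=r$, namely $\pi'^*(\tau_{m,r-1})+\pi'^*(\tau_{m,r})\big(l-\pi'^*c_1(F)\big)$. The recursion $\tau_{m+1,r}=\tau_{m,r-1}-c_1(F)\,\tau_{m,r}$ from Remark \ref{rem:tau_i,j2} (the case of top index) then rewrites this as the displayed formula.

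With this in hand I would run the induction. Writing $S_j(k)$ for the left-hand side, the base case $j=0$ is immediate: the left side is the empty sum, while the right side is $\pi'^*(\tau_{k,r})+(-1)^{-1}\pi'^*(\tau_{k,r})=0$. For the inductive step, splitting off the $i=0$ term and reindexing the remainder by $i\mapsto i+1$ produces the recursion $S_{j+1}(k)=\eta'_*(H^{k+j})-l\,S_j(k)$. Substituting the inductive hypothesis for $S_j(k)$, the factor $l$ turns its $\tau_{k,r}$-term into $(-1)^{j}l^{j+1}\pi'^*(\tau_{k,r})$, which is exactly the second term of the right-hand side at level $j+1$; the remaining piece $\eta'_*(H^{k+j})-l\,\pi'^*(\tau_{k+j,r})$ equals $\pi'^*(\tau_{k+j+1,r})$ by the auxiliary formula, which is the first term at level $j+1$.

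The only genuine input is the auxiliary formula, i.e.\ understanding $\eta'_*(H^m)$ beyond the range covered by Lemma \ref{lem:eta_*(H^k)}; once it is available, the rest is bookkeeping with the sign $(-1)^i$, the power of $l$, and the reindexing. The main point requiring care is the identification $\eta^*\pi^*=\eta'^*\pi'^*$ coming from the fibre-product structure of the incidence variety, since this is precisely what allows the change of bundle structure (reading $E$ over $P$ versus over $P'$) to pass through the pushforward $\eta'_*$.
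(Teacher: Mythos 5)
Your proposal is correct and follows essentially the same route as the paper: the auxiliary formula $\eta'_*(H^m)=\pi'^*(\tau_{m+1,r})+l\,\pi'^*(\tau_{m,r})$ is exactly what the paper derives inside its computation (expanding $H^m$ via $\eta'^*\pi'^*(\tau_{m,n})$, applying Lemma \ref{lem:eta_*(H^k)}, and then the recursion of Remark \ref{rem:tau_i,j2}). The only difference is that the paper then lets the resulting sum telescope directly while you package the same cancellation as an induction on $j$; the two are interchangeable.
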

	\begin{proof}
		Using the definition of the $\tau_{i,j}$,  Lemma \ref{lem:eta_*(H^k)}  and Remark \ref{rem:tau_i,j2}, one computes:
		\begin{align*}
			\sum_{i=0}^{j-1}{(-1)^i l^i \eta'_*(H^{k+j-i-1})}
			&=\sum_{i=0}^{j-1}{(-1)^i l^i \eta'_*(\sum_{m=0}^r{\eta'^*\pi'^*(\tau_{k+j-i-1,m}).H^m})}  \\
			&=\sum_{i=0}^{j-1}{(-1)^i l^i \Big(\pi'^*(\tau_{k+j-i-1,r-1}) + \pi'^*(\tau_{k+j-i-1,r})\big(l-\pi'^*c_1(F)\big)\Big) } \displaybreak[0] \\
			&=\sum_{i=0}^{j-1}{(-1)^i l^i \big(\pi'^*(\tau_{k+j-i,r}) +l. \pi'^*(\tau_{k+j-i-1,r})\big) } \\
			&=\pi'^*(\tau_{k+j,r}) + (-1)^{j-1} l^j . \pi'^*(\tau_{k,r}).
		\end{align*}
                This proves the claim.
	\end{proof}
	Using this claim, one can simplify \eqref{Term1.1} in the following way:
	\begin{align} 
		\I + \II + \III & + \stern
		\overset{\rm \eqref{Term1.1}}{=}
		i'_*\bigg(\sum_{k=0}^r{\sum_{j=0}^{r} {\pi'^*(\sigma_k^\alpha.\sigma_j^\beta) \sum_{i=0}^{j-1}{(-1)^i.l^i. \eta'_*(H^{k+j-i-1})}}}\bigg) \notag \\
		&= i'_*\bigg(\sum_{k=0}^r{\sum_{j=0}^{r} {\pi'^*(\sigma_k^\alpha.\sigma_j^\beta)
					.\Big(\pi'^*(\tau_{k+j,r}) + (-1)^{j-1} l^j . \pi'^*(\tau_{k,r})\Big) }}\bigg) \notag \\
		&\hspace{-0.3 em }\overset{\rm \ref{rem:tau=delta}}{=} 
			i'_*\bigg(\sum_{k=0}^r{\sum_{j=0}^{r} {\pi'^*(\sigma_k^\alpha\sigma_j^\beta.\tau_{k+j,r})}
				+ \sum_{j=0}^{r}{\pi'^*(\sigma_r^\alpha.\sigma_j^\beta).(-1)^{j-1} l^j} }\bigg).						\label{eq:I+II+III+stern_final} 
	\end{align}
Note that the first part of this term coincides with the expression obtained for 
$i'_*\big(\pi'^*(\sigma_r^{\alpha.\beta})\big)$ in \eqref{eq:RHS(aim)_final}.

\subsubsection*{Final expression for $\sternn$}
	Recall that \eqref{sternn1} gives:
        \begin{equation*}
          \sternn \ 
          =i'_*\bigg(\pi'^*\sigma_r^\alpha
          \sum_{i=0}^{r-1}{(-1)^i. l^{i}.\eta'_*(E_\beta.H^{r-i-1})}
          -\pi'^*\sigma_r^\alpha.\eta'_*(\delta_\beta.H^{r})\bigg).
	\end{equation*}
        Let us first rewrite the second summand separately. Note that Claim \ref{claim:delta_alpha} only
        holds for $H^n$ with $n\leq r-1$. In order to reduce to this case, apply the Chern class
        identity \eqref{eq:chern(H)}.
        To shorten notation, set $\ell:=\dO_{\bP(F')}(1)$. With this definition $c_1(\ell)=l$. Then
        combining Claim \ref{claim:delta_alpha} and \eqref{eq:chern(H)} one obtains:
	\begin{align*}
			i'_*\bigg(-\pi'^*\sigma_r^\alpha. & \eta'_*(\delta_\beta.H^{r})\bigg)
				=
				i'_*\bigg(\pi'^*\sigma_r^\alpha \sum_{n=0}^{r-1}{c_{r-n}(\Omega_{P'|S}\otimes \ell)
											\sum_{i=0}^{n-1}{(-1)^i.l^{i}.\eta'_*(E_\beta . H^{n-i-1})}}\bigg). 
	\end{align*}
	The left summand of \eqref{sternn1} corresponds to the ($n=r$)\,-\,part of this sum. Hence:
	\begin{align*}
			\sternn \ 
			&=i'_*\bigg(\pi'^*\sigma_r^\alpha \sum_{n=0}^{r}{c_{r-n}(\Omega_{P'|S}\otimes \ell)
											\sum_{i=0}^{n-1}{(-1)^i.l^{i}.\eta'_*(E_\beta . H^{n-i-1})}}\bigg) \\
			&\hspace{-0.1 em} \overset{\rm \ref{def:sigma_k}}{=}
				i'_*\bigg(\sum_{j=0}^r{\pi'^*(\sigma_r^\alpha.\sigma_j^\beta) \sum_{n=0}^{r}{c_{r-n}(\Omega_{P'|S}\otimes \ell)
											\sum_{i=0}^{n-1}{(-1)^i.l^{i}.\eta'_*(H^{j+n-i-1})}}}\bigg) \displaybreak[0]\\
			&\hspace{-0.3 em} \overset{\rm  \ref{claim:help-sum}}{=}
				i'_*\bigg(\sum_{j=0}^r{\pi'^*(\sigma_r^\alpha.\sigma_j^\beta) \sum_{n=0}^{r}{c_{r-n}(\Omega_{P'|S}\otimes \ell)
											\Big(\pi'^*(\tau_{n+j,r})+ (-1)^{n-1} l^n . \pi'^*(\tau_{j,r})\Big)  }}\bigg)\displaybreak[0] \\
			&\hspace{-0.3 em} \overset{\rm \ref{rem:tau=delta}}{=}
				i'_*\bigg(\sum_{j=0}^r{\pi'^*(\sigma_r^\alpha.\sigma_j^\beta)   \underbrace{ \sum_{n=0}^{r}{c_{r-n}(\Omega_{P'|S}\otimes \ell)
											.\pi'^*(\tau_{n+j,r})  }}_{=:T_1(j)}	}\\
					&\hspace{15em}	+\pi'^*(\sigma_r^\alpha.\sigma_r^\beta) 		\underbrace{\sum_{n=0}^{r}{(-1)^{n-1} l^n.c_{r-n}(\Omega_{P'|S}\otimes \ell)}}_{=:T_2}
											   		\bigg).
	\end{align*}
	
	We need to develop more explicit expressions for $T_1(j)$ and $T_2$.
	\begin{claim}	
	For all $j=0, 1, \dotso, r$ the identity
	$$T_1(j)=(-1)^j l^j $$ holds.
	\end{claim}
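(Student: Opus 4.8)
The plan is to reduce $T_1(j)$ to a convolution of Chern and Segre classes of $F$ and then collapse it using the tautological identity $c(F)\cdot s(F)=1$. The first preparatory observation I would record is that the top $\tau$-coefficient is just a pushforward along $\pi$: applying $\pi_*$ to the defining relation $h^i=\sum_{k=0}^r \pi^*(\tau_{i,k})\,h^k$ of Definition \ref{def:tau_i,j} and using Lemma \ref{lem:pi_*(h^k)} (so that $\pi_*(h^k)=0$ for $k\le r-1$ and $\pi_*(h^r)=1_S$) gives $\tau_{i,r}=\pi_*(h^i)$. In particular $\tau_{n+j,r}=\pi_*(h^{n+j})$, and by the Segre-class computation carried out in the proof of Lemma \ref{lem:pi_*(h^k)} one has $\tau_{n+j,r}=s_{n+j-r}(F)$, where $s(F)=c(F)^{-1}$ denotes the total Segre class (so $s_k(F)=0$ for $k<0$ and $s_0(F)=1_S$).

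Next I would expand the other factor. Applying Lemma \ref{lemma:ci(Omega-tensor-O(1))} to the projective bundle $P'=\bP(F\dual)$ (where the relevant bundle is $F\dual$, and $(F\dual)\dual=F$) yields
$$c_{r-n}\big(\Omega_{P'|S}\otimes \ell\big)=\sum_{m=0}^{r-n}(-1)^m\, l^m\,\pi'^*\big(c_{r-n-m}(F)\big).$$
Substituting this together with $\pi'^*(\tau_{n+j,r})=\pi'^*\big(s_{n+j-r}(F)\big)$ into the definition of $T_1(j)$ turns it into a double sum whose generic summand is $(-1)^m l^m\,\pi'^*\big(c_{r-n-m}(F)\cdot s_{n+j-r}(F)\big)$. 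Writing $a:=r-n-m$ for the Chern degree, the Segre index becomes $n+j-r=(j-m)-a$, so the summand reads $(-1)^m l^m\,\pi'^*\big(c_a(F)\,s_{(j-m)-a}(F)\big)$.

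The key step is then to fix the power $m$ of $l$ and sum over $a$: the inner sum $\sum_{a\ge 0} c_a(F)\,s_{(j-m)-a}(F)$ is exactly the degree-$(j-m)$ component of $c(F)\cdot s(F)=1$, hence equals $\delta_{j-m,0}\cdot 1_S$. Only the term $m=j$ survives the summation over $m$, and it contributes $(-1)^j l^j$, which is precisely the claimed value.

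The main obstacle is more a matter of careful index bookkeeping than of genuine difficulty: I must check that passing from the constrained ranges of the original sum ($0\le n\le r$, equivalently $0\le a\le r-m$) to the \emph{free} convolution $\sum_{a\ge0}c_a(F)\,s_{(j-m)-a}(F)$ does not discard any nonzero contribution. This is exactly where the hypothesis $0\le j\le r$ enters: in every surviving term one has $a\le j-m\le r-m$, so the lower bound $n\ge 0$ is automatically respected and the full identity $c(F)s(F)=1$ may be invoked without boundary corrections. I would also take care to confirm the sign and indexing convention in the Segre identification, namely that $\pi_*(h^{r+k})=s_k(F)$ with $s(F)c(F)=1$, so that it matches the normalization $\pi_*(h^{r+1})=-c_1(F)$ of Lemma \ref{lem:pi_*(h^k)}.
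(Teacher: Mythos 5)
Your proof is correct, but it takes a genuinely different route from the paper's. The paper strengthens the statement to a two-parameter family $T_1(j,r-q)=(-1)^jl^j\sum_{m=0}^q(-1)^ml^m\pi'^*(c_{q-m}(F))$ and proves it by induction on $j$, using the recursion $\tau_{i+1,j}=\tau_{i,j-1}-c_{r+1-j}(F)\,\tau_{i,r}$ of Remark \ref{rem:tau_i,j2} to step from $j$ to $j+1$; the base case $j=0$ is exactly the expansion of $c_q(\Omega_{P'|S}\otimes\ell)$ via Lemma \ref{lemma:ci(Omega-tensor-O(1))}. You instead close the sum in one stroke: the observation $\tau_{i,r}=\pi_*(h^i)=s_{i-r}(F)$ (which follows by pushing forward the defining relation of Definition \ref{def:tau_i,j} and is consistent with the normalization $\pi_*(h^{r+1})=-c_1(F)$ of Lemma \ref{lem:pi_*(h^k)}) turns $T_1(j)$ into a convolution of Chern and Segre classes, and the identity $c(F)\cdot s(F)=1$ kills everything except the $m=j$ term. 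Your bookkeeping is sound: the only terms lost by truncating the range at $a\le r-m$ have negative Segre index and vanish, precisely because $j\le r$. What your approach buys is a conceptual explanation of why the answer is $(-1)^jl^j$ — it is the degree-$0$ component of $c(F)s(F)$ showing up — and it avoids introducing the auxiliary family $T_1(j,r-q)$; what the paper's induction buys is that it stays entirely within the $\tau_{i,j}$-formalism already set up for the rest of Section \ref{sec:multiplicativity} and reuses Remark \ref{rem:tau_i,j2} rather than invoking the Segre-class interpretation. Both arguments rest on the same two established lemmas (Lemmas \ref{lem:pi_*(h^k)} and \ref{lemma:ci(Omega-tensor-O(1))}), so the difference is one of organization rather than of input.
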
	
	
	\begin{proof}
	
		In fact, we will prove a slightly more general statement. 
		We prove that for any $j,q\in \{0,1,\dotso, r\}$ the following equality holds: 
		\begin{equation} \label{eq:aimofclaim}
			T_1(j,r-q):=\sum_{n=0}^{r}{c_{r-n}(\Omega_{P'|S}\otimes
                          \ell).\pi'^*(\tau_{n+j,r-q})  }
                        = (-1)^j l^j \Big( \sum_{m=0}^q {(-1)^m l^m .\pi'^*\big(c_{q-m}(F)\big)} \Big).
		\end{equation}
		
		Note that $T_1(j,r)=T_1(j)$. Therefore, the statement of the claim follows from \eqref{eq:aimofclaim} by setting $q=0$. 

                We prove \eqref{eq:aimofclaim} by induction on $j$.
		For $j=0$ one simply observes:
		\begin{equation*}	
			T_1(0,r-q)=\sum_{n=0}^{r}{c_{r-n}(\Omega_{P'|S}\otimes \ell).\pi'^*(\tau_{n,r-q})  }
			\overset{\rm \ref{rem:tau=delta}}{=}c_{q}(\Omega_{P'|S}\otimes \ell) 
			\overset{\rm \ref{lemma:ci(Omega-tensor-O(1))}}{=}\sum_{m=0}^q {(-1)^m l^m .\pi'^*\big(c_{q-m}(F)\big)}.
		\end{equation*}

                For every $j\leq r-1$:
		\begin{align*}
			T_1(j+1,r-q)
                        \hspace{0.3 em} & \hspace{-0.3 em}\overset{\rm \ref{rem:tau_i,j2}}{=}
                        T_1\big(j,r-(q+1)\big)- \pi'^*\big(c_{q+1}(F)\big). T_1(j,r).
		\end{align*}
                Then application of the induction hypothesis concludes the proof of the claim.
	\end{proof}
		
	Now determine $T_2$:
	\begin{align*}
	T_2&=\sum_{n=0}^{r}{(-1)^{n-1} l^n.c_{r-n}(\Omega_{P'|S}\otimes \ell)}
		\overset{\rm \ref{lemma:ci(Omega-tensor-O(1))}}{=} 
				\sum_{n=0}^{r}{(-1)^{n-1} l^n \sum_{m=0}^{r-n} {(-1)^m l^m .\pi'^*\big(c_{r-n-m}(F)\big)}}\\
				&=\sum_{m'=0}^{r} {(-1)^{m'-1}(m'+1). l^{m'}
                                  .\pi'^*\big(c_{r-m'}(F)\big)}.
	\end{align*}
	
	Using the expressions for $T_1(j)$ and $T_2$, one obtains:
	\begin{align} \label{eq:sternn_final}
		\sternn
		&= i'_*\bigg(\sum_{j=0}^r{\pi'^*(\sigma_r^\alpha.\sigma_j^\beta) .  (-1)^j l^j	}\notag\\
						&\hspace{6 em}+\pi'^*(\sigma_r^\alpha.\sigma_r^\beta) 	.\sum_{m'=0}^{r} {(-1)^{m'-1}(m'+1). l^{m'} .\pi'^*\big(c_{r-m'}(F)\big)}
											   		\bigg).
	\end{align}

\subsubsection*{Final expression for $\sternnn$}
 Recall that by \eqref{eq:sternnn1}
	\begin{equation*}
			 \sternnn= i'_*\big(\pi'^*(\sigma_r^\alpha.\sigma_r^\beta).c_r(\Omega_{P'|S})\big).
	\end{equation*}
	
	Lemma \ref{lem:c_i(Omega)} implies:
	\begin{align*}
			c_r(\Omega_{P'|S}) 
		  &=\sum_{m'=0}^r {(-1)^{m'} (m'+1). \pi'^*\big(c_{r-m'}(F)\big).l^{m'}}.
	\end{align*} 
	
	Hence $\sternnn$ can be expressed as:
	\begin{equation} \label{eq:sternnn_final}
		\sternnn = i'_*\bigg(\pi'^*(\sigma_r^\alpha.\sigma_r^\beta).\sum_{m'=0}^r {(-1)^{m'} (m'+1). \pi'^*\big(c_{r-m'}(F)\big).l^{m'}}\bigg).
	\end{equation}
	
\subsubsection*{Conclusion of the proof}
	Now combine the calculations of $\I + \II + \III + \stern$, $\sternn$, and $\sternnn$
	 given in   \eqref{eq:I+II+III+stern_final}, \eqref{eq:sternn_final}, and
         \eqref{eq:sternnn_final} respectively, to obtain:
	\begin{align*}
			\I + &\II + \III + \stern + \sternn + \sternnn
			 =i'_*\bigg(\sum_{k=0}^r{\sum_{j=0}^{r}
                           {\pi'^*(\sigma_k^\alpha\sigma_j^\beta.\tau_{k+j,r})}} \bigg) \overset{\rm \eqref{eq:RHS(aim)_final}}{=} i'_*\big(\pi'^*(\sigma_r^{\alpha.\beta})\big). \notag
	\end{align*}
	
	By \eqref{aim}, this concludes the (alternative) proof of Proposition
        \ref{prop:multiplicativity}.
	
\hfill \qedsymbol

\section{Link to derived categories and Grothendieck groups} \label{subsec:link to D}
This section relates Theorem \ref{thm:invertibility}
with a result of Namikawa on derived categories (\cite[Theorem 5.1]{Namikawa2003}). Finally, we discuss
the consequence of Theorem \ref{thm:invertibility} on the level of Grothendieck groups.
\bigskip

Denote by $\rD^b(X):=\rD^b\big({\bf Coh}(X)\big)$ the bounded derived category of an algebraic variety $X$. For
the notation and basic results on derived categories we refer to \cite{Huybrechts:FM-transforms}.
 
Consider a general Mukai flop $X\leftarrow \Xh \to X'$. Denote by $\iota_Z$ the natural inclusion of $Z$
into $X\times X'$ and by $q$ and
$q'$ the projections from $X\times X'$ to $X$ and $X'$ respectively.

In \cite{Namikawa2003} Namikawa studies the functor $\Phi_{\dO_Z}:=R(q'\circ\iota_Z)_*\circ L(q\circ\iota_Z)^*
: \rD^b(X) \to \rD^b(X')$,
which coincides with the Fourier--Mukai transform with kernel $\dO_Z$. He proves the following result,
which for elementary Mukai flops is due to Kawamata (see \cite[Corollary 5.7]{Kawamata02}):
\begin{theorem} [{\cite[Theorem 5.1]{Namikawa2003}}]\label{thm:Namikawa}
  For a general Mukai flop $X\leftarrow \Xh \to X'$, the functor $\Phi_{\dO_Z}$ is an equivalence of
  triangulated categories.
\end{theorem}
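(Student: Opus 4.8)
The plan is to verify that the Fourier--Mukai functor $\Phi:=\Phi_{\dO_Z}$ meets the orthogonality criterion of Bridgeland for an equivalence. Since $Z\subseteq X\times X'$ is a closed subspace of the smooth space $X\times X'$, the kernel $\dO_Z$ is a perfect complex, so $\Phi$ automatically has a left and a right adjoint, each a Fourier--Mukai transform whose kernel is $R\mathcal{H}om(\dO_Z,\dO_{X\times X'})$ twisted by $q^*\omega_X$, resp.\ $q'^*\omega_{X'}$, and shifted by $\dim X$. As $X$ and $X'$ are irreducible symplectic, both canonical bundles are trivial; hence the two adjoints coincide and, more importantly, it suffices to prove that $\Phi$ is fully faithful: by Bridgeland's criterion (see e.g.\ \cite{Huybrechts:FM-transforms}) a fully faithful Fourier--Mukai functor between smooth projective varieties with trivial canonical bundle is automatically an equivalence.

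To establish full faithfulness I would use the spanning class of skyscraper sheaves $\{\dO_x\mid x\in X\}$ and check, for all closed points $x,y\in X$, that $\Hom_{\rD^b(X')}^i(\Phi\dO_x,\Phi\dO_y)=0$ for all $i$ whenever $x\neq y$, and that for $x=y$ one has $\Hom^0=\bC$ together with $\Hom^i=0$ for $i<0$. The first step is to compute $\Phi\dO_x=Rq'_*\big(\dO_Z\otimes^{L}q^*\dO_x\big)$. Away from the flopping locus $\phi'\circ\phi^{-1}$ is an isomorphism and the fibre of $Z$ over $x\notin P$ is a single reduced point $x'\in X'\setminus P'$, so $\Phi\dO_x\iso\dO_{x'}$ and the conditions hold trivially; the entire difficulty is concentrated on $P$.

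For $x\in P$ the decisive input is the reducible structure $Z=\Xh\cup(P\times_S P')$ of \eqref{eq:Z=}. Over such an $x$, lying in a fibre $P_s\iso\bP^r$, the main component contributes the $\bP^{r-1}$ obtained as $\eta'(\eta^{-1}(x))\subseteq P'$ from the contraction $\phi'$, while the spurious component $P\times_S P'$ contributes the full fibre $P'_s\iso\bP^r$. I would localise \'etale-locally (or analytically) over $S$ to the standard cotangent-bundle model of the flop, where $\Phi\dO_x$ becomes an explicit complex supported on $P'_s$ that can be written down using the maps $\phi,\phi',\eta,\eta'$ of Section \ref{ssec:Mukai flops} and the projection formulae recorded in Lemmas \ref{lem:Mukai-flop2} and \ref{lem:eta_*(H^k)}. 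The mutual $\Hom$'s between the resulting complexes then reduce to a Beilinson/Koszul-type computation on projective space, which one carries out and finally patches over $S$.

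The main obstacle is exactly this local computation on $P$: one must show that the contribution of the extra component $P\times_S P'$ cancels precisely the higher cohomology created by contracting the $\bP^{r-1}$-fibres of $\phi$, so that for distinct points $x,y$ in a common fibre $P_s$ --- whose naive images (two hyperplanes in $\bP^r$ meeting in a $\bP^{r-2}$) would produce nonzero higher $\Hom$'s --- the corrected images $\Phi\dO_x,\Phi\dO_y$ become genuinely orthogonal, while the self-$\Hom$ acquires the expected shape. This is why the reducible fibre product $Z$, rather than the naive kernel $\dO_{\Xh}$, must be used. Once the orthogonality relations are verified in the local model, full faithfulness follows from Bridgeland's criterion, and the triviality of $\omega_{X'}$ upgrades it to the asserted equivalence of triangulated categories.
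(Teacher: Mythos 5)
This statement is not proved in the paper at all: it is quoted verbatim from Namikawa (\cite[Theorem 5.1]{Namikawa2003}), with the remark that the elementary case is due to Kawamata. So there is no internal proof to compare against, and your proposal has to stand on its own as a proof of Namikawa's theorem. As such it contains a genuine gap. The reduction you set up is standard and correct in outline (the kernel $\dO_Z$ is perfect because $X\times X'$ is smooth, the adjoint kernels are as you describe, triviality of $\omega_X$ and $\omega_{X'}$ together with $\dim X=\dim X'$ reduces the equivalence to full faithfulness, and full faithfulness can be tested on skyscraper sheaves via the Bondal--Orlov/Bridgeland criterion --- though note you also need $\Hom^i(\Phi\dO_x,\Phi\dO_x)=0$ for $i>\dim X$, not only for $i<0$). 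But everything after that is a description of what one would like to be true rather than an argument. The entire content of the theorem is concentrated in the paragraph you label ``the main obstacle'': showing that the component $P\times_S P'$ of $Z$ corrects the failure of orthogonality caused by contracting the $\bP^{r-1}$-fibres. You assert that this cancellation happens ``precisely'' but do not exhibit the complex $\Phi\dO_x$ for $x\in P$, and this is exactly the nontrivial computation. It cannot be taken on faith: Namikawa's own earlier work shows that the naive kernel $\dO_{\Xh}$ fails to give an equivalence, so the phenomenon is delicate and the sign of the correction matters.

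Two further technical points are glossed over in a way that would derail the computation if not addressed. First, $Z$ is reducible and singular along $\Xh\cap(P\times_S P')\iso W$, so $\dO_Z\otimes^{L}q^*\dO_x$ has higher Tor's coming both from the non-flatness of $q|_{\Xh}$ over $P$ and from the gluing of the two components; one must work with the exact triangle relating $\dO_Z$, $\dO_{\Xh}\oplus\dO_{P\times_S P'}$ and $\dO_W$, not with the set-theoretic fibres of $Z$ over $x$. Second, reducing to ``the standard cotangent-bundle model'' \'etale-locally over $S$ is not automatic for statements about global $\Hom$'s in $\rD^b(X')$; you would need to argue that the relevant $\mathcal{E}xt$ sheaves are supported on $P'$ and compute their hypercohomology, which reintroduces the global geometry of the bundle $F$. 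Until the explicit form of $\Phi\dO_x$ for $x\in P$ and the resulting $\Hom$-computation are carried out, the proposal is a plausible plan, not a proof.
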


For arbitrary birational irreducible symplectic varieties $X$ and $X'$, the natural maps \linebreak[0]
$\rD^b(X) \overset{[\ ]}{\too}\linebreak[0] K(X)\overset{v}{\too}\CH_\bQ(X)$ (where  $v$
associates to a coherent sheaf its Mukai vector $v(F):=\ch(F).\sqrt{\td(X)}\in \CH_\bQ(X)$) give rise to
the following commutative diagram:

\begin{equation} \label{diag:FM-diagram}
  \begin{tikzcd}
    \rD^b(X)\dar[swap]{[\ ]} \rar{\Phi_{\dO_Z}}&\rD^b(X')\dar{[\ ]}\\
    K(X)\dar[swap]{v}  \rar{\Phi^K_{[\dO_Z]}}     &K(X') \dar{v}\\
    \CH_\bQ(X) \rar{\Phi^{\CH}_{v(\dO_Z)}} &\CH_\bQ(X').
  \end{tikzcd}
\end{equation}
Note that Theorem \ref{thm:Namikawa} implies bijectivity of the map $\Phi^{\CH}_{v(\dO_Z)}$ in the case
of general Mukai flops.

The central result of this section is:
\begin{proposition} \label{prop:[Z]=v(Oz)} Let $X$ and $X'$ be birational irreducible symplectic
  varieties. Then the classes
  $v(\dO_Z)$ and $[Z] \in \CH_\bQ(X\times X')$ coincide. Consequently, the associated correspondences $
  \Phi^{\CH}_{v(\dO_Z)} =[Z]_*$ are equal. In particular, $ \Phi^{\CH}_{v(\dO_Z)}$ is multiplicative.
\end{proposition}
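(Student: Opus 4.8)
The plan is to reduce the entire statement to the single cycle-level identity $v(\dO_Z)=[Z]$ in $\CH_\bQ(X\times X')$. Granting this, everything else is formal: $\Phi^{\CH}_{v(\dO_Z)}$ is by construction the correspondence with kernel $v(\dO_Z)$, while $[Z]_*$ is the correspondence with kernel $[Z]$ (Definition \ref{def:[Z]_*}), so the two maps coincide as soon as their kernels do; and the multiplicativity of $\Phi^{\CH}_{v(\dO_Z)}$ then follows at once from Theorem \ref{thm:invertibility}, which supplies the multiplicativity of $[Z]_*$. Thus the whole proposition rests on computing the Mukai vector of $\dO_Z$.

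First I would treat the generic fibre. Over $\eta$ the cycle $\Gamma\subseteq\cX_\eta\times_{k(\eta)}\cX'_\eta$ is the graph of the isomorphism $\psi\colon\cX_\eta\iso\cX'_\eta$, hence smooth, and its structure sheaf is a genuine sheaf to which I can apply Grothendieck--Riemann--Roch for the closed embedding $\iota_\Gamma$. A short projection-formula manipulation gives
$$v(\dO_\Gamma)=\iota_{\Gamma*}\Big(\sqrt{\td(T_\Gamma)/\td(N_\Gamma)}\Big),$$
where $N_\Gamma$ denotes the normal bundle of $\Gamma$. The key observation is that, since $\Gamma$ is the graph of an isomorphism, the inclusion $T_\Gamma\hookrightarrow\big(T_{\cX_\eta}\oplus\psi^*T_{\cX'_\eta}\big)|_\Gamma$ is the diagonal, so $N_\Gamma\cong T_\Gamma$ and the Todd quotient is trivial. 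Hence $v(\dO_\Gamma)=\iota_{\Gamma*}(1)=[\Gamma]$.

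Finally I would globalize and specialize, in complete analogy with the proof of invertibility of $[Z]_*$. Working on the smooth family $\cX\times_T\cX'\to T$, I consider the relative Mukai class $\tilde v:=\ch(\dO_{\Gammabar})\,.\,\sqrt{\td(T_{\cX\times_T\cX'/T})}$, where $T_{\cX\times_T\cX'/T}$ is the relative tangent sheaf, whose restriction to each fibre is the tangent sheaf of that fibre. Since $\Gammabar$ is integral and dominates the smooth curve $T$, its structure sheaf is flat over $T$; therefore $r_\eta(\ch(\dO_{\Gammabar}))=\ch(\dO_\Gamma)$ and $s_0(\ch(\dO_{\Gammabar}))=\ch(\dO_Z)$ with no higher $\mathrm{Tor}$ contributions, while $\sqrt{\td}$ of the relative tangent sheaf restricts to $\sqrt{\td}$ of the respective fibres. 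Compatibility of the specialization maps with Chern classes and products (Section \ref{sec:specialization}) then yields $r_\eta(\tilde v)=v(\dO_\Gamma)=[\Gamma]$ and $s_0(\tilde v)=v(\dO_Z)$, and the commutative triangle $s_0=\sigma\circ r_\eta$ together with the definition of $\sigma$ as closure-then-restriction gives
$$v(\dO_Z)=s_0(\tilde v)=\sigma(r_\eta(\tilde v))=\sigma([\Gamma])=[\Gammabar_0]=[Z].$$
I expect the main obstacle to be precisely this flatness step and the attendant claim that $s_0(\ch(\dO_{\Gammabar}))$ equals $\ch(\dO_Z)$ on the nose: here one must confirm that the scheme-theoretic special fibre of $\Gammabar$ realises the cycle $[Z]$ and that specialising the Chern character of a $T$-flat family introduces no correction terms.
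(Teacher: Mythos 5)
Your proposal is correct and follows essentially the same route as the paper: reduce to the cycle identity $v(\dO_Z)=[Z]$, prove $v(\dO_\Gamma)=[\Gamma]$ on the generic fibre by Grothendieck--Riemann--Roch using that the normal bundle of the graph of an isomorphism is its tangent bundle, and then transport this to the special fibre via the specialization triangle $s_0=\sigma\circ r_\eta$ applied to the relative Mukai class $\ch(\dO_{\Gammabar}).\sqrt{\td(T_{\cX\times_T\cX'|T})}$. The flatness point you flag (no higher Tor corrections when restricting $\ch(\dO_{\Gammabar})$ to the fibres) is indeed the only place requiring care, and your justification via $\Gammabar$ being integral and dominating the smooth curve $T$ is the right one; the paper passes over it silently.
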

\begin{proof}
Let $\cX$ and $\cX'$ be families as in Proposition \ref{prop:families}.
Consider the following two cycles in $\CH_\bQ(\cX\times_T\cX')$:
\begin{enumerate}[(1)]
\item The cycle $\alpha:=[\Gammabar]$, and
\item the cycle $\beta:=\ch(\dO_{\Gammabar}).\sqrt{\td(T_{\cX\times_T\cX'|T})}$.
\end{enumerate}

As a consequence of Proposition \ref{prop:families}, the restriction of $\alpha$ to the special fibre is 
$s_0(\alpha)=[Z]$. On the other hand, compute:   \begin{align*}
    s_0(\beta)
    &= s_0 \Big( \ch(\dO_{\Gammabar}).\sqrt{\td(T_{\cX\times_T\cX'|T})}\Big)
    = \ch(\restr{\dO_{\Gammabar}}{\cX_0}).\sqrt{\td(\restr{T_{\cX\times_T\cX'|T}}{\cX_0})}\\
    &= \ch(\dO_Z).\sqrt{\td(T_{X\times X'})}
    =v(\dO_Z).
  \end{align*}

  Therefore, applying the specialization map of Section \ref{sec:specialization}, it is enough to show that
  the restrictions to the general fibre, $\alpha_\eta:= r_\eta(\alpha)$ and $\beta_\eta:=r_\eta(\beta)$, coincide.

  Consider the graph $\Gamma$ of the isomorphism $\cX_\eta \iso \cX'_\eta $, which is the restriction of
  $\cX_{T\setminus \{0\}} \iso \cX'_{T\setminus \{0\}} $. Then
  $\alpha_\eta=r_\eta([\Gammabar])=[\Gammabar_\eta]=[\Gamma]$.
  Furthermore, one computes:
  \begin{align*}
    \beta_\eta=r_\eta\big(\ch(\dO_{\Gammabar}).\sqrt{\td(T_{\cX\times_T\cX'|T})}\big) 
    =\ch(\dO_{\Gamma}).\sqrt{\td(T_{\cX_\eta\times_{k(\eta)}\cX'_\eta|k(\eta)})}.
  \end{align*}

  Let $i_\Gamma: \Gamma \to \cX_\eta\times_{k(\eta)}\cX'_\eta$ denote the natural inclusion.
  Applying the Grothendieck--Riemann--Roch theorem (GRR) (see \cite[Theorem 15.2]{Fulton}) one then observes:
  \begin{align*}
    \beta_\eta
    &=\ch({i_\Gamma}_*\dO_{\Gamma}).\td(T_{\cX_\eta\times_{k(\eta)}\cX'_\eta|k(\eta)})
    .\frac{1}{\sqrt{\td(T_{\cX_\eta\times_{k(\eta)}\cX'_\eta|k(\eta)})}}\\
    &\hspace{-0.8 em}\overset{\rm (GRR)}{=}{i_\Gamma}_*\Big(
    \underbrace{\ch(\dO_{\Gamma})}_{=1}
    .\td(T_{\Gamma|k(\eta)})
    .\frac{1}{\sqrt{{i_\Gamma}^*\td(\pr_{\cX_\eta}^*T_{\cX_\eta|k(\eta)}\oplus \pr_{\cX'_\eta}^*T_{\cX'_\eta|k(\eta)})}}\Big)\\
    &={i_\Gamma}_*\Big(
    \td(T_{\Gamma|k(\eta)})
    .\frac{1}{\sqrt{\td(T_{\Gamma|k(\eta)}).\td(T_{\Gamma|k(\eta)})}}\Big)\\
    &={i_\Gamma}_*(1_{\Gamma})=[\Gamma]\\
    &=\alpha_{\eta}
    .
  \end{align*}
  This concludes the proof.
\end{proof}
  Since the Chern character $\ch:K(X)\otimes \bQ
  \to \CH_\bQ(X)$ is an isomorphism of rings (see \cite[Theorem 11.6]{Manin}), one can conclude from
  \eqref{diag:FM-diagram}, Corollary \ref{cor:[Z]_*(td)=td}, Theorem \ref{thm:invertibility} and Proposition \ref{prop:[Z]=v(Oz)} that:
  \begin{corollary}\label{lem:mult_on_K-groups}
    On the level of rational Grothendieck rings
    the map $\Phi^K_{v(\dO_Z)}$ is an isomorphism and in particular multiplicative.
    \hfill  \qedsymbol
  \end{corollary}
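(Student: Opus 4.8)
The plan is to deduce the statement from the commutative diagram \eqref{diag:FM-diagram}, transporting the graded ring isomorphism $[Z]_*$ through its vertical maps. Tensoring Theorem \ref{thm:invertibility} with $\bQ$, the map $[Z]_*\colon \CH_\bQ(X)\to\CH_\bQ(X')$ is an isomorphism of graded rings. The lower square of \eqref{diag:FM-diagram}, combined with the equality $\Phi^{\CH}_{v(\dO_Z)}=[Z]_*$ from Proposition \ref{prop:[Z]=v(Oz)}, yields the relation
\begin{equation*}
  v_{X'}\circ \Phi^K_{[\dO_Z]} = [Z]_*\circ v_X,
\end{equation*}
where $v_X,v_{X'}$ denote the Mukai vector maps for $X$ and $X'$. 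Since $\ch$ is an isomorphism and multiplication by the invertible element $\sqrt{\td(\,\cdot\,)}$ is bijective, both $v_X$ and $v_{X'}$ are additive isomorphisms; together with bijectivity of $[Z]_*$ this already shows that $\Phi^K_{[\dO_Z]}$ is an additive isomorphism. The remaining task is multiplicativity.

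The main obstacle is that the Mukai vector $v_X(\xi)=\ch(\xi).\sqrt{\td(X)}$ is not a ring homomorphism, so the multiplicativity of $[Z]_*$ cannot be transported directly through the displayed relation. To bypass this I would pass from $v$ to the Chern character, which by \cite[Theorem 11.6]{Manin} is an isomorphism of rings. Substituting $v_X=\ch(-).\sqrt{\td(X)}$ into the relation and using that $[Z]_*$ is multiplicative gives, for every $\xi$,
\begin{equation*}
  \ch\big(\Phi^K_{[\dO_Z]}(\xi)\big).\sqrt{\td(X')}
  = [Z]_*\big(\ch(\xi)\big).[Z]_*\big(\sqrt{\td(X)}\big),
\end{equation*}
so the whole argument reduces to the identity $[Z]_*\big(\sqrt{\td(X)}\big)=\sqrt{\td(X')}$.

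This identity I would establish by a uniqueness argument for square roots. Since $[Z]_*$ is a graded ring homomorphism with $[Z]_*(1_X)=1_{X'}$, the element $[Z]_*\big(\sqrt{\td(X)}\big)$ has degree-zero component $1_{X'}$, and its square is $[Z]_*\big(\td(X)\big)=\td(X')$ by Corollary \ref{cor:[Z]_*(td)=td}. As the augmentation ideal $\bigoplus_{k\ge 1}\CH^k_\bQ(X')$ is nilpotent, an element of $\CH_\bQ(X')$ with degree-zero component $1_{X'}$ admits a unique square root whose degree-zero component is $1_{X'}$; hence $[Z]_*\big(\sqrt{\td(X)}\big)=\sqrt{\td(X')}$. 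Cancelling the invertible factor $\sqrt{\td(X')}$ in the previous display then gives $\ch\circ \Phi^K_{[\dO_Z]} = [Z]_*\circ \ch$, so that
\begin{equation*}
  \Phi^K_{[\dO_Z]} = \ch_{X'}^{-1}\circ [Z]_*\circ \ch_X
\end{equation*}
is a composite of isomorphisms of rings. This proves that $\Phi^K_{[\dO_Z]}$ is an isomorphism of rational Grothendieck rings, in particular multiplicative, as claimed.
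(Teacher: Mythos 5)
Your proposal is correct and follows essentially the same route as the paper, which derives the corollary from the commutative diagram \eqref{diag:FM-diagram}, the ring isomorphism $\ch$, Corollary \ref{cor:[Z]_*(td)=td}, Theorem \ref{thm:invertibility} and Proposition \ref{prop:[Z]=v(Oz)} without writing out the details. Your square-root uniqueness argument correctly fills in the one implicit step, namely upgrading $[Z]_*\big(\td(X)\big)=\td(X')$ to $[Z]_*\big(\sqrt{\td(X)}\big)=\sqrt{\td(X')}$ (which could alternatively be read off from Lemma \ref{lem:[Z]_*(ch(T))=ch(T)} together with multiplicativity).
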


  Consider a general Mukai flop $X\leftarrow \Xh \to X'$. Combining Theorem \ref{thm:Namikawa} with Balmer's
  result in \cite{Balmer} shows that the Fourier--Mukai transform $\Phi_{\dO_Z}$ is not compatible with
  the derived tensor product. Therefore, even in this special case, there is a priori no reason on the
  level of derived categories that $\Phi^K_{[\dO_Z]}$ or $\Phi^{\CH}_{v(\dO_Z)}$ should respect the ring
  structure.

\bibliographystyle{alpha}
\bibliography{Literatur}

\end{document}